\renewcommand{\phi}{\varphi}
\renewcommand{\le}{\leqslant}
\renewcommand{\ge}{\geqslant}
\renewcommand{\mid}{:}
\newcommand{\PP}{\mathbb{P}}
\newcommand{\EE}{\mathbb{E}}
\newcommand{\R}{\mathbb{R}}
\newcommand{\aaa}{\mathcal{A}}
\newcommand{\bb}{\mathcal{B}}
\newcommand{\ff}{\mathcal{F}}
\newcommand{\hh}{\mathcal{H}}
\newcommand{\cv}{\mathbf{c}}
\newcommand{\support}{\operatorname{supp}}
\newcommand{\probabilisticCenter}{C^*}
\newcommand{\mSet}{\mathbf{m}}
\newcommand{\randomSubset}{\mathbf{X}}
\newcommand{\indicator}{\operatorname{I}}
\newcommand{\targetFunction}{s^*(n, \ell, \mSet)}
\newcommand{\sDomain}{\mathcal{S}}
\newcommand{\sVector}{\mathbf{s}}
\newcommand{\saveSet}[1]{\overset{\bullet}{#1}}
\newcommand{\excludeSet}[1]{\overset{\circ}{#1}}
\newcommand{\notContainSet}[1]{\overset{\diamond}{#1}}
\newtheorem{theorem}{Theorem}
\newtheorem{problem}{Problem}
\newtheorem{claim}[theorem]{Claim}
\newtheorem{lemma}[theorem]{Lemma}
\newtheorem{corollary}[theorem]{Corollary}
\newtheorem{proposition}[theorem]{Proposition}
\newtheorem{definition}{Definition}
\title{Octopuses in the Boolean cube: families with pairwise small intersections, part II}
\author{
    Andrey Kupavskii\footnote{G-SCOP, CNRS, University Grenoble-Alpes, France and Moscow Institute of Physics and Technology, Russia and Sa; Email: {\tt kupavskii@ya.ru}},
    Fedor Noskov\footnote{Moscow Institute of Physics and Technology, HSE University, Russian Federation, Email: {\tt noskov.fedor.99@mail.ru}},
}
\begin{document}

\maketitle

\begin{abstract}
    The problem we consider originally arises from 2-level polytope theory. This class of polytopes generalizes a number of other polytope families. One of the important questions in this filed can be formulated as follows: is it true for a $d$-dimensional 2-level polytope that the product of the number of its vertices and the number of its $d-1$ dimensional facets is bounded by $d2^{d - 1}$? Recently, Kupavskii and Weltge~\cite{Kupavskii2020} settled this question in positive. A key element in their proof is a  more general result for families of vectors in $\R^d$ such that the scalar product between any two vectors from different families is either $0$ or $1$.

    Peter Frankl noted that, when restricted to the Boolean cube, the solution boils down to an elegant application of the Harris--Kleitman correlation inequality. Meanwhile, this problem becomes much more sophisticated when we consider several families.
    
    Let $\ff_1, \ldots, \ff_\ell$ be families of subsets of $\{1, \ldots, n\}$. We suppose that for distinct $k, k'$ and arbitrary $F_1 \in \ff_{k}, F_2 \in \ff_{k'}$ we have $|F_1 \cap F_2|\le m.$  We are interested in the maximal value of $|\ff_1|\ldots |\ff_\ell|$ and the structure of the extremal example.
    
    In the previous paper on the topic, the authors found  the asymptotics of this product for constant $\ell$ and~$m$  as $n$ tends to infinity. However, the possible structure of the families from the extremal example turned out to be very complicated. In this paper, we  obtain a strong structural result for the extremal families.
    
    
\end{abstract}

\section{Introduction}

A polytope $P \subset \R^d$ is called \textit{2-level} if for each facet $F$, all vertices of $P$ are contained in either a hyperplane $H$ defined by $F$ or in another hyperplane $H'$ parallel to $F$. Several important polytopes are 2-level, e.g. hypercubes, cross-polytopes, simplices; 2-level polytopes include a number of more sophisticated families like Hanner polytopes, Birkhoff polytopes, Hansen polytopes and others~\cite{2levelPolytopesReference}. They arise in such areas of mathematics as the semidefinite programming, communication complexity and polyhedral combinatorics.

A number of authors studied combinatorial structure of 2-level polytopes~\cite{bohn2015enumeration, fiorini2016two, aprile20182, bohn2019enumeration, Kupavskii2020}. Among other problems, they considered a beautiful conjecture about tradeoff between the number of vertices $f_0(P)$ and the number of $d-1$-dimensional facets $f_{d-1}(P)$ of a 2-level polytope $P \subset \R^d$. The authors of \cite{bohn2015enumeration} conjectured that  that $f_0(P) f_{d-1}(P) \le d 2^{d + 1}$ for all $d$ for any $2$-level polytope. Recently, Kupavskii and Weltge settled this conjecture~\cite{Kupavskii2020}. Additionally, they proved a somewhat more general result (see \cite{Kupavskii2020} for the relation between the two questions):
\begin{theorem}[\cite{Kupavskii2020}]
\label{theorem: kup-weltge}
Let $\aaa$, $\bb$ be families of vectors that both linearly span $\R^n$. Additionally, assume that $\langle a,b\rangle\in\{0, 1\}$ holds for all $a\in\aaa$, $b\in\bb$. Then we have $|\aaa|\cdot|\bb| \le (n + 1) 2^n$.
\end{theorem}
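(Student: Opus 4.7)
The plan is to change coordinates so that $\bb$ becomes a subset of the Boolean cube, handle the ``saturated'' case directly, and then attempt an induction on $n$ for the remaining cases.

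Since $\aaa$ linearly spans $\R^n$, I pick a basis $a_1, \dots, a_n \in \aaa$ and apply the linear isomorphism $\chi \colon x \mapsto (\langle a_1, x \rangle, \dots, \langle a_n, x \rangle)$. Under $\chi$, every $b \in \bb$ lands in $\{0,1\}^n$ by the inner-product condition with the basis, each $a_i$ is sent to the standard basis vector $e_i$, and writing every $a \in \aaa$ in the basis $\{a_1,\dots,a_n\}$ yields a vector $\tilde a$ with $\langle \tilde a, \chi(b) \rangle = \langle a, b \rangle$. So I may assume, without loss of generality, that $\bb \subseteq \{0,1\}^n$ still spans $\R^n$, that $\{e_1, \dots, e_n\} \subseteq \aaa$, and that cardinalities and inner products are preserved.

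In the saturated case $\bb = \{0,1\}^n$, testing an arbitrary $a \in \aaa$ against $e_i \in \bb$ gives $a_i \in \{0,1\}$, and testing against $e_i + e_j \in \bb$ gives $a_i + a_j \in \{0,1\}$. Hence every $a \in \aaa$ has $|\support(a)| \le 1$, so $\aaa \subseteq \{0, e_1, \dots, e_n\}$, $|\aaa| \le n+1$, and $|\aaa| \cdot |\bb| \le (n+1) 2^n$. This already identifies the two extremal configurations $(\aaa,\bb) = (\{0, e_1,\dots,e_n\}, \{0,1\}^n)$ and its swap.

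For general $\bb$, I would induct on $n$. Fix coordinate $n$, split $\bb = \bb_0 \sqcup \bb_1$ by the $n$-th coordinate, and split $\aaa$ according to the value of $\langle a, b^\star \rangle \in \{0,1\}$ for a carefully chosen $b^\star \in \bb_1$. Each of the four resulting quadrants becomes, after projection onto the first $n-1$ coordinates and a suitable affine shift (to undo the contribution of $a_n b_n$), an $(n-1)$-dimensional instance of the same problem, to which the inductive hypothesis applies. The main obstacle, and the technical core of the argument, is the combination step: a naive application of the inductive bound $n \cdot 2^{n-1}$ to each of the four quadrants produces a total of $2n \cdot 2^n$, which is far worse than the target $(n+1)2^n$. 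To save the missing factor, I expect one has to (i) pick $b^\star$ of minimum Hamming weight, so that $\aaa$ is forced into a near-Boolean structure on at least one side of the split and one inductive bound can be sharpened, and (ii) engineer an explicit injection from $\aaa \times \bb$ into a set of size $(n+1)2^n$ that mirrors the decomposition $(n+1)2^n = n \cdot 2^n + 2^n$, charging $n \cdot 2^n$ to the inductive sub-problem and the extra $2^n$ to the two halves of $\aaa$. Carrying out this balancing is what makes the proof nontrivial.
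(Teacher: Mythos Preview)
This theorem is not proved in the present paper; it is quoted from \cite{Kupavskii2020} as background. The only proof-related remark here is that the restriction to the Boolean cube (both $\aaa,\bb\subset\{0,1\}^n$) follows from the Harris--Kleitman correlation inequality, with details deferred to \cite{KupaNos1}. So there is no proof in this paper to compare against.

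On the substance of your attempt: the coordinate change sending $\bb$ into $\{0,1\}^n$ is correct and standard (the paper alludes to exactly this reduction), and your treatment of the saturated case $\bb=\{0,1\}^n$ is fine. The general case, however, is not a proof --- as you yourself acknowledge, the naive induction loses a factor of two and you do not supply the mechanism that recovers it. Two further structural problems with the scheme deserve mention. First, after splitting and projecting to the first $n-1$ coordinates, the pieces need not span $\R^{n-1}$, so the inductive hypothesis as stated is unavailable; any working version must track the dimension actually spanned by the families rather than the ambient $n$. Second, the ``four quadrants'' do not all become instances of the same problem: for $b\in\bb_1$ one has $\langle\pi(a),\pi(b)\rangle=\langle a,b\rangle-a_n$, and since your coordinate change forces only $\bb$, not $\aaa$, into the cube, $a_n$ is an a priori arbitrary real number. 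Shifting $\bb_1$ by your chosen $b^\star$ gives $\langle\pi(a),\pi(b-b^\star)\rangle=\langle a,b\rangle-\langle a,b^\star\rangle\in\{-1,0,1\}$, a three-value condition rather than the original two-value one, so the recursion does not close on the stated hypothesis. A bare coordinate-split induction of this kind does not appear to go through without substantial additional ideas.
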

The bound is tight since one can take $\aaa = \{\textbf 0, e_1, \ldots, e_n\}$ and $\bb = \{0, 1\}^n$.

Restricted to the Boolean cube $\{0, 1\}^n$, Theorem~\ref{theorem: kup-weltge} is an elegant consequence of the Harris--Kleitman inequality, as  discussed in our first paper on the topic~\cite{KupaNos1}. (As a sidenote, let us mention that, while via an affine transformation we can reduce the general case of the problem to the case when, say, family $\aaa$ is in $\{0,1\}^n$, we cannot in general transform both families into subsets of $\{0,1\}^n$.) At the same time, the problem  becomes much more sophisticated when we consider products of several families of sets in the Boolean cube. This is the problem that we address in this  paper. In order to formulate the question precisely, we need to introduce some notation.

\subsection{Notation}

In this work, we deal with families (or collections) of finite sets.
Put  $[n] = \{1, \ldots, n\}$, and, more generally, $[a, b] = \{a, a + 1, \ldots, b\}$. Given a set $X$, let $2^{X}$ and $\binom{X}{k}$ ( $\binom{X}{\le k}$) stand for the set of all subsets of $X$ and set of all $k$-element ($\le k$-element) subsets of $X$. 
Obviously, $\binom{X}{\le k} = \bigsqcup_{t = 0}^k \binom{X}{t}$, where $\binom{X}{0} = \{\varnothing \}$. We also denote $\binom{n}{\le m} := \left| \binom{[n]}{\le m} \right| = \sum_{t = 0}^m \binom{n}{t}$.

There is a trivial bijection between $2^{[n]}$ and the Boolean cube $\{0, 1\}^{n}$. Given a set $X$, we consider its characteristic vector $\cv^X$ whose $i$-th coordinate $\cv^X_i$ equals 0 if $i \not \in X$ and $1$ otherwise.

For two families $\ff_1$ and $\ff_2$ we denote 
\begin{align*}
    \ff_1\wedge \ff_2&:=\{A_1\cap A_2: A_i\in \ff_i\},\\
    \ff_1\vee \ff_2&:=\{A_1\cup A_2: A_i\in \ff_i\}.
\end{align*}
We assume that operations $\vee, \wedge$ have higher priority than $\cup$ and $\cap$, so, for example, $\ff_1 \vee \ff_2 \cup \ff_1 \wedge \ff_2 = (\ff_1 \vee \ff_2) \cup (\ff_1 \wedge \ff_2)$.

This work is dedicated to families with the <<$\mSet$-\textit{overlapping property}>>, which is defined below.
\begin{definition}
Let $\mSet = (\mSet_S)_{S \in \binom{[\ell]}{2}}$ be a vector of non-negative integer numbers indexed by unordered pairs $\{k, k'\} \in {[\ell]\choose 2}$. For simplicity we suppress brackets in $\mSet_{\{k, k'\}}$ and assume that $\mSet_{k, k'}$, $\mSet_{k', k}$, and $\mSet_{\{k, k'\}}$ identify the same entry. Families $\ff_1, \ldots, \ff_{\ell}\subset 2^{[n]}$ {\em satisfy an $\mSet$-overlapping property} if for any distinct $k_1, k_2 \in [\ell]$ and any sets $F_1 \in \ff_{k_1}$, $F_2 \in \ff_{k_2}$
\begin{align*}
    |F_1 \cap F_2| \le \mSet_{k_1, k_2}.
\end{align*}
In other words, $\ff_{k_1}\wedge \ff_{k_2}\subset {[n]\choose \le m_{k_1,k_2}}$. If all of $\mSet_{k_1, k_2}$ are equal to $m$, the property is referred to as <<$m$-overlapping>>.
If $m = 1$ then the property is referred to as <<overlapping>>.
\end{definition}
We use the standard asymptotic notation, i.e., $f = o(g), f = \Omega(g)$ etc. for some functions $f,g$ is with respect to $n\to \infty.$ Additionally, we say $f \gg g$ is for any constant $C$ there exists $n_0$ such that for all $n \ge n_0$ we have $f(n) > C g(n)$.

We also employ the following standard  notation for restrictions of a family $\ff$. Given sets $A\subset B,$ denote
\begin{align*}
    \ff|_B&:=\{F\cap B: F\in \ff\},\\
    \ff(A)&:=\{F\setminus A: A\subset F, F\in \ff\},\\
    \ff(\bar A)&:=\{F: A\cap F = \emptyset, F\in \ff\},\\
    \ff(A,B)&:=\{F \setminus B : F \in \ff \text{ and } F\cap B = A\}.
\end{align*}
Dealing with singletons, we suppress brackets for simplicity, i.e. $\ff(x) = \ff(\{x\})$ and $\ff(\overline{x})=\ff \left (\overline{\{x\}} \right )$.

\subsection{Problem statement and results}

In this work, we study the following problem.
\begin{problem}
\label{problem: main}
Let $n, \ell$ be positive integers, $\mSet$ be a vector of $\binom{\ell}{2}$ non-negative integers and $\ff_1, \ldots, \ff_\ell \subset 2^{[n]}$ be families with the $\mSet$-overlapping property. What is the maximal value $s^*(n, \ell, \mSet)$ of the product $|\ff_1|\cdot\ldots\cdot |\ff_\ell|$? What is the structure of the extremal example?
\end{problem}

If all coordinates of $\mSet$ are equal to $m$, we denote $s^*(n, \ell, m) := s^*(n, \ell, \mSet)$.

It is easy to see that $s^*(n, \ell, 0) = 2^n$: indeed, supports of sets in distinct families are disjoint. Recently, Aprile, Cevallos, and Faenza \cite{aprile20182} showed that $s^*(n, 2, 1) = (n + 1) 2^n$. Later, Frankl (personal communication) gave a simple and elegant proof that $s^*(n, 2, t) = 2^n\sum_{i=0}^t \binom{n}{i}$ using the Harris--Kleitman correlation inequality. We provide his proof in the first part of this research~\cite{KupaNos1}. In \cite{ryser1974subsets} Ryser studied a similar question about the structure of sets with overlapping property. In particular, he showed that if for $n\notin\{9, 10\}$ $n$ sets of size at least 3 have the overlapping property, then they form either a finite projective plane or a symmetric group divisible design.

\medskip 

There is an equivalent formulation of Problem~\ref{problem: main} for $\mSet = (m,m,\ldots, m)$.
\begin{problem}
\label{problem: main_graphs}
Let $n, \ell, m$ be integers and $H$ be a complete $(m + 1)$-uniform hypergraph on $n$ vertices whose edges are coloured in $\ell$ colours. Let $k_i$, $i=1, \ldots, \ell$ be the number of cliques of colour $i$ in $H$. What is the maximum value $\tilde s(n, \ell, m)$ of $k_1\cdot\ldots\cdot k_\ell$?
\end{problem}
In particular, $\tilde s(n, 2, 1)$ is the maximal value of the product of the number of cliques and number of independent sets in a simple graph $G$.
In \cite{aprile20182} it was shown that Problem~\ref{problem: main} and Problem~\ref{problem: main_graphs} are equivalent and the following holds (for 
the proof, see~\cite[Claim 14]{KupaNos1}).

\begin{proposition}
\label{proposition: formulation equivalence}
Let $n, \ell, m$ be integers, then $s^*(n, \ell, m) = \tilde s(n, \ell, m)$.
\end{proposition}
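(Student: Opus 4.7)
The plan is to prove equality by showing both inequalities $s^*(n,\ell,m) \le \tilde s(n,\ell,m)$ and $s^*(n,\ell,m) \ge \tilde s(n,\ell,m)$ via a natural correspondence between $\mSet$-overlapping families (with $\mSet = (m, \ldots, m)$) and $\ell$-colourings of the edge set of the complete $(m+1)$-uniform hypergraph $K_n^{(m+1)}$ on $[n]$. The key conceptual point is how to read ``clique of colour $i$'': a subset $C \subseteq [n]$ is such a clique iff every $(m+1)$-subset of $C$ is an edge of colour $i$. In particular, any set of size at most $m$ is vacuously a clique of every colour, which matches the fact that such small sets may belong to every family $\ff_i$ simultaneously without violating the $m$-overlapping condition.

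For the direction $s^*(n,\ell,m) \le \tilde s(n,\ell,m)$, I would start from extremal families $\ff_1,\ldots,\ff_\ell$ and build a colouring as follows. For each $S \in \binom{[n]}{m+1}$, if there exists $i$ and $F \in \ff_i$ with $S \subseteq F$, assign colour $i$ to $S$; otherwise, assign $S$ an arbitrary colour. The $m$-overlapping property ensures that this rule is unambiguous: if $S \subseteq F_1 \cap F_2$ with $F_1 \in \ff_i$, $F_2 \in \ff_j$ and $i\ne j$, then $|F_1 \cap F_2| \ge m+1 > m$, a contradiction. By construction, every $F \in \ff_i$ is then a clique of colour $i$, so $|\ff_i| \le k_i$, and multiplying gives $|\ff_1|\cdots|\ff_\ell|\le k_1\cdots k_\ell\le \tilde s(n,\ell,m)$.

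For the reverse direction $\tilde s(n,\ell,m) \le s^*(n,\ell,m)$, I would start from an extremal $\ell$-colouring and set $\ff_i$ to be the family of \emph{all} cliques of colour $i$. To verify the $m$-overlapping condition, suppose $F_1 \in \ff_i$, $F_2 \in \ff_j$ with $i\ne j$ and $|F_1 \cap F_2| \ge m+1$. Pick any $(m+1)$-subset $S \subseteq F_1 \cap F_2$: then $S$ would have to be simultaneously of colour $i$ (since $S\subseteq F_1$) and of colour $j$ (since $S\subseteq F_2$), which is impossible. Hence the $\ff_i$ form a legitimate $m$-overlapping system, and $k_1 \cdots k_\ell = |\ff_1|\cdots|\ff_\ell| \le s^*(n,\ell,m)$. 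Combining the two inequalities yields the proposition.

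There is no real obstacle here; the only thing to be careful about is the bookkeeping for $(m+1)$-subsets that are not contained in any set from the families in the first direction (handled by arbitrary colouring) and the convention that subsets of size at most $m$ are cliques of every colour in the second. Both are immediate once the correspondence is set up.
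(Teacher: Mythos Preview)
Your proof is correct and is exactly the standard two-direction argument via the natural correspondence between $m$-overlapping families and $\ell$-colourings of $\binom{[n]}{m+1}$, which is what the paper defers to (the proof is outsourced to \cite[Claim~14]{KupaNos1} and \cite{aprile20182}). Your handling of the two potentially delicate points---arbitrary colouring of $(m+1)$-sets not contained in any family member, and the vacuous-clique convention for sets of size at most $m$---is precisely what is needed.
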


\medskip

In the previous paper~\cite{KupaNos1}, we established the following theorem.
\begin{theorem}
\label{theorem: maximal families product}
Let $\ell, $ be positive integers and let $\mSet$ be a vector of integers as above.  Then, as $n\to \infty$, we have  the following. 
\begin{align}
\label{eq: target function asymptotics old}
    \targetFunction =
    \left (1 + O \left (\frac{1}{\sqrt{n}} \right ) \right ) \cdot
    2^n\cdot
    \prod_{S\in{[\ell]\choose 2}}
        \bigg(
            \frac 1{\mSet_S!}
            \Big(
                \frac{\mSet_S \cdot n}{
                    \sum_{S'\in {[\ell]\choose 2}}
                        \mSet_{S'}
                }
            \Big)^{\mSet_S}
        \bigg).
\end{align}
\end{theorem}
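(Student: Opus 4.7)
My plan is to prove matching upper and lower bounds, with the lower bound coming from an explicit construction and the upper bound from a counting / entropy argument.

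\textbf{Lower bound.} Set $M = \sum_{S' \in \binom{[\ell]}{2}} m_{S'}$. Partition $[n]$ into blocks $(N_S)_{S \in \binom{[\ell]}{2}}$ with $|N_S| = \lfloor m_S n / M \rfloor$, absorbing any leftover elements into one fixed block. Fix the natural order on $[\ell]$. For each pair $S = \{k,k'\}$ with $k < k'$, let sets in $\ff_k$ restrict to $N_S$ as an arbitrary subset, and sets in $\ff_{k'}$ restrict to $N_S$ as a subset of size at most $m_S$. Define $\ff_k$ as all sets whose restriction to each $N_S$ with $S \ni k$ belongs to the allowed collection. Then the $\mSet$-overlapping property holds: for $F \in \ff_k$, $F' \in \ff_{k'}$ with $S = \{k,k'\}$, one has $F \cap F' \subseteq N_S$ and at least one of $F, F'$ is size-restricted on $N_S$. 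The product of family sizes telescopes pair-by-pair to $\prod_S 2^{|N_S|} \binom{|N_S|}{\le m_S} = 2^n \prod_S \binom{|N_S|}{\le m_S}$. A Lagrange-multiplier computation shows that $|N_S| = m_S n/M$ maximises $\sum_S m_S \log |N_S|$ subject to $\sum_S |N_S| = n$; expanding $\binom{|N_S|}{\le m_S} = \frac{|N_S|^{m_S}}{m_S!}(1 + O(1/n))$ then recovers the claimed formula.

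\textbf{Upper bound.} My plan is probabilistic/entropic. Draw $\fv_k$ independently and uniformly from $\ff_k$; by independence, $\sum_k \log_2 |\ff_k| = \sum_k \entropy(\fv_k) = \entropy(\fv_1, \ldots, \fv_\ell)$. For each coordinate $i \in [n]$, form the profile $\pi_i = (\indicator[i \in \fv_1], \ldots, \indicator[i \in \fv_\ell]) \in \{0,1\}^\ell$ and use the chain rule to write $\entropy(\fv_1, \ldots, \fv_\ell) = \sum_i \entropy(\pi_i \mid \pi_{<i})$. The $\mSet$-overlapping property, translated to expectations via $\EE \sum_i \indicator[i \in \fv_k]\indicator[i \in \fv_{k'}] \le m_{k,k'}$ together with independence, gives $\sum_i p_i^k p_i^{k'} \le m_{k,k'}$ where $p_i^k = \PP(i \in \fv_k)$. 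A convexity/Lagrange argument with these pairwise constraints then bounds $\sum_i \entropy(\pi_i)$, and its optimum is realized when the marginals concentrate on profiles consistent with the construction above; the optimum value matches $\log_2$ of the target expression.

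\textbf{Main obstacle.} The hardest step is extracting the $O(1/\sqrt n)$ error term rather than just $(1+o(1))$. A pure entropy/convexity bound is typically sharp only in the exponent, and the $\binom{n_S}{m_S}$ factor demands $\sqrt{n}$-precision, not mere logarithmic precision. To achieve this, I would supplement the entropy argument with an explicit encoding of each tuple $(F_1, \ldots, F_\ell)$ by the data $\bigl(\bigcup_k F_k,\ (F_k \cap F_{k'})_{S=\{k,k'\}},\ \text{owner assignment}\bigr)$, where each pairwise intersection has size $\le m_S$ and the owner assignment records, for each point not in any pairwise intersection, the (unique, by the overlapping property) family containing it. Counting preimages of this map and invoking Stirling / a local central limit theorem on the block sizes should yield the $O(1/\sqrt n)$ precision. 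Aligning this combinatorial counting with the entropy upper bound, so that the matching of constants is transparent, is the main technical challenge.
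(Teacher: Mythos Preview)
This theorem is quoted from Part~I of the series~\cite{KupaNos1} and is not re-proved in the present paper, so there is no proof here to compare against directly.  From the tools the paper imports (Corollary~\ref{corollary: rinott sets theorem}, Lemma~\ref{lemma: extremal exmaple asymptotics}, Lemma~\ref{lemma: element excluding}), the argument in Part~I evidently runs as follows: first establish the structural \emph{element-excluding} lemma, which says that in an extremal example one can pass to subfamilies $\ff_k'\subset\ff_k$ with $|\ff_k'|\ge(1-O(n^{-1/2}))|\ff_k|$ such that every $x\in[n]$ lies in the support of at most two of the $\ff_k'$; then apply the Rinott--Saks correlation inequality $\prod_k|\ff_k'|\le\prod_{j}|\bigvee_{|S|=j}\bigwedge_{s\in S}\ff_s'|$, which collapses because all triple intersections are $\{\varnothing\}$ and the pairwise intersections have disjoint supports; the remaining optimisation over block sizes is elementary.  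Your lower-bound construction is the same as theirs.

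Your upper-bound plan, however, has a genuine gap.  The entropy step relaxes the pointwise constraint $|F_k\cap F_{k'}|\le m_{k,k'}$ (valid for \emph{every} pair of sets) to the expectation constraint $\sum_i p_i^k p_i^{k'}\le m_{k,k'}$, and then optimises $\sum_{i,k} h(p_i^k)$ under those quadratic constraints.  This relaxation is too weak even at the level of the polynomial exponent: already for $\ell=2$ and $m_{1,2}=m$, the choice $p_i^1=\tfrac12$, $p_i^2=2m/n$ is feasible and yields entropy $\approx n+2m\log_2 n$, whereas the true maximum product is $2^n\binom{n}{\le m}$ with logarithm $\approx n+m\log_2 n$.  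So your convexity optimum does \emph{not} match the target expression; it overshoots by a factor $n^{m}$ (and in general by $n^{\sum_S m_S}$).  The encoding you sketch as a backup has the complementary defect: assigning each singly-covered element an ``owner'' in $[\ell]\cup\{0\}$ gives $(\ell+1)^n$ rather than $2^n$ in the main term.  Both issues stem from the same missing idea: one must first prove that in an extremal configuration almost all of the mass lives on tuples where each element is used by at most two families.  That is precisely the content of Lemma~\ref{lemma: element excluding}, and it does not follow from soft entropy or counting considerations---its proof uses degree estimates (Proposition~\ref{proposition: multiplicative property of degrees}) specific to extremal families.  Once you have that structural fact, the $2^n$ base and the correct polynomial both drop out of Rinott--Saks, with the $O(n^{-1/2})$ error inherited from the element-excluding step.
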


The main result of this paper is a structural result concerning the extremal example. It also allows us to tighten the asymptotics.  

For an arbitrary oriented graph $T_\ell = ([\ell], E)$ and $k\in[\ell]$ define $In_k = \{k' \mid (k', k) \in E\}$ and $Out_k = \{k' \mid (k, k') \in E\}$.

\begin{theorem}
\label{theorem: maximal families structure}
Suppose $\ell, \mSet$ are fixed while $n$ tends to infinity. Then the asymptotic of $s^*(n, \ell, \mSet)$ is the following:
\begin{align}
\label{eq: target function asymptotics}
    \targetFunction = 
    \left (1 + O \left (\frac{1}{n} \right ) \right ) \cdot 
    2^n\cdot 
    \prod_{S\in{[\ell]\choose 2}}
        \bigg(
            \frac 1{\mSet_S!}
            \Big(
                \frac{\mSet_S \cdot n}{
                    \sum_{S'\in {[\ell]\choose 2}}
                        \mSet_{S'}
                }
            \Big)^{\mSet_S}
        \bigg).
\end{align}
Next, let $\ff_k$, $k \in [\ell]$ be the families from the extremal example. Then there is an oriented graph $T_\ell = ([\ell], E)$ and there are sets $A_S$ indexed by $S \in E$ such that the following holds:
\begin{enumerate}
    \item for two distinct indices $k_1, k_2 \in [\ell]$, there is an edge between $k_1$ and $k_2$ (in one of the directions)  iff $\mSet_{\{k_1,k_2\}} > 0$; 
    \item sets $A_S$, $S \in E$, form a partition of $[n]$, and, in particular,
    \begin{align*}
        |A_S| = (1 + O(n^{-1})) \cdot \frac{\mSet_S \cdot n}{
                    \sum_{S'\in {[\ell]\choose 2}}
                        \mSet_{S'}
                };
    \end{align*}
    \item for any $k \in [\ell]$, we have
    \begin{align*}
        2^{
            \bigsqcup_{k' \in In_k} 
                A_{(k', k)}
        } \vee \bigvee_{k' \in Out_k} \binom{A_{(k, k')}}{\le \mSet_{\{k, k'\}}}
        \subset
        \ff_k
    \end{align*}
\end{enumerate}
if $n$ is large enough.
\end{theorem}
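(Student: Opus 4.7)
My plan is to combine three ingredients: an explicit lower-bound construction, a sharpening of the upper bound in Theorem~\ref{theorem: maximal families product} from $O(n^{-1/2})$ to $O(n^{-1})$, and a stability argument that extracts the structure. First I would fix any oriented graph $T_\ell = ([\ell], E)$ whose underlying graph is the support graph $\{\{k_1, k_2\} : \mSet_{k_1,k_2} > 0\}$ and partition $[n] = \bigsqcup_{S \in E} A_S$ with $|A_S|$ equal to $\lfloor \alpha_S n \rfloor$ or $\lceil \alpha_S n \rceil$, where $\alpha_S := \mSet_S / \sum_{S'} \mSet_{S'}$. Let $\ff_k^*$ be the right-hand side of the displayed containment in item~3. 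The $\mSet$-overlapping property is immediate: any pair $F_i \in \ff_{k_i}^*$, $i = 1, 2$, can intersect only inside $A_{\{k_1, k_2\}}$, where the tail of the oriented edge contributes at most $\mSet_{\{k_1,k_2\}}$ elements. A direct count gives
\begin{align*}
    \prod_{k \in [\ell]} |\ff_k^*| = 2^n \prod_{S \in E} \binom{|A_S|}{\le \mSet_S},
\end{align*}
and choosing $\alpha_S$ as above (optimal by AM-GM under $\sum_S \alpha_S = 1$) matches \eqref{eq: target function asymptotics} with relative error $O(n^{-1})$.

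For the matching upper bound I would revisit the entropy-based argument of \cite{KupaNos1}. That proof lost an $O(n^{-1/2})$ factor at a first-order AM-GM step on the average pairwise intersection sizes; since the first-order perturbation vanishes at the optimum $\alpha_S^* = \mSet_S / \sum_{S'} \mSet_{S'}$, keeping the second-order Taylor term should yield the stronger $O(n^{-1})$.

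The main obstacle, and the substantive content of the theorem, is the structural claim. For an extremal tuple $(\ff_1, \ldots, \ff_\ell)$, $x \in [n]$, and $k \in [\ell]$, set $p_k(x) := |\ff_k(x)| / |\ff_k|$. Near-tightness in the refined upper bound should force, for all but $o(n)$ elements $x$, the profile $(p_1(x), \ldots, p_\ell(x))$ to be within $o(1)$ of one of finitely many ``ideal'' profiles indexed by a pair $S$ with $\mSet_S > 0$ together with an orientation $(k_{\mathrm{tail}}, k_{\mathrm{head}})$ of $S$; the ideal profile has value $1/2$ at $k_{\mathrm{head}}$, $\mSet_S / |A_S| = O(1/n)$ at $k_{\mathrm{tail}}$, and $0$ at all other indices. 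Grouping elements by their profile yields the partition $\{A_S\}$ and the orientation $T_\ell$. The hard step will be promoting this approximate picture to the asserted exact inclusion $\ff_k^* \subseteq \ff_k$: I plan to use an exchange argument, showing that any hypothetical $F \in \ff_k^* \setminus \ff_k$ could be added to $\ff_k$ without violating any overlap constraint, because the other families respect the partition up to $o(n)$ exceptional elements that can be ``cleaned'' by induction on $n$, contradicting extremality. Executing the cleaning inductively while controlling the $o(n)$ exceptions quantitatively via the $O(n^{-1})$ error is where I expect the main technical work to lie.
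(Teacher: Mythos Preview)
Your lower-bound construction is fine and matches the paper. The remaining two parts, however, have genuine gaps.

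\textbf{Upper bound.} The $O(n^{-1/2})$ loss in \cite{KupaNos1} does not come from an AM--GM or Taylor step that can be sharpened by second-order analysis; it comes from Lemma~\ref{lemma: element excluding}, which only shows that one can prune each $\ff_k$ to a subfamily $\ff_k'$ with two-family support structure while losing a $1-O(n^{-1/2})$ fraction. In the paper the $O(n^{-1})$ bound is not proved independently: it is obtained only \emph{after} the structural analysis, via Proposition~\ref{proposition: subfamilies 1/n} (which improves the pruning loss to $O(n^{-1})$ using the partition $A_{k,k'}$) fed into Lemma~\ref{lemma: extremal exmaple asymptotics}. So you cannot use the sharp upper bound as input to a stability argument; the logic runs the other way.

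\textbf{Structure.} Two steps in your sketch would fail as written. First, your degree-profile idea does not by itself yield an \emph{orientation}: you need that for each pair $\{k_1,k_2\}$ the ``wrong-direction'' tentacle mass $d_{k_2\to k_1}$ is not merely small but $e^{-\Omega(n)}$. The paper proves this (Lemma~\ref{lemma: direction of tentacles}) by a concentration argument via Azuma for an exposure martingale, and this exponential bound is used repeatedly downstream. Second, your exchange step is circular: if $F\in\ff_k^*\setminus\ff_k$ then, since $\ff_k$ is maximal, there \emph{is} a set $e$ in some $\ff_{k'}$ with $|F\cap e|>\mSet_{k,k'}$, so you cannot simply add $F$. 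The paper's actual argument (Section~4.6) removes all such blocking sets from the other families and then must show that the gain $|\hh_{k_0}\setminus\ff_{k_0}|/|\ff_{k_0}|$ dominates the total loss $\sum_e d_{k'}(e)$; this comparison is delicate and requires the Kruskal--Katona theorem together with the precise degree asymptotics of Lemma~\ref{lemma: exact order}, which in turn rests on a Zykov-type symmetrization (Lemma~\ref{lemma: symmetrization argument}). An ``induction on $n$'' cleaning of $o(n)$ exceptional elements does not provide this quantitative control.
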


We assume that $2^{
            \bigsqcup_{k' \in In_k} 
                A_{(k', k)}
        } = \{\varnothing\}$ if $In_k = \varnothing$. In what follows, we omit the parentheses in $A_{(k, k')}$. Moreover, slightly abusing notation, we sometimes let $A_{k, k'}$ and $A_{k', k}$ identify the same object without keeping information in subscript about the direction of the edge between $k$ and $k'$ in $T_\ell$.

 \textbf{Remark.} Theorem~\ref{theorem: maximal families structure} does not precisely describe the extremal example. However, a family
\begin{align*}
    2^{
            \bigsqcup_{k' \in In_k} 
                A_{(k', k)}
        } \vee \bigvee_{k' \in Out_k} \binom{A_{(k, k')}}{\le \mSet_{\{k, k'\}}}
\end{align*}
contains $1 - O(n^{-1})$ fraction of an extremal family $\ff_k$, so it presents almost full information about $\ff_k$. As we will show in Section~\ref{section: graph coloring}, the extremal example is essentially non-trivial even when $m=1$ and $\ell = 5$.



From here, we can derive a cleaner formula for 
the asymptotic of $s^*(n, \ell, m)$.
\begin{corollary}
\label{corollary: asympotic of m-uniform target function}
Suppose $\ell$ and $m$ are fixed while $n$ tends to infinity. Then we have the following: 
\begin{align*}
    s^*(n, \ell, m) = \left (1 + O \left ( \frac{1}{n}\right ) \right ) 
\left [
     \frac{1}{m!}
     \left (
        \frac{n}{\binom{\ell}{2}}
     \right )^m
    \right ]^{\binom{\ell}{2}}
    2^n.
\end{align*}
\end{corollary}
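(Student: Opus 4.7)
The plan is to simply specialize the main formula from Theorem~\ref{theorem: maximal families structure} (equation~\eqref{eq: target function asymptotics}) to the case when every entry of the vector $\mSet$ equals the same integer $m$. No new structural or probabilistic argument is needed; this is a direct algebraic simplification, so the only ``obstacle'' is bookkeeping of the constants.

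First I would observe that, when $\mSet_{S'}=m$ for all $S'\in \binom{[\ell]}{2}$, the denominator inside the bracket of \eqref{eq: target function asymptotics} becomes
\begin{align*}
    \sum_{S'\in \binom{[\ell]}{2}} \mSet_{S'} \;=\; m\binom{\ell}{2}.
\end{align*}
Consequently, the base of the exponential in each factor of the product simplifies as
\begin{align*}
    \frac{\mSet_S\cdot n}{\sum_{S'\in \binom{[\ell]}{2}} \mSet_{S'}} \;=\; \frac{m\cdot n}{m\binom{\ell}{2}} \;=\; \frac{n}{\binom{\ell}{2}},
\end{align*}
and each factor reduces to $\frac{1}{m!}\bigl(n/\binom{\ell}{2}\bigr)^m$.

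Second, since the product in \eqref{eq: target function asymptotics} ranges over all unordered pairs $S\in \binom{[\ell]}{2}$, there are exactly $\binom{\ell}{2}$ identical factors. Multiplying them together yields the bracketed $\binom{\ell}{2}$-th power in the statement of the corollary, and carrying along the factor $(1+O(1/n))\cdot 2^n$ from the theorem finishes the argument. The $O(1/n)$ error term is preserved verbatim since the constants absorbed into it depend only on $\ell$ and $m$, both of which are fixed as $n\to\infty$.
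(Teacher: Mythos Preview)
Your proof is correct and is exactly the intended derivation: the paper states the corollary as an immediate algebraic specialization of equation~\eqref{eq: target function asymptotics} with $\mSet_S=m$ for all $S$, and does not even spell out the computation. Your bookkeeping of the constants and the error term is accurate.
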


\subsection{Extremal coloring of the complete graph}
\label{section: graph coloring}

According to Proposition~\ref{proposition: formulation equivalence}, Problem~\ref{problem: main} is equivalent to the problem of an extremal edge coloring of the complete graph on $n$ vertices with respect to some functional of monochromatic cliques. The goal of this section is to illustrate, to what extent this coloring can be non-trivial, and to discuss some interesting phenomena related to it.

In order to illustrate the complexity of the problem in the situation when it is still tractable, we choose $\ell = 5$ and obtain the unique extremal example. Let $\mathbf{T}_5$ be the tournament from Figure~\ref{fig: extremal coloring 5}, left. Using the techniques developed for Theorem~\ref{theorem: maximal families structure}, we get the following result.

\begin{theorem}
\label{theorem: extremal graph coloring}
Let $\ff_1, \ldots, \ff_{5}$ form an extremal example of overlapping families. Then the underlying tournament from Theorem~\ref{theorem: maximal families structure} must be isomorphic to $\mathbf{T}_5$. Assume that it is $\mathbf{T}_5$. For $k \in [3]$, define $i_k$ as the unique element of $In_k$ and $o_k$ as the unique element of $[3] \cap Out_k$. Then, there exists a partition $W_1, W_2, W_3$ of $A_{4, 5}$ such that
\begin{align*}
    \ff_k = 2^{A_{i_k, k}} \vee \Bigg ( 
        & \bigvee_{k' \in Out_k} \binom{A_{k, k'}}{\le 1} {\bm \cup} \bigcup_{\substack{s \in \{4, 5\}, \\ s' \in \{4, 5\} \setminus \{s\}}} \binom{A_{o_k, s}}{\le 1} \vee \binom{A_{k, s'}}{\le 1} \\
        & {\bm \cup} \binom{W_{k}}{\le 1} \vee \binom{A_{k, o_k}}{\le 1} \cup \binom{W_{o_k}}{\le 1}
    \Bigg ) \cup \binom{[n]}{1}, \quad k \in [3],
\end{align*}
and
\begin{align*}
    \ff_k = 2^{\bigsqcup_{k' \in In_k} A_{k', k}}  \vee \bigvee_{k' \in Out_k} \binom{A_{k, k'}}{\le 1} \cup \binom{[n]}{1}, \quad k \in \{4, 5\},
\end{align*}
provided $n$ is large enough.
\end{theorem}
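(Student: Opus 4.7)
The plan is to start from Theorem~\ref{theorem: maximal families structure} applied with $\ell = 5$ and $\mSet = (1, \ldots, 1)$. That theorem supplies a tournament $T_5$ on $[5]$, a partition $\{A_S\}_{S \in E(T_5)}$ of $[n]$ with $|A_S| = (1+O(1/n))\, n/10$, and, for every $k \in [5]$, the inclusion
\begin{align*}
    \bb_k := 2^{\bigsqcup_{k' \in In_k} A_{k', k}} \vee \bigvee_{k' \in Out_k} \binom{A_{k, k'}}{\le 1} \subset \ff_k.
\end{align*}
It remains to identify $T_5$ up to isomorphism and to describe the extras $\ff_k \setminus \bb_k$. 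Because the cardinalities $|\bb_k|$ already saturate Theorem~\ref{theorem: maximal families product} to leading order irrespective of $T_5$, the shape of the tournament is pinned down by the finer behaviour of the extras.

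The first substantive step is to classify the extras. For $F \in \ff_k$ and a non-incident part $A_S$ with $S = \{k_1, k_2\}$ and $k \notin S$, let $k_{\mathrm{head}} \in \{k_1, k_2\}$ be the head of the edge between $k_1$ and $k_2$; then $\bb_{k_{\mathrm{head}}}$ already contains $2^{A_S}$, so the set $F \cap A_S$ itself lies in $\ff_{k_{\mathrm{head}}}$, and the $1$-overlap property forces $|F \cap A_S| \le 1$. An analogous argument on out-parts shows that $F$ has at-most-singleton trace on each $A_{k, k'}$ with $k' \in Out_k$ as well. Thus every extra is determined by an unrestricted trace on the in-parts of $k$ and a choice of at-most-singletons on each of the remaining nine parts, subject to pairwise $1$-overlap checks against the core of every other family. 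This produces a concrete finite catalogue of admissible extras for each tournament shape.

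The second step is to pin down $T_5$. Running over the few isomorphism classes of tournaments on five vertices, I would tabulate the maximum number of admissible extras that can be simultaneously placed in $\ff_1, \ldots, \ff_5$. A short case analysis shows that only the shape of $\mathbf{T}_5$ --- a directed $3$-cycle $1 \to 2 \to 3 \to 1$ together with all edges from $\{1,2,3\}$ into $\{4,5\}$ and one edge between $4$ and $5$ --- achieves the strict maximum. In every alternative shape either a family has fewer non-incident parts from which to draw singletons, or a would-be pair-extra is blocked because some neighbour's core already contains the pair (via a Boolean-lattice in-part).

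Finally, with $T_5 \cong \mathbf{T}_5$ fixed, the catalogue translates into the stated formulae. For $k \in \{4, 5\}$ any non-singleton extra is blocked by the Boolean-lattice in-part of some neighbour in $[3]$, leaving only the singletons of $[n]$, which explains the $\cup \binom{[n]}{1}$ term. For $k \in [3]$ the admissible pair-extras are precisely the $\binom{A_{o_k, s}}{\le 1} \vee \binom{A_{k, s'}}{\le 1}$ for $\{s, s'\} = \{4, 5\}$, together with pair-extras using $A_{4, 5}$; to maximise the count the elements of $A_{4, 5}$ are partitioned into three pieces $W_1, W_2, W_3$, with $W_k$ serving as the $A_{4,5}$-side of extras placed in $\ff_k$ (and $W_{o_k}$ playing the dual role). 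The main obstacle is the case analysis in the second step, where the strict suboptimality of every competing tournament must be verified in the lower-order term --- meticulous but routine in spirit once the catalogue of admissible extras is in hand.
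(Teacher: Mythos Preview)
Your plan conflates two distinct notions: being \emph{admissible} against the cores $\bb_{k'}$, and being \emph{present} in the extremal $\ff_k$. These do not coincide in either direction, because extras in different families interact. With $T_5 = \mathbf{T}_5$ (say $1\to 2\to 3\to 1$ and all of $1,2,3$ pointing into $\{4,5\}$), the pair $\{x,y\}$ with $x \in A_{1,2}$, $y \in A_{3,5}$ meets every core $\bb_{k'}$, $k'\ne 4$, in at most one point and so passes your admissibility test for $\ff_4$; yet the theorem places $\{x,y\}$ in $\ff_2$ (via the $\binom{A_{o_2,5}}{\le 1}\vee\binom{A_{2,4}}{\le 1}$ term), and it cannot lie in both. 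Your claim that for $k\in\{4,5\}$ every non-singleton extra is ``blocked by the Boolean-lattice in-part of some neighbour'' is therefore false as stated; such sets are excluded from $\ff_4$ not by any core-level block but because allocating them to $\ff_2$ instead yields a larger product. Deciding that allocation requires knowing the relative orders of magnitude of the competing extras, which a catalogue of core-admissible sets does not supply.

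This is also what is missing from your second step. All tournaments give the same $\prod_k|\bb_k|$ to leading order; the tie is broken only at order $n^{-1}$. The paper shows (Lemma~\ref{lemma: functional 1 for tournament}) that the $n^{-1}$ coefficient is governed by $r(T_\ell)=\sum_{S\in\binom{[\ell]}{2}}\indicator\{In_S\ne\varnothing\}$, and the derivation rests on a dichotomy that is not combinatorial: an extra touching $A_S$ with $S\notin\binom{Out_k}{2}$ has exponentially small degree in $\ff_k$ (via Lemma~\ref{lemma: direction of tentacles} and Proposition~\ref{proposition: child  can not overlap}), while one with $S\in\binom{Out_k}{2}$ contributes at order $n^{-1}$, and two-point extras outside $\support\bb_k$ are only $O(n^{-2})$ (via Proposition~\ref{proposition: multiplicative property of degrees} and Lemma~\ref{lemma: exact order}). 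Only after this is established does a case analysis on $\max_k|Out_k|$ single out $\mathbf{T}_5$ as the unique maximiser with $r=7$. The final partition $W_1,W_2,W_3$ of $A_{4,5}$ and the exact shape of $\ff_4,\ff_5$ likewise come from a convexity argument on the residual product together with the edge-colouring reformulation, not from a catalogue of core-admissible sets.
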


\begin{figure}[!ht]
    \centering
    \includegraphics[width=\textwidth]{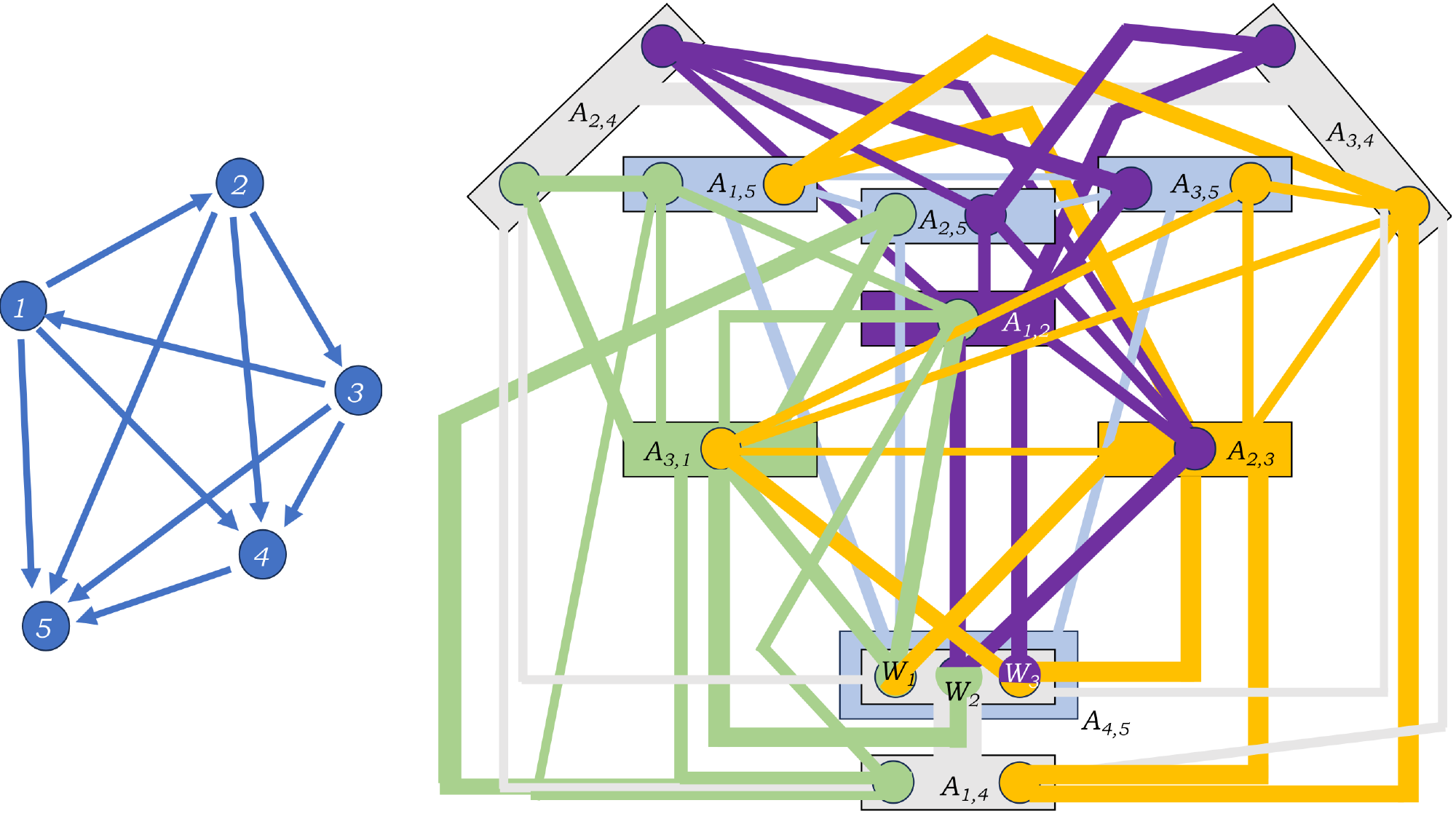}
    \caption{{\it Left}: tournament $\mathbf{T}_5$. {\it Right}: coloring of $K_n$ into 5 colors that maximizes $\tilde{s}(n, 5, 1)$. A set $A_S$ is colored in $p$ on the picture if and only if all edges $\{x, y\} \in \binom{A_S}{2}$ are colored in $p$. Given $S, S'$ that are both distinct from $\{4, 5\}$, the edge between $A_S$ and $A_{S'}$ is colored in $p$ if and only if all edges $\{x, y\}$, $x \in A_S$ and $y \in A_{S'}$ are colored in $p$. Finally, for any $k, \hat k \in [3]$ the edge between $W_{\hat k}$ and $A_{k', k}, k' \in In_k$ is colored in $p$ if and only if all edges $\{x, y\}$, $x \in A_{k', k}$ and $y \in W_{\hat k}$, are colored in $p$.}
    \label{fig: extremal coloring 5}
\end{figure}

The partition $A_S, S \in E(\mathbf{T}_5),$ of $[n]$ and the partition $W_1, W_2, W_3$ of $A_{4, 5}$ should be chosen to maximize $s^*(n, 5, 1)$. So, we have $|W_k| = (1/3 + O(n^{-1})) |A_{4, 5}|$ due to convexity.

We draw the optimal coloring on Figure~\ref{fig: extremal coloring 5}, right. The proof of Theorem~\ref{theorem: extremal graph coloring} is sketched in Section~\ref{sec5}.

The first point to note is that the tournament for the extremal example is not transitive. More generally, we prove (see Lemma~\ref{lemma: functional 1 for tournament})  that for $m = 1$ and any $\ell$ the tournament that corresponds to an extremal example should maximize the following functional:
\begin{align}
\label{eq: 1-functional in intro}
    \sum_{1 \le s_1 < s_2 \le \ell} \indicator \{In_{s_1} \cap In_{s_2} \neq \varnothing \},
\end{align}
where $\indicator\{\cdot \}$ is the indicator function. Clearly, it is bounded by $\binom{\ell}{2}$. Surprisingly, this bound is tight for $\ell \ge 24$ since a uniformly sampled random tournament achieves it with probability at least $1 - \binom{\ell}{2} (3/4)^{\ell - 2} > 0$ due to union bound. Moreover, it means that for large enough $\ell$ almost any tournament has~\eqref{eq: 1-functional in intro} equal to $\binom{\ell}{2}$. We conjecture that the same holds for $\ell \ge 7$. For $\ell = 6$, we are able to obtain bound $\eqref{eq: 1-functional in intro} \le \binom{\ell}{2} - 1$, while for the Paley tournament~\footnote{Let $p$ be a prime such that $p\equiv 3\mod 4$. Then, the Paley tournament $PT_p = (\mathbb{Z}_p, E)$ is defined as follows: $(x, y) \in E$ if and only if $x - y$ is a quadratic residue modulo $p$.} on $7$ vertices we have $\eqref{eq: 1-functional in intro} = \binom{\ell}{2}$.

The second point we want to highlight is that for $\ell = 5$ each $\ff_k$ admits decomposition:
\begin{align*}
    |\ff_k| = \mathcal{G}^0_k \sqcup \mathcal{G}^1_k \sqcup \mathcal{G}^2_k \sqcup \mathcal{R}_k,
\end{align*}
where  $\mathcal{G}^i_k$ has size $O(n^{-i}) |\ff_k|$ for $i=0,1,2$,  $|\mathcal{R}_k| = e^{-\Omega(n)} |\ff_k|$, and each $\mathcal{G}^i_k$ has its own particular structure. We conjecture the same phenomenon can be observed in general, and, additionally, there exists an algorithm that can produce such decomposition in polynomial time in $\ell$ for large enough $n$.

\subsection{Organization of the paper}

The work is organised as follows. In Section~\ref{section: tools} we list some standard tools that we use. In Section~\ref{section: sketch of the proof} we describe general proof scheme.  In Section~\ref{section: further structural results} we prove Theorem~\ref{theorem: maximal families structure}. In Section~\ref{sec5}, we give a sketch of the proof of Theorem~\ref{theorem: extremal graph coloring}. In Appendix, we give some omitted proofs.

Throughout this paper, the asympotic notation, i.e. $O(\cdot)$, $\Omega(\cdot)$, is for fixed  $\ell$ and $\mSet$ and w.r.t. $n\to \infty$.

\section{Tools}
\label{section: tools}

\subsection{Correlation inequalities}

Correlation inequalities play a crucial role in the first part of our work~\cite{KupaNos1}. However, we also use it an this paper as in ancillary tool in Lemma~\ref{lemma: direction of tentacles}.

\begin{corollary}[Theorem 4.1 from~\cite{Rinott1991}]
\label{corollary: rinott sets theorem}
For any sets collections $\aaa_1, \ldots, \aaa_m$,
\begin{align}
    \prod_{k = 1}^m |\aaa_k| \le \prod_{k = 1}^m \left | \bigvee_{S: |S| = k} \left (\bigwedge_{s \in S} \aaa_s \right ) \right |.
\end{align}
\end{corollary}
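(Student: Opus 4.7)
This is the multi-family analogue of Daykin's lemma $|\aaa| \cdot |\bb| \le |\aaa \vee \bb| \cdot |\aaa \wedge \bb|$, itself a one-line consequence of the Ahlswede--Daykin (AD) four functions inequality applied with $\alpha = \beta = \gamma = \delta = \indicator$. The plan is to derive the $m$-family statement on the distributive lattice $L = 2^{[n]}$ from an $m$-function extension of AD, with Daykin's lemma as the base case $m = 2$.

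Write $\bb_k^{(t)} := \bigvee_{S \in \binom{[t]}{k}} \bigwedge_{s \in S}\aaa_s$ and let $e_k(x_1,\ldots,x_m) := \bigcup_{S \in \binom{[m]}{k}} \bigcap_{s \in S} x_s$ denote the $k$-th lattice elementary symmetric polynomial. By the very definition of $\bb_k^{(m)}$, whenever $x_s \in \aaa_s$ for every $s$ one has $e_k(x_1,\ldots,x_m) \in \bb_k^{(m)}$. Thus the pointwise bound
\[
\prod_{k=1}^{m} \indicator[x_k \in \aaa_k] \;\le\; \prod_{k=1}^{m} \indicator\bigl[e_k(x_1,\ldots,x_m) \in \bb_k^{(m)}\bigr]
\]
holds for every tuple $(x_1,\ldots,x_m) \in L^m$. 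Summing this over $L^m$ in a way that preserves the product structure on the right --- rather than collapsing both sides into a single $m$-fold sum --- is exactly the content of an $m$-function AD extension, which I would invoke from~\cite{Rinott1991}; specialising to indicators produces $\prod_k |\aaa_k| \le \prod_k |\bb_k^{(m)}|$.

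The main obstacle is establishing this $m$-function AD. The naive inductive strategy --- attach $\aaa_m$ to the sequence $\bb_1^{(m-1)},\ldots,\bb_{m-1}^{(m-1)}$ by repeated pairwise Daykin --- already stumbles at the second step: although $\aaa_m \vee \bb_1^{(m-1)} = \bb_1^{(m)}$, the residual $\aaa_m \wedge \bb_1^{(m-1)}$ need not embed set-theoretically into $\bb_2^{(m)}$, since $\bb_2^{(m)}$ contains meets $\aaa_i \wedge \aaa_j$ with $i,j < m$ whose elements are not of the required form. The workaround, which I would follow, is to apply AD at each inductive step with non-indicator weights $\alpha,\beta,\gamma,\delta$ that encode the $k$-th and $(k+1)$-th elementary symmetric collections built from $\aaa_1,\ldots,\aaa_{m-1}$; the pointwise hypothesis $\alpha(x)\beta(y) \le \gamma(x \vee y)\delta(x \wedge y)$ is then verified via distributivity of $\cap$ over $\cup$ in $2^{[n]}$, and the induction telescopes to the stated bound.
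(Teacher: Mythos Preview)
The paper does not prove this statement at all --- it is listed in Section~\ref{section: tools} as a black-box tool imported from Rinott--Saks, with no argument given. Your sketch is a faithful outline of the actual Rinott--Saks proof: the $2m$-function extension of Ahlswede--Daykin (their main theorem) specialised to indicators $f_k = \indicator_{\aaa_k}$, $g_k = \indicator_{\bb_k^{(m)}}$, with the pointwise hypothesis verified exactly as you wrote. Since you explicitly invoke that extension from the same reference, your proposal and the paper's ``proof'' coincide --- both amount to a citation of~\cite{Rinott1991} --- you have simply made the internal mechanism (and the reason naive pairwise Daykin fails) explicit, which is correct and useful.
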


\subsection{Concentration inequalities}

One probabilistic tool that we require is a concentration inequality for martingales. In this case, we employ Azuma's inequality (see~\cite{Alon2016}):
\begin{theorem}[Azuma's iniequality]
\label{theorem: Hoeffding's iniequality}
Let $X_0, X_1 \ldots, X_n$ be a martingale sequence, such that $|X_{i} - X_{i - 1}| \le c_i$, $i \in [n]$. Then for all $t > 0$:
\begin{align*}
    \PP \left [
        X_n - X_0 \ge t
    \right ]
    \le
    \exp \left (
        - \frac{
            t^2
        }{
            \sum_{i = 1}^n c_i^2
        }
    \right ).
\end{align*}
\end{theorem}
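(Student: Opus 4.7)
The plan is to follow the standard exponential moment (Chernoff) technique, reducing the tail bound to an exponential moment bound that is obtained by conditioning down the martingale one step at a time. Let $D_i := X_i - X_{i-1}$, so that $X_n - X_0 = \sum_{i=1}^n D_i$, $|D_i| \le c_i$, and by the martingale property $\EE[D_i \mid X_0, \ldots, X_{i-1}] = 0$. The first step is to apply Markov's inequality to the positive random variable $e^{\lambda(X_n - X_0)}$ for an arbitrary parameter $\lambda > 0$:
\[
\PP\bigl[X_n - X_0 \ge t\bigr] \;\le\; e^{-\lambda t}\, \EE\bigl[e^{\lambda(X_n - X_0)}\bigr].
\]
The whole game now reduces to controlling the moment generating function on the right.

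The key auxiliary result I would prove is \emph{Hoeffding's lemma}: if $Y$ is a random variable with $\EE Y = 0$ and $|Y| \le c$, then $\EE[e^{\lambda Y}] \le e^{\lambda^2 c^2 / 2}$. The proof uses convexity: for $y \in [-c, c]$,
\[
e^{\lambda y} \;\le\; \frac{c + y}{2c}\, e^{\lambda c} \,+\, \frac{c - y}{2c}\, e^{-\lambda c},
\]
so taking expectations eliminates the linear term (by $\EE Y = 0$) and leaves $\cosh(\lambda c)$, which is bounded by $e^{\lambda^2 c^2 / 2}$ via a termwise comparison of Taylor series.

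Next I would bound the moment generating function of $X_n - X_0$ by peeling off the last increment. Conditioning on $\ff_{n-1} := \sigma(X_0, \ldots, X_{n-1})$ and using the tower property,
\[
\EE\bigl[e^{\lambda(X_n - X_0)}\bigr]
\;=\; \EE\Bigl[e^{\lambda(X_{n-1} - X_0)}\, \EE\bigl[e^{\lambda D_n} \,\big|\, \ff_{n-1}\bigr]\Bigr].
\]
The inner conditional expectation fits Hoeffding's lemma applied to the conditional distribution of $D_n$ given $\ff_{n-1}$ (which has conditional mean zero and is supported in $[-c_n, c_n]$), hence is at most the \emph{deterministic} constant $e^{\lambda^2 c_n^2 / 2}$. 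Pulling that constant out and iterating the argument for $n-1, n-2, \ldots, 1$ yields
\[
\EE\bigl[e^{\lambda(X_n - X_0)}\bigr] \;\le\; \exp\!\Bigl(\tfrac{\lambda^2}{2} \textstyle\sum_{i=1}^n c_i^2\Bigr).
\]

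Finally I would optimize in $\lambda$: plugging back gives $\PP[X_n - X_0 \ge t] \le \exp\bigl(-\lambda t + \tfrac{\lambda^2}{2} \sum c_i^2\bigr)$, and choosing $\lambda = t / \sum_{i=1}^n c_i^2$ produces the bound $\exp\bigl(-t^2 / (2\sum_{i=1}^n c_i^2)\bigr)$, matching the stated inequality (the denominator as written in the theorem omits the factor $2$, which I read as a typographical slip; the argument itself is standard and presents no real obstacle — the only mildly delicate point is the conditional application of Hoeffding's lemma, which must be done \emph{before} taking the outer expectation so that the $e^{\lambda^2 c_i^2/2}$ factor is a genuine constant that can be pulled out of the expectation).
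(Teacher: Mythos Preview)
The paper does not prove this statement at all: it is listed in Section~\ref{section: tools} as a standard tool and simply cited to~\cite{Alon2016}. Your argument is the canonical Chernoff/Hoeffding proof and is correct, including your observation that the denominator in the displayed bound should carry a factor of~$2$; there is nothing to compare against.
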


\subsection{The Kruskal---Katona theorem}
\label{subsection: Kruskal-Katona theorem}

The last tool we list in this Section is the Kruskal---Katona theorem. First, we provide the definition of the upper shadow:
\begin{definition}
\label{definition: upper shadow}
    Given an arbitrary family $\ff \subset \binom{[n]}{k}$ for some $k$, its upper shadow $\partial_{u} \ff$ is defined as follows:
    \begin{align*}
        \partial_u \ff = \left \{F \in \binom{[n]}{k + 1} \mid \exists F' \in \ff \text{ s.t. } F' \subset F \right \}.
    \end{align*}
\end{definition}
Second, we recall that the lexicographical order on $\binom{X}{k}$ is a total order in which $A <_{Lex} B$ iff $\min(A \triangle B) \in A$. Here we assume that $X$ is finite and linearly ordered. Then, the Kruskal---Katona theorem states the following:
\begin{theorem}
\label{theorem: Kruskal-Katona theorem}
    Let $X$ be a finite linearly ordered set and $k$ a positive integer. Given an arbitrary family $\ff \subset \binom{X}{k}$ of some fixed size, the size of its upper shadow is the smallest  when $\ff$ consists of first $|\ff|$ sets in lexicographical order.
\end{theorem}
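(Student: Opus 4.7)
The plan is to prove the Kruskal--Katona theorem via the classical two-stage shifting (compression) argument, followed by induction on the size of the ground set.

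\textbf{Step 1 (shifts do not enlarge the upper shadow).} Fix $X = [n]$. For $1 \le i < j \le n$, define $S_{ij}(\ff)$ by replacing every $F \in \ff$ with $j \in F$, $i \notin F$, and $(F \setminus \{j\}) \cup \{i\} \notin \ff$ by $(F \setminus \{j\}) \cup \{i\}$. Then $|S_{ij}(\ff)| = |\ff|$ and every resulting set still has size $k$. The key point is $|\partial_u S_{ij}(\ff)| \le |\partial_u \ff|$: a short case analysis shows that any $G \in \partial_u S_{ij}(\ff) \setminus \partial_u \ff$ must contain $i$ but not $j$, and the assignment $G \mapsto (G \setminus \{i\}) \cup \{j\}$ injects it into $\partial_u \ff \setminus \partial_u S_{ij}(\ff)$ (using that the witness $F' \in S_{ij}(\ff) \setminus \ff$ with $F' \subset G$ pulls back to $F \in \ff$ with $F \subset (G \setminus \{i\}) \cup \{j\}$). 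Since the non-negative integer weight $w(\ff) := \sum_{F \in \ff} \sum_{x \in F} 2^{n - x}$ strictly decreases at every nontrivial shift, iterating terminates at a \emph{left-compressed} family $\ff^*$ with $|\ff^*| = |\ff|$ and $|\partial_u \ff^*| \le |\partial_u \ff|$.

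\textbf{Step 2 (splitting and induction on $n$).} For left-compressed $\ff^*$ decompose
\[
    \ff^* = (\{1\} \vee \aaa) \sqcup \bb, \qquad \aaa := \ff^*(1) \subset \binom{[2,n]}{k-1}, \quad \bb := \ff^*(\overline{1}) \subset \binom{[2,n]}{k}.
\]
Left-compression forces $\bb \subseteq \partial_u \aaa$: for each $B \in \bb$, replacing $\min B$ by $1$ must yield a member of $\ff^*$, so $B \setminus \{\min B\} \in \aaa$ and hence $B \in \partial_u \aaa$. Partitioning $\partial_u \ff^*$ by whether an element contains $1$ (sets containing $1$ come from $\partial_u \aaa \cup \bb$, sets avoiding $1$ are exactly $\partial_u \bb$) gives
\[
    |\partial_u \ff^*| \; = \; |\partial_u \aaa \cup \bb| + |\partial_u \bb| \; = \; |\partial_u \aaa| + |\partial_u \bb|,
\]
the second equality using $\bb \subseteq \partial_u \aaa$. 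The induction hypothesis on the ground set $[2,n]$ lower-bounds each summand by the upper-shadow size of the lex initial segment of the same cardinality and uniformity.

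\textbf{Step 3 (matching the lex initial segment in $\binom{[n]}{k}$).} It remains to match the bound from Step 2 against $|\partial_u I|$, where $I$ is the lex initial segment of $\binom{[n]}{k}$ of size $|\ff^*|$. Observe that $I$ first exhausts all sets containing $1$ (an initial segment of $\binom{[2,n]}{k-1}$ prefixed by $\{1\}$), and only after that includes sets avoiding $1$ (an initial segment of $\binom{[2,n]}{k}$). One must therefore show that this particular decomposition $|\ff^*| = s + t$ minimizes the two-variable sum $|\partial_u L^{k-1}_s| + |\partial_u L^{k}_t|$ over all valid splits, where $L^{j}_r$ denotes the lex initial segment of $\binom{[2,n]}{j}$ of size $r$. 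The cleanest way to do this is the Kruskal cascade representation $|\ff^*| = \binom{a_k}{k} + \binom{a_{k-1}}{k-1} + \cdots$: the split coming from $I$ corresponds exactly to peeling off the top cascade term (filling in sets containing $1$ first), and a monotonicity lemma for $r \mapsto |\partial_u L^{j}_r|$ along initial segments completes the comparison.

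\textbf{Main obstacle.} Steps 1 and 2 are routine: the shift lemma and the decomposition $|\partial_u \ff^*| = |\partial_u \aaa| + |\partial_u \bb|$ for left-compressed families are clean. The genuinely delicate part is Step 3, namely verifying that the split dictated by the lex initial segment is globally optimal, since the induction hypothesis only bounds $|\partial_u \aaa|$ and $|\partial_u \bb|$ individually and one must argue that no other $(s,t)$ with $s + t = |\ff^*|$ yields a strictly smaller total. The cascade bookkeeping resolves this in one stroke by making the lower bound tight, but it is the step that warrants the most care.
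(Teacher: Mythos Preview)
The paper does not give its own proof of this theorem: it is quoted as a tool, with the remark that the upper-shadow form follows from the standard lower-shadow Kruskal--Katona theorem by passing to complements, together with a pointer to Frankl's shifting survey. Your plan is exactly that compression proof, carried out directly for the upper shadow instead of via complements, so at the level of strategy there is nothing to compare. One small slip in Step~1: your potential $w(\ff)=\sum_{F\in\ff}\sum_{x\in F}2^{n-x}$ \emph{increases} under a nontrivial $S_{ij}$ with $i<j$ (you replace $2^{n-j}$ by the larger $2^{n-i}$); use $\sum_F\sum_{x\in F}x$ instead, or argue termination via a bounded strictly monotone potential in either direction.

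There is, however, a real gap in Step~3 beyond what you already flag. Writing $g_j(r)=|\partial_u L^j_r|$, after the induction hypothesis you have $|\partial_u\ff^*|=|\partial_u\aaa|+|\partial_u\bb|\ge g_{k-1}(|\aaa|)+g_k(|\bb|)$, and you propose to show that the lex split $(s^*,t^*)$ minimizes $g_{k-1}(s)+g_k(t)$ over ``valid'' $s+t=|\ff^*|$. But unconstrained this sum typically \emph{decreases} when mass is shifted from $s$ to $t$, so the lex split (which maximizes $s$) is not the minimizer, and your sketch never says what ``valid'' means nor how it is enforced. What actually rescues the argument is the extra inequality $|\partial_u\aaa|\ge|\bb|$ from $\bb\subseteq\partial_u\aaa$, which you proved in Step~2 but do not invoke in Step~3. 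The standard completion splits into two cases: if $|\bb|\le g_{k-1}(|\aaa|)$ one applies the cascade inequality $g_{k-1}(s)+g_k(t)\ge g_{k-1}(s+t)$, valid for $t\le g_{k-1}(s)$; if $|\bb|>g_{k-1}(|\aaa|)$ one instead uses $|\partial_u\aaa|\ge|\bb|$ directly to get $|\partial_u\ff^*|\ge|\bb|+g_k(|\bb|)$, and a second cascade inequality finishes. A bare monotonicity statement for $r\mapsto g_j(r)$ supplies neither of these.
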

The usual form of the theorem provides a bound on the lower shadow $\partial_l \ff = \{F \in \binom{[n]}{k - 1} \mid \exists F' \in \ff \text{ s.t. } F \subset F'\}$. The formulation above can be easily derived by passing to set complements. For the proof, one can see, for example, Peter Frankl's survey~\cite{frankl_shifting_1987}. 

\subsection{Lemmas from part I~\cite{KupaNos1}}

Below, we list the results from our previous paper that we employ.

\begin{lemma}[\cite{KupaNos1}, Section 6.4]
\label{lemma: extremal exmaple asymptotics}
If there are subfamilies $\ff_k'$ of extremal families $\ff_k$ such that 
\begin{enumerate}[label=(\roman*)]
    \item $|\ff_k'| \ge (1 - \delta_n) |\ff_k|$ for each $k$ and some $\delta_n$, 
    \item each $x \in [n]$ lies in at most 2 subfamilies.
\end{enumerate}
Then
\begin{align*}
    \targetFunction = 
    \left (1 + O \left ( \delta_n \right ) \right ) \cdot
    2^n\cdot
    \prod_{S\in{[\ell]\choose 2}}
        \bigg(
            \frac 1{\mSet_S!}
            \Big(
                \frac{\mSet_S \cdot n}{
                    \sum_{S'\in {[\ell]\choose 2}}
                        \mSet_{S'}
                }
            \Big)^{\mSet_S}
        \bigg).
\end{align*}
\end{lemma}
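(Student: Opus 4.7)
The plan is to use condition~(ii) to partition $[n]$ according to which subfamilies cover each vertex, and then to reduce the $\ell$-family problem on each pair-part to Frankl's two-family bound $s^*(N,2,t)=2^{N}\binom{N}{\le t}$. The main obstacle is to verify that this pair-by-pair reduction loses nothing asymptotically; this turns out to be automatic once we exploit condition~(ii).

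For each $k\in[\ell]$ let $T_k:=\bigcup_{F\in\ff_k'}F$ be the support of $\ff_k'$. By~(ii), every $x\in[n]$ lies in at most two of the $T_k$'s, so the sets $A_{k,k'}:=T_k\cap T_{k'}$ for $k\ne k'$ and $A_k:=T_k\setminus\bigcup_{k'\ne k}T_{k'}$, together with $[n]\setminus\bigcup_k T_k$, form a partition of $[n]$. Every $F\in\ff_k'$ is contained in $T_k=A_k\sqcup\bigsqcup_{k'\ne k}A_{k,k'}$, so the map $F\mapsto\bigl(F\cap A_k,(F\cap A_{k,k'})_{k'\ne k}\bigr)$ is injective, yielding $|\ff_k'|\le 2^{|A_k|}\prod_{k'\ne k}\bigl|\ff_k'|_{A_{k,k'}}\bigr|$. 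Multiplying over $k$ and regrouping by unordered pairs gives
\[
    \prod_{k=1}^{\ell}|\ff_k'|\le 2^{\sum_k|A_k|}\prod_{\{k,k'\}}\bigl|\ff_k'|_{A_{k,k'}}\bigr|\cdot\bigl|\ff_{k'}'|_{A_{k,k'}}\bigr|.
\]
For each pair $\{k,k'\}$ and $F_1\in\ff_k',F_2\in\ff_{k'}'$, the inclusion $F_1\cap F_2\subset T_k\cap T_{k'}=A_{k,k'}$ shows that $\ff_k'|_{A_{k,k'}}$ and $\ff_{k'}'|_{A_{k,k'}}$ still satisfy the $\mSet_{k,k'}$-overlapping property inside $2^{A_{k,k'}}$, so Frankl's bound yields $\bigl|\ff_k'|_{A_{k,k'}}\bigr|\cdot\bigl|\ff_{k'}'|_{A_{k,k'}}\bigr|\le 2^{|A_{k,k'}|}\binom{|A_{k,k'}|}{\le\mSet_{k,k'}}$. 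Substituting back, using $\sum_k|A_k|+\sum_{\{k,k'\}}|A_{k,k'}|\le n$, the asymptotic $\binom{N}{\le m}=(1+O(1/N))N^m/m!$, and the fact that the maximum of $\prod_S|A_S|^{\mSet_S}$ under $\sum_S|A_S|\le n$ is attained (by Lagrange) at $|A_S|=\mSet_S n/M$ with $M=\sum_{S'}\mSet_{S'}$, we obtain
\[
    \prod_k|\ff_k'|\le\Bigl(1+O\bigl(\tfrac{1}{n}\bigr)\Bigr)\cdot 2^n\prod_{S\in\binom{[\ell]}{2}}\frac{1}{\mSet_S!}\Bigl(\frac{\mSet_S n}{M}\Bigr)^{\mSet_S}.
\]

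Finally, condition~(i) gives $|\ff_k|\le(1-\delta_n)^{-1}|\ff_k'|$ for each $k$; since $\ell$ is fixed, the product over $k$ introduces only a factor $(1-\delta_n)^{-\ell}=1+O(\delta_n)$, yielding the claimed upper bound on $\targetFunction$. The matching lower bound (with error $O(1/n)$) is supplied by the explicit construction already used for Theorem~\ref{theorem: maximal families product}: fix a tournament $T_\ell$, pick integer parts $|A_S|=\lfloor\mSet_S n/M\rfloor+O(1)$, and set $\ff_k=2^{\bigsqcup_{k'\in In_k}A_{k',k}}\vee\bigvee_{k'\in Out_k}\binom{A_{k,k'}}{\le\mSet_{k,k'}}$; one verifies directly that this family satisfies the $\mSet$-overlapping property and attains $\prod_k|\ff_k|=2^n\prod_S\binom{|A_S|}{\le\mSet_S}$. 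Combining the two bounds gives the asymptotic with relative error $O(\max(\delta_n,1/n))$, which coincides with the claimed $O(\delta_n)$ in the relevant regime $\delta_n\gtrsim 1/n$.
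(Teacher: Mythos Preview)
Your argument is correct and reaches the same key inequality
\[
    \prod_{k\in[\ell]}|\ff_k'|\le 2^n\prod_{S\in\binom{[\ell]}{2}}\binom{|A_S|}{\le \mSet_S},
\]
after which the optimization over the $|A_S|$'s and the factor $(1-\delta_n)^{-\ell}$ finish the upper bound exactly as in the paper. Your observation at the end, that the claimed $O(\delta_n)$ error tacitly assumes $\delta_n\gtrsim 1/n$ because of the lower-bound construction, is also accurate and worth making explicit.

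The route, however, differs from the paper's. In \cite{KupaNos1} (and as rehearsed in the present paper, e.g.\ in the derivation of \eqref{eq: rinot-saks for tentacles}), the bound is obtained in one stroke from the Rinott--Saks correlation inequality (Corollary~\ref{corollary: rinott sets theorem}): condition~(ii) forces $\bigwedge_{s\in S}\ff_s'=\{\varnothing\}$ for $|S|\ge 3$, and the disjointness of the supports of $\ff_k'\wedge\ff_{k'}'$ over distinct pairs $\{k,k'\}$ turns the $j=2$ factor into $\prod_{\{k,k'\}}|\ff_k'\wedge\ff_{k'}'|\le\prod_{\{k,k'\}}\binom{|A_{k,k'}|}{\le\mSet_{k,k'}}$. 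You instead slice each $\ff_k'$ along the partition of its support and then apply only Frankl's two-family bound $s^*(N,2,t)=2^N\binom{N}{\le t}$ on each block $A_{k,k'}$. This is more elementary---it uses only the Harris--Kleitman case rather than the full $\ell$-family correlation inequality---at the cost of the extra (easy) product bound $|\ff_k'|\le 2^{|A_k|}\prod_{k'\ne k}\bigl|\ff_k'|_{A_{k,k'}}\bigr|$. Both arguments are equally tight here; the Rinott--Saks route is shorter to write, while yours makes clearer that nothing beyond the $\ell=2$ case is actually needed once condition~(ii) is in hand.
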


In the paper, this lemma was not stated as it is provided above. However, its proof completely coincides with the proof of the upper bound in Theorem~2, proposed in Section 6.4.

In what follows, we build the required families $\ff_k'$ and thus obtain more precise asymptotics of $\targetFunction$. Also, we need the families $\ff_k'$ that we constructed in our previous paper.

\begin{lemma}[\cite{KupaNos1}, Lemma 21]
\label{lemma: element excluding}
Let $\ff_1, \ldots, \ff_\ell$ be the families from an extremal example. Then there are subfamilies $\ff_k' \subset \ff_k$, $k \in [\ell]$ such that
\begin{enumerate}
    \item Every element $x \in [n]$ is contained in at most two subfamilies $\ff_{k}'$'s.
    \item For every $k\in[\ell]$ it holds that $|\ff'_k| \ge (1 - \delta_n) |\ff_k|$, where $\delta_n = O(n^{-1/2})$.
\end{enumerate}
\end{lemma}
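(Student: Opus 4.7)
The plan is to construct $\ff_k'$ by an assignment of each element $x \in [n]$ to at most two indices in $[\ell]$, after which $\ff_k'$ consists of those $F \in \ff_k$ all of whose elements are assigned to $k$. Property (1) is then automatic, and all the effort goes into choosing the assignment so that property (2) holds.

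For the natural assignment, put $p_k(x) = |\ff_k(x)|/|\ff_k|$ and let $\pi(x) \subseteq [\ell]$ consist of the (at most) two indices $k$ maximizing $p_k(x)$; with $\ff_k' := \{F \in \ff_k : k \in \pi(x) \text{ for every } x \in F\}$, a union bound gives
\[
    |\ff_k \setminus \ff_k'| \le |\ff_k| \sum_{x : k \notin \pi(x)} p_k(x),
\]
so property (2) reduces to showing $\sum_{x : k \notin \pi(x)} p_k(x) = O(n^{-1/2})$ for each $k$.

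The main technical ingredient is Theorem~\ref{theorem: maximal families product}, applied to the restrictions at each element. Fix $x$ and expand
\[
    \prod_{k \in [\ell]} |\ff_k| = \sum_{T \subseteq [\ell]} \prod_{k \in T} |\ff_k(x)| \cdot \prod_{k \notin T} |\ff_k(\overline{x})|.
\]
For any $T \subseteq [\ell]$ with $|T| \ge 3$ and $\mSet_{k,k'} \ge 1$ on every pair of $T$, the families $\{\ff_k(x)\}_{k \in T} \cup \{\ff_k(\overline{x})\}_{k \notin T}$ live on $[n] \setminus \{x\}$ and satisfy the overlapping property with the entries of $\mSet$ indexed by $\binom{T}{2}$ each decreased by $1$. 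By Theorem~\ref{theorem: maximal families product} applied at $n-1$ with this modified vector, the $T$-term is at most $s^*(n-1, \ell, \mSet_T)$, and a direct computation gives $s^*(n-1, \ell, \mSet_T) = O(n^{-\binom{|T|}{2}}) \cdot s^*(n, \ell, \mSet)$. Combining these per-$x$ bounds with the extremality relation $\prod_k |\ff_k| = s^*(n, \ell, \mSet)$ and the $(1 + O(n^{-1/2}))$ slack afforded by Theorem~\ref{theorem: maximal families product} forces the total triple-coverage mass $\sum_{x \in [n]} \sum_{|T| \ge 3} \prod_{k \in T} p_k(x) \prod_{k \notin T}(1 - p_k(x))$ to be $O(n^{-1/2})$. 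Whenever $k \notin \pi(x)$, the set $\pi(x)$ contains two indices $k_1, k_2$ with $p_{k_1}(x), p_{k_2}(x) \ge p_k(x)$, so the triple $T = \{k, k_1, k_2\}$ contributes at least $p_k(x)^3 \prod_{k' \notin T}(1 - p_{k'}(x))$ to the total mass; a H\"{o}lder/AM--GM step then translates the aggregate bound into the required per-index bound.

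The main obstacle is the amplification step: the naive pointwise bound $O(n^{-3})$ on the triple-coverage mass summed over $T$ only yields aggregate mass $O(n^{-2})$, from which H\"{o}lder delivers just $O(1)$ on the per-index sum. To sharpen this to the claimed $O(n^{-1/2})$, one must exploit that the extremal product matches the idealized leading term to within $(1 + O(n^{-1/2}))$: every triple-covered element reduces the contribution of the ``correct'' two-family term by a visible amount, and these deficits cannot in total exceed the $O(n^{-1/2})$ slack of Theorem~\ref{theorem: maximal families product}. The most delicate step is turning this heuristic into a rigorous quantitative statement, presumably through a convexity analysis of the expansion coefficients, together with a separate treatment of pairs $\{k,k'\}$ with $\mSet_{k,k'} = 0$ (for which $\ff_k$ and $\ff_{k'}$ cannot simultaneously contain $x$ at all, so the obstruction is vacuous).
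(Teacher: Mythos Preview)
The paper does not prove this lemma; it is quoted from Part~I~\cite{KupaNos1}. So there is no proof here to compare against, and your proposal must stand on its own.

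Your assignment scheme---let $\pi(x)$ be the two indices with largest $p_k(x)$ and set $\ff_k' = \{F \in \ff_k : k \in \pi(x)\text{ for all }x \in F\}$---is the natural construction, and the union-bound reduction to showing $\sum_{x:\,k\notin\pi(x)} p_k(x) = O(n^{-1/2})$ is correct. The gap is in how you obtain that sum bound. Even granting that the aggregate triple-coverage mass is $O(n^{-2})$ (which already follows pointwise from Proposition~\ref{proposition: multiplicative property of degrees} applied with $|K|=3$, without any slack argument), your H\"older step with exponent $3$ over at most $n$ terms yields only
\[
\sum_{x:\,k\notin\pi(x)} p_k(x) \;\le\; n^{2/3}\Big(\sum_x p_k(x)^3\Big)^{1/3} \;=\; n^{2/3}\cdot O(n^{-2/3}) \;=\; O(1),
\]
not $O(n^{-1/2})$. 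The per-$x$ expansion $1 = \sum_T \prod_{k\in T} p_k(x)\prod_{k\notin T}(1-p_k(x))$ is an exact identity, so there is no extra room to squeeze here in the way you describe, and your ``convexity analysis'' remains a promissory note at precisely the crucial step.

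What is actually needed is a sharper \emph{pointwise} bound: $p_k(x) = O(n^{-3/2})$ whenever $k\notin\pi(x)$. This requires showing that every $x$ has a dominant index $k_1(x)$ with $p_{k_1(x)}(x) = \Omega(1)$---the same swap argument as in the proof of Lemma~\ref{lemma: covering by centers} (which uses only Proposition~\ref{proposition: multiplicative property of degrees} and extremality, hence no circularity): if $\max_k p_k(x) < 1/3$, replacing $\ff_{k_1}$ by $\ff_{k_1}(\bar x) \cup \{F\cup\{x\}: F\in \ff_{k_1}(\bar x)\}$ and each other $\ff_{k'}$ by $\ff_{k'}(\bar x)$ strictly increases the product. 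Once $p_{k_1}(x)\ge c > 0$, Proposition~\ref{proposition: multiplicative property of degrees} with $K=\{k_1,k_2,k\}$ gives $p_{k_2}(x)\,p_k(x) = O(n^{-3})$, so $p_k(x)^2 = O(n^{-3})$ for the third-and-lower indices. Summing over $x$ then gives $n\cdot O(n^{-3/2}) = O(n^{-1/2})$ directly, with no averaging inequality needed.
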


We will be extensively working with the degrees of $\ff_1,\ldots,\ff_\ell$. We use the notion of the {\it normalized degree} of an element, defined as follows:
\begin{align*}
    d(x, \ff) = \frac{
        |\ff(x)|
    }{
        |\ff|
    }.
\end{align*}
 For simplicity, we write $d_k(x)$ instead of $d(x, \ff_k)$. The degree $d_k(F)$ of a set $F$ is defined analogously.

In addition, we describe the following property of degrees:

\begin{proposition}[\cite{KupaNos1}, Proposition 20]
\label{proposition: multiplicative property of degrees}
Let $\ff_1, \ldots, \ff_\ell$ be families from the extremal example. Then for any subset of indices of $K \subset [\ell]$ and sets $F_k \in \ff_k$, $k \in K$
\begin{align*}
    \prod_{k \in K} d_k(F_k) \le C_D 
        n^{- \sum_{\{k, k'\} \in \binom{K}{2}} |F_k \cap F_{k'}| },
\end{align*}
where $C_D$ is some constant depending on $\mSet$, $\ell$.
\end{proposition}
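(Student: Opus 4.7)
The plan is direct: for each $k\in K$, restrict $\ff_k$ to the sets containing $F_k$, then ``contract away'' the pairwise intersections $F_k\cap F_{k'}$ to obtain new families whose overlapping thresholds are strictly smaller, and finally compare against $s^*$ evaluated at the reduced thresholds via the asymptotics of Theorem~\ref{theorem: maximal families product}.

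Concretely, for $k\in K$ set $\widetilde{\ff}_k:=\{G\in\ff_k: F_k\subset G\}$, so that $|\widetilde{\ff}_k|=d_k(F_k)\,|\ff_k|$, and define
\[
    T_k := \bigcup_{k'\in K\setminus\{k\}}(F_k\cap F_{k'}) \subset F_k,\qquad \mathcal{H}_k := \{G\setminus T_k : G\in\widetilde{\ff}_k\}.
\]
Because $T_k\subset F_k\subset G$ for every $G\in\widetilde{\ff}_k$, the map $G\mapsto G\setminus T_k$ is a bijection and $|\mathcal{H}_k|=d_k(F_k)\,|\ff_k|$. For $k\notin K$ I put $\mathcal{H}_k:=\ff_k$. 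The key claim is that $\mathcal{H}_1,\dots,\mathcal{H}_\ell$ satisfy the $\mSet'$-overlapping property, with $\mSet'_{\{k,k'\}}=\mSet_{\{k,k'\}}-|F_k\cap F_{k'}|$ for $\{k,k'\}\in\binom{K}{2}$ and $\mSet'_{\{k,k'\}}=\mSet_{\{k,k'\}}$ otherwise. Indeed, for $G\in\widetilde{\ff}_k,\;G'\in\widetilde{\ff}_{k'}$ with $\{k,k'\}\in\binom{K}{2}$, one has $F_k\cap F_{k'}\subset G\cap G'$ and $F_k\cap F_{k'}\subset T_k\cap T_{k'}$, hence
\[
    |(G\setminus T_k)\cap(G'\setminus T_{k'})| \;=\; |G\cap G'| - |(G\cap G')\cap(T_k\cup T_{k'})| \;\le\; \mSet_{\{k,k'\}} - |F_k\cap F_{k'}|;
\]
the mixed pairs $k\in K,\,k'\notin K$ are controlled by the trivial bound $|(G\setminus T_k)\cap G'|\le|G\cap G'|\le\mSet_{\{k,k'\}}$.

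By the definition of $s^*$, $\prod_k|\mathcal{H}_k|\le s^*(n,\ell,\mSet')$. Combining this with $|\mathcal{H}_k|=d_k(F_k)\,|\ff_k|$ for $k\in K$ and the extremality identity $\prod_k|\ff_k|=s^*(n,\ell,\mSet)$ yields
\[
    \prod_{k\in K} d_k(F_k) \;\le\; \frac{s^*(n,\ell,\mSet')}{s^*(n,\ell,\mSet)}.
\]
Theorem~\ref{theorem: maximal families product} identifies the $n$-dependence of $s^*(n,\ell,\mSet)$ as $n^{\sum_S\mSet_S}$ (up to a multiplicative constant depending only on $\ell,\mSet$), and $\sum_S\mSet'_S=\sum_S\mSet_S-T$ with $T=\sum_{\{k,k'\}\in\binom{K}{2}}|F_k\cap F_{k'}|$. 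Hence the ratio above is $O(n^{-T})$, and the implicit constant $C_D=C_D(\mSet,\ell)$ can be taken uniform over the $F_k$ because $\mSet'$ ranges over a finite set as the $F_k$ vary. The only delicate point I anticipate is the overlap verification in the middle step: it is essential to define $T_k$ as the \emph{full union} $\bigcup_{k'\in K\setminus k}(F_k\cap F_{k'})$ rather than any single $F_k\cap F_{k'}$, so that $F_k\cap F_{k'}$ is guaranteed to be removed by $T_k$ from the $k$-side (and symmetrically by $T_{k'}$ from the $k'$-side), which is precisely what lets the discount $-|F_k\cap F_{k'}|$ appear in the new threshold for every pair in $\binom{K}{2}$ simultaneously.
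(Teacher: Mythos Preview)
Your argument is correct. The reduction---restricting each $\ff_k$ to the supersets of $F_k$, deleting the pairwise intersections $T_k=\bigcup_{k'\in K\setminus\{k\}}(F_k\cap F_{k'})$, and observing that the resulting families are $\mSet'$-overlapping with $\mSet'_S=\mSet_S-|F_k\cap F_{k'}|$---is exactly the right idea, and your verification of the overlap drop is clean (the point that $T_k$ must be the full union over $k'\in K\setminus\{k\}$ is indeed what makes every pairwise discount appear simultaneously). Comparing $\prod_k|\mathcal H_k|\le s^*(n,\ell,\mSet')$ with the extremality $\prod_k|\ff_k|=s^*(n,\ell,\mSet)$ and reading off the $n$-power from Theorem~\ref{theorem: maximal families product} gives the desired $O(n^{-T})$; the uniformity of $C_D$ follows, as you note, because only finitely many $\mSet'$ arise.

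As for comparison: this paper does not actually prove the proposition---it is quoted from Part~I (\cite{KupaNos1}, Proposition~20) and used here as a black box. Your self-contained derivation via the ratio $s^*(n,\ell,\mSet')/s^*(n,\ell,\mSet)$ is the natural route and is presumably close in spirit to the original; there is nothing to correct.
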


Finally, we take the following point of view on our problem developed  in our previous work. For each family $\ff_k$ we construct the following hypergraph:
\begin{align*}
    H_k = \left (
        [n],
        \bigcup_{k' \in [\ell] \setminus \{k\}}
            \left (\ff_{k'} \right )^{(\mSet_{k, k'} + 1)}
    \right ),
\end{align*}
where $\ff^{(t)}$ stand for $\ff \cap \binom{[n]}{t}$.
We call a subset $I$ of $[n]$ \textit{independent} in the hypergraph $H_k$ if it does not contain any edge of $H_k$. Besides, we call $\ff_k$ \textit{maximal} if we cannot add any subset of $[n]$ without breaking the $\mSet$-overlapping property. If $\ff_k$ is maximal, then the number of independent sets in $H_k$ equals $|\ff_k|$.

\begin{claim}[\cite{KupaNos1}, Claim 15]
\label{claim: hypergraph point of view}
    If $\ff_k$ is maximal then it consists of independent sets in $H_k$ defined above.
\end{claim}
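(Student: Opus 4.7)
The plan is to verify two set inclusions: every member of $\ff_k$ is independent in $H_k$, and, conversely, every independent set of $H_k$ is forced into $\ff_k$ by maximality. Together these say that $\ff_k$ equals the set of independent sets of $H_k$, which in particular gives the count $|\ff_k|$ stated just before the claim.

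For the first inclusion I would argue directly. Take $F \in \ff_k$ and suppose for contradiction that $F$ contains an edge $e$ of $H_k$. By construction of $H_k$, the edge $e$ lies in $\ff_{k'}$ for some $k' \in [\ell] \setminus \{k\}$ with $|e| = \mSet_{k, k'} + 1$. Then
\begin{align*}
    |F \cap e| = |e| = \mSet_{k, k'} + 1 > \mSet_{k, k'},
\end{align*}
contradicting the $\mSet$-overlapping property for the pair $(\ff_k, \ff_{k'})$. Hence $F$ contains no edge of $H_k$.

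For the reverse inclusion the key preliminary observation is that maximal families are downward closed: if $\ff_{k'}$ is maximal and $G \subseteq F' \in \ff_{k'}$, then for every $k'' \neq k'$ and every $H \in \ff_{k''}$ we have $|G \cap H| \le |F' \cap H| \le \mSet_{k', k''}$, so $\ff_{k'} \cup \{G\}$ still satisfies the overlapping property, forcing $G \in \ff_{k'}$. With this in hand, pick any independent set $I \subset [n]$ in $H_k$ and check that $\ff_k \cup \{I\}$ continues to satisfy the $\mSet$-overlapping property: if some $F' \in \ff_{k'}$ satisfied $|I \cap F'| \ge \mSet_{k, k'} + 1$, then any $(\mSet_{k, k'} + 1)$-subset $e \subseteq I \cap F'$ would lie in $\ff_{k'}$ by downward closure, hence would be an edge of $H_k$ contained in $I$, contradicting independence. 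Therefore $\ff_k \cup \{I\}$ is admissible, and maximality of $\ff_k$ forces $I \in \ff_k$.

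I do not anticipate any serious obstacle here; the only mildly delicate point is to notice that the converse direction silently uses maximality of the other families $\ff_{k'}$ (through downward closure), not only maximality of $\ff_k$ itself. Since in the present context all the extremal families are being assumed simultaneously maximal, this comes for free.
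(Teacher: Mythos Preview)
The paper does not supply a proof here; the claim is quoted from the companion paper \cite{KupaNos1} (as Claim~15 there), so there is nothing to compare against directly. Your argument is the standard one and is correct. The forward inclusion follows immediately from the $\mSet$-overlapping condition, and the reverse inclusion goes through exactly as you describe once the other families $\ff_{k'}$ are known to be down-closed.

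Your closing remark is on point and worth keeping explicit: the converse direction genuinely needs down-closedness of the \emph{other} families $\ff_{k'}$, not just maximality of $\ff_k$. Without that assumption the equivalence can fail (e.g.\ with $\ell=2$, $\mSet_{1,2}=0$, and $\ff_2=\{\{1,2\}\}$, the hypergraph $H_1$ has no edges at all, yet $\{1\}$ cannot be added to $\ff_1$). In the paper's setting all extremal families are simultaneously maximal, hence down-closed, so this hypothesis is available for free.
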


In what follows, we shall work with families $\ff_1,\ldots,\ff_\ell$ that maximize the product. Due to extremality, they possess some useful properties, in particular, they must be down-closed.




\section{Sketch of the proof of Theorem~\ref{theorem: maximal families structure}}
\label{section: sketch of the proof}

Throughout this section, we assume that $\ff_k$'s form an extremal example.

The proof strategy of Theorem~\ref{theorem: maximal families structure} is based on a delicate analysis of normalized degrees. In the previous paper~\cite{KupaNos1}, we showed that one can take an inclusion-maximal set $M_k$ from each family $\ff_k$ such that $M_k$ covers $[n]$ almost completely. First, we refine this result. In Section~\ref{subsection: probabilistic centers}, we prove that sets
\begin{align*}
    \probabilisticCenter_k = \left \{ x \mid d_k(x) \ge \frac{1}{2} - \alpha_n \right \}, k \in [\ell],
\end{align*}
where $\alpha_n = O(n^{-1})$ is some function of $n$, lie in $\ff_k$ and partition $[n]$. We showed that if the opposite holds, then one can modify families $\ff_k$ such that the product of their cardinalities will be larger than $\prod_{k \in [\ell]} |\ff_k|$, see the proof of Lemma~\ref{lemma: covering by centers}.

We call $\probabilisticCenter_k$ the {\it probabilistic center of $\ff_k$}. In paper~\cite{KupaNos1}, we stated that extremal families look like octopuses. We briefly discussed this metaphor, but in this work we show that it is indeed true. A probabilistic center $\probabilisticCenter_k$ is the ``body'' of a family $\ff_k$. Meanwhile, the huge part of the paper is dedicated to studying the ``tentacles''. We provide the precise definition of a tentacle a bit later in this section. 

In Section~\ref{subsection: direction of tentacles}, we establish that, between two octopuses, the tentacles are essentially directed only from, say, the first to the second. For example, a green tentacle on Figure~\ref{fig: direction of tentacles} is exponentially small: the normalized degree of $x$ in the green family is  exponentially small. In this section, we only give some intuition behind this fact for $m = 1$.

Consider $\ff_k', k\in [\ell]$, from Lemma~\ref{lemma: element excluding}. In the previous paper~\cite{KupaNos1}, we showed that
\begin{align*}
    s^*(\ell, n, 1) & = (1 + O(n^{-1/2})) \left ( \frac{n}{\binom{\ell}{2}}\right )^{\binom{\ell}{2}} 2^n \\
    & \le \frac{1}{(1 - O(n^{-1/2}))^\ell} \prod_{k \in [\ell]}|\ff_k'| \\
    & \le \frac{2^n}{(1 - O(n^{-1/2}))^\ell} \prod_{S \in \binom{[\ell]}{2}} |\wedge_{s \in S} \ff_{s}'|.
\end{align*}
Since $\wedge_{s \in S} \ff_S'$, $S \in \binom{[\ell]}{2}$, have disjoint supports, we obtain
\begin{align*}
    \left (
        \frac{n}{\binom{\ell}{2}}
    \right )^{\binom{\ell}{2}} \le (1 + O(n^{-1})) \prod_{S \in \binom{[\ell]}{2}} \binom{|\support \wedge_{s \in S} \ff_s' |}{2}.
\end{align*}
Note that $|\support(\wedge_{s \in S} \ff_s')|$ should be linear in $n$ to match the inequality above. Because each element of the ground set is contained in supports of at most $2$ primed families, $\support(\wedge_{s \in S} \ff_s') \subset \cup_{s \in S} \probabilisticCenter_s$. Without loss of generality, consider $S = \{1, 2\}$ and assume $|\support(\ff_1'\wedge \ff_2') \cap \probabilisticCenter_2| = \Theta(n)$. Let $x \in \probabilisticCenter_1$.

\begin{wrapfigure}[14]{l}{0.4\textwidth}
    \centering
    \includegraphics[width=0.35\textwidth]{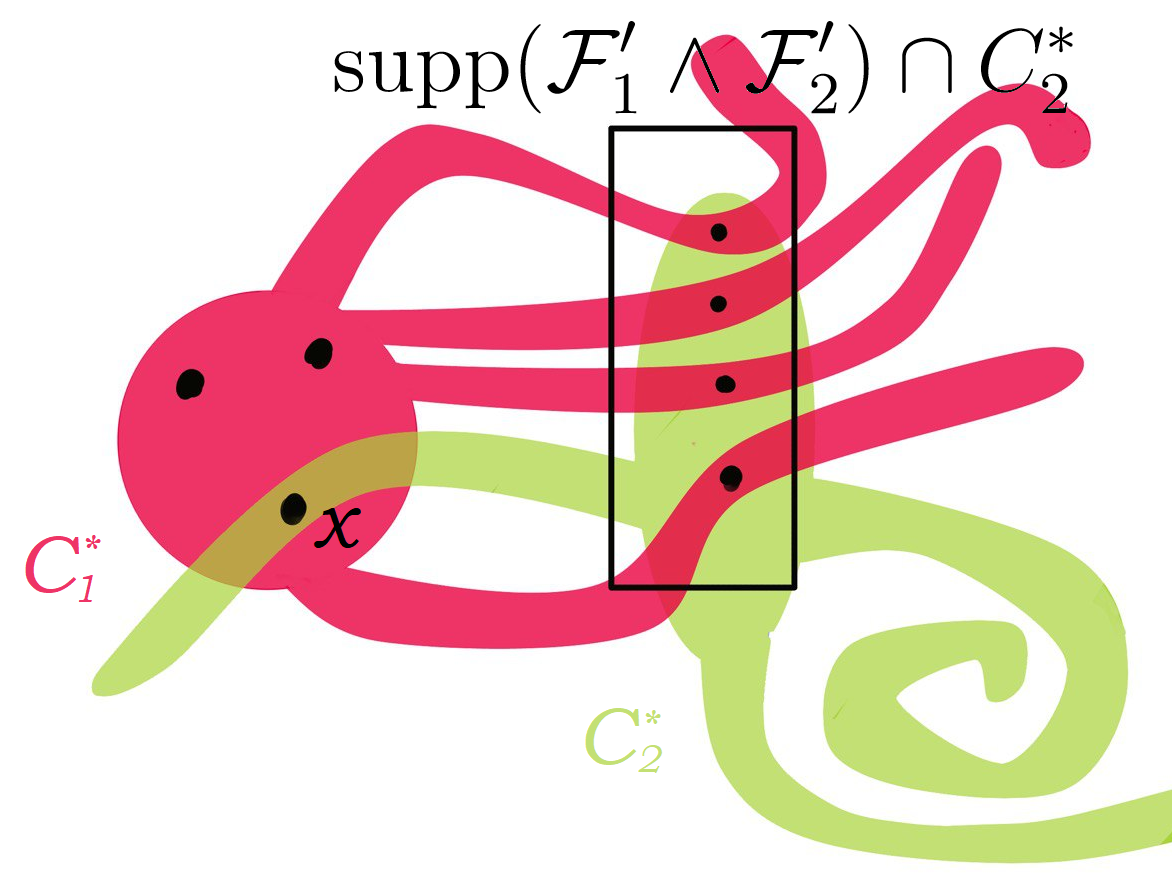}
    \caption{Illustration of the ``tournament'' effect.}
    \label{fig: direction of tentacles}
\end{wrapfigure}

We picture families $\ff_1$ and $\ff_2$ on Figure~\ref{fig: direction of tentacles}. Since $x \in {\color{red} \probabilisticCenter_1}$, for almost all sets $F$ in $\ff_1$ a set $F \cup \{x\} \in {\color{red}\ff_1}$. And almost all subsets of $\color{red} \probabilisticCenter_1$ can be continued by almost any red tentacle. Thus, if a set $F' \in {\color{green}\ff_2}$ contains $x$, then it cannot intersect red tentacles. But the number of red tentacles is $\Theta(n)$! Hence, the number of subsets in $\color{green} \probabilisticCenter_2$ that can be extended by $x$ is $2^{|{\color{green} \probabilisticCenter_2}| - \Theta(n)}$. Consequently, the degree $d_2(x)$ is exponentially small. For a precise argument, see Lemma~\ref{lemma: direction of tentacles}.\footnote{Note that, while the tentacle in the other direction is exponentially small, it does not mean that it is empty. In fact, we can add such sets to the extremal example, which greatly complicates its structure. See Section~\ref{section: graph coloring} for the example.}

Therefore, there is a tournament between ``octopuses'' which is defined by the direction of tentacles. In general, there may be a pair of families $\ff_{k_1}, \ff_{k_2}$ such that $\mSet_{k_1, k_2} = 0$. In that case, the corresponding edges are missing, and the tournament becomes an oriented graph. For a reminder, this is a part of the statement of Theorem~\ref{theorem: maximal families structure}.

Then, a {\it tentacle} of an extremal family $\ff_k$ is a subset $F \subset \bigcup_{k' \in Out_k} \probabilisticCenter_{k'}$ that intersects each $\probabilisticCenter_{k'}, k' \in Out_k$, in at most $\mSet_{k, k'}$ elements. However, we slightly abuse this definition during the proof and define a tentacle as a more specified thing. We hope that the meaning is always clear from the context.

\begin{wrapfigure}[16]{r}{0.45\textwidth}
    \centering
    \includegraphics[width=0.4\textwidth]{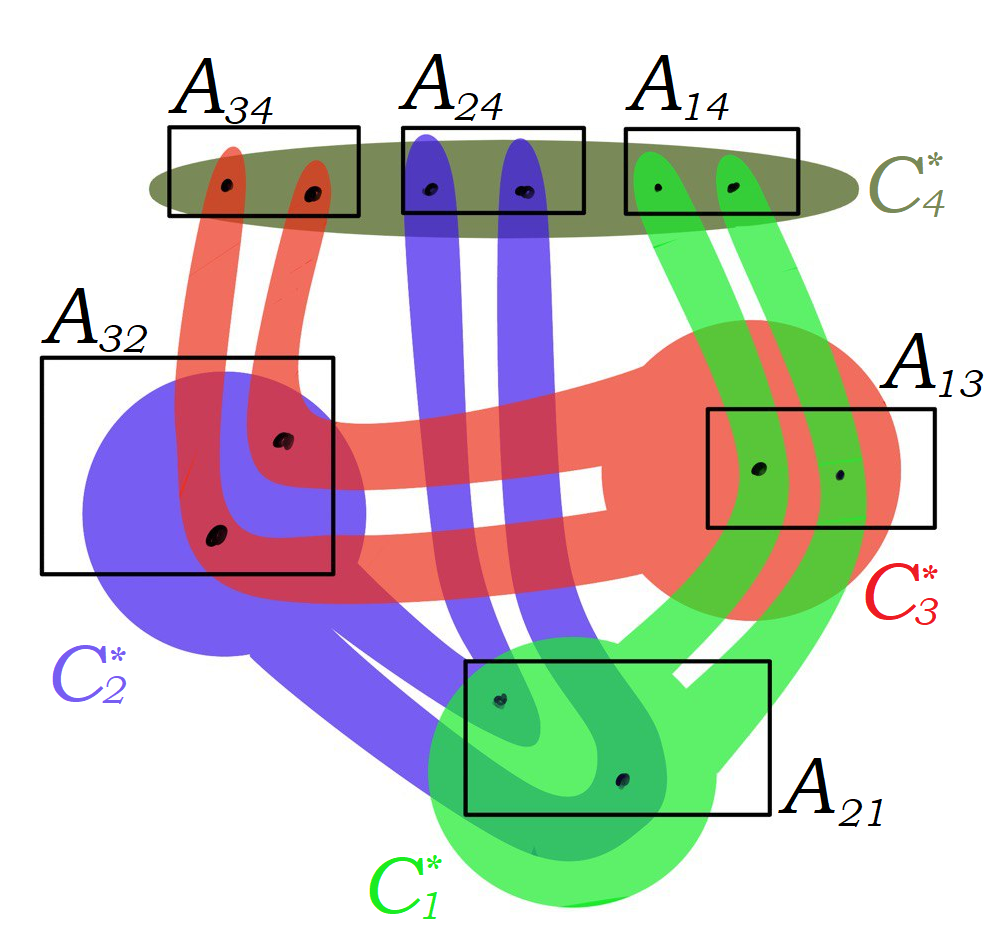}
    \caption{Octopuses described in Section~\ref{section: sketch of the proof}.}
    \label{fig: octopuses}
\end{wrapfigure}

Some decomposition of families into bodies and tentacles is presented on Figure~\ref{fig: octopuses}. Here, we choose $m = 1$ and  $\ell = 4$. Let $A_{ij}$ be from the statement of Theorem~\ref{theorem: maximal families structure} with parentheses omitted. Then, we have 
\begin{align*}
    {\color{green} \ff_1} \simeq 2^{\color{green} \probabilisticCenter_1} \vee \binom{A_{13}}{\le 1} \vee \binom{A_{14}}{\le 1},
\end{align*}
where $\binom{A_{13}}{\le 1} \vee \binom{A_{14}}{\le 1}$ are tentacles of ${\color{green} \ff_1}$. We put ``$\simeq$'' here because there exists a negligibly small fraction of additional sets in ${\color{green} \ff_1}$ that are not captured by this formula. One can also see the tournament structure on Figure~\ref{fig: octopuses}.

The last part of the argument is a symmetrization that turns out to be sophisticated technically, but in its heart is a simple idea inspired by the famous Zykov's symmetrization~\cite{zykov1949some}. Informally, it can be stated as follows: if some tentacle has small normalized degree and some other tentacle has large degree, removing the small tentacle and replacing it with the large tentacle increases the product. We describe the precise argument in Section~\ref{subsection: symmetrization lemma}. For example, the symmetrization argument also implies that each center $\probabilisticCenter_k$ can be partitioned into sets $A_{k', k}, k' \in In_k,$ such that for each $x \in A_{k', k}$ we have
\begin{align*}
    d_{k'}(x) = \Theta(n^{-1}),
\end{align*}
see Section~\ref{subsection: localization of tentacles} for the proof.

Finally, we apply the symmetrization argument as follows. We consider some specified family of tentacles and fix the tentacle $S_0$ of minimal normalized degree in this family. Then, the symmetrization argument implies that the normalized degree of $S_0$ is equal to the average degree of this specified family, up to a factor of $1 - o(1)$. We compute the average degree by some double counting, see Section~\ref{subsection: asymptotics of normalizaed degrees}.  That provides the lower bound on all normalized degrees of the specified family. The upper bound is established by a different, but simple, technique that we do not describe here.

All tentacles can be partitioned into certain specified families such as the mentioned above. And if all tentacles of a family $\ff_k$ have large enough degree, then it should contain an octopus described in the statement of Theorem~\ref{theorem: maximal families structure} as $2^{
            \bigsqcup_{k' \in In_k} 
                A_{\{k', k\}}
        } \vee \bigvee_{k' \in Out_k} \binom{A_{\{k', k\}}}{\le \mSet_{\{k, k'\}}}$.

In what follows, we will not refer to the octopus metaphor except some comments. We use it only to give some intuition. All further formulations are exact.

\section{Proof of Theorem~\ref{theorem: maximal families structure}}
\label{section: further structural results}

We start this section with the following simple claim:
\begin{claim}
\label{claim: degrees consitence}
Consider the families $\ff_1, \ldots, \ff_\ell$ from the extremal example and their arbitrary subfamilies $\ff_k'$. Let $\delta_n$ be such that $|\ff_k'| \ge (1 - \delta_n) |\ff_k|$. Then
\begin{align*}
    \frac{
        |\mathcal H_{k}|
    }{
        |\ff_k|
    }
    - 
    \delta_n
    \leqslant
    \frac{
        |\mathcal{H}_k \cap \ff'_k|
    }{
        |\ff_k'|
    }
    \leqslant
    (1 - \delta_n)^{-1}
    \frac{
        |\mathcal{H}_k|
    }{
        |\ff_k|
    }
\end{align*}
for an arbitrary subfamily $\mathcal H_k$ of $\ff_k$.
\end{claim}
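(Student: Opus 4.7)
The plan is to treat this as an elementary double counting argument; there is essentially no real obstacle, and the two bounds follow from two short estimates on $|\mathcal{H}_k \cap \ff_k'|$ combined with the hypothesis $|\ff_k'|\ge(1-\delta_n)|\ff_k|$.

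For the upper bound, I would start from the trivial inclusion $\mathcal{H}_k \cap \ff_k' \subseteq \mathcal{H}_k$, giving $|\mathcal{H}_k \cap \ff_k'| \le |\mathcal{H}_k|$. Dividing by $|\ff_k'|$ and using $|\ff_k'|\ge(1-\delta_n)|\ff_k|$ yields
\begin{align*}
    \frac{|\mathcal{H}_k \cap \ff_k'|}{|\ff_k'|} \le \frac{|\mathcal{H}_k|}{(1-\delta_n)|\ff_k|} = (1-\delta_n)^{-1}\frac{|\mathcal{H}_k|}{|\ff_k|},
\end{align*}
which is exactly the right-hand inequality.

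For the lower bound, the key observation is $|\ff_k \setminus \ff_k'| = |\ff_k| - |\ff_k'| \le \delta_n |\ff_k|$, and hence $|\mathcal{H}_k \cap \ff_k'| \ge |\mathcal{H}_k| - |\ff_k \setminus \ff_k'| \ge |\mathcal{H}_k| - \delta_n |\ff_k|$. If this quantity is non-negative then, using $|\ff_k'|\le|\ff_k|$ in the denominator, one gets
\begin{align*}
    \frac{|\mathcal{H}_k\cap\ff_k'|}{|\ff_k'|} \ge \frac{|\mathcal{H}_k|-\delta_n|\ff_k|}{|\ff_k|} = \frac{|\mathcal{H}_k|}{|\ff_k|}-\delta_n;
\end{align*}
if it is negative, the desired inequality is trivially true because the left-hand side is non-negative while the right-hand side is strictly less than zero. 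Combining the two cases finishes the lower bound.

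The only thing to be mildly careful about is the direction of the inequality when passing from $|\ff_k'|$ to $|\ff_k|$ in the denominator of the lower bound — one wants $|\ff_k'|\le|\ff_k|$, which is automatic, and one must handle the sign of $|\mathcal{H}_k|-\delta_n|\ff_k|$. Neither of these constitutes a real obstacle, so I expect the proof to fit in a few lines.
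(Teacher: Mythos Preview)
Your proof is correct and follows essentially the same elementary argument as the paper. The only cosmetic difference is that the paper bounds the denominator first (using $|\ff_k'|\le|\ff_k|$ with the nonnegative numerator $|\mathcal H_k\cap\ff_k'|$) and then the numerator, which avoids the explicit sign case split you wrote out; but the content is identical.
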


\begin{proof}
Clearly,
\begin{align*}
    \frac{
        |\mathcal H_k \cap \ff_k'|
    }{
        |\ff_k'|
    }
    =
    \frac{
        |\ff_k|
    }{
        |\ff_k'|
    }
    \frac{
        |\mathcal{H}_k \cap \ff_k'|
    }{
        |\ff_k|
    }
    \le
    (1 - \delta_n)^{-1}
    \frac{
        |\mathcal H_k|
    }{
        |\ff_k|
    }.
\end{align*}
On the other hand,
\begin{align*}
    \frac{
        |\mathcal H_k \cap \ff_k'|
    }{
        |\ff_k'|
    }
    \ge
    \frac{
        |\mathcal H_k| - |\ff_k \setminus \ff_k'|
    }{
        |\ff_k|
    }
    \ge
    \frac{
        |\mathcal H_k|
    }{
        |\ff_k|
    }
    - \delta_n.
\end{align*}
\end{proof}

\subsection{Probabilistic centers}
\label{subsection: probabilistic centers}


\begin{lemma}
\label{lemma: covering by centers}
There is an absolute constant $C$ and $\alpha_n =\frac{C}{n}$ such that the following holds. Define the {\em probabilistic center} of $\ff_k$ as follows.
\begin{align}
    C_k^* = \left \{
        x \mid d_k(x) \ge \frac{1}{2} - \alpha_n
    \right \}.
\end{align}
Then the families $\ff_1, \ldots, \ff_\ell$ from the extremal exmaple satisfy the following properties:
\begin{itemize}
    \item the probabilistic center $C^*_k$ belongs to $\ff_k$ for each $k$,
    \item the probabilistic centers $C^*_k$ are disjoint and cover the ground set $[n]$,
\end{itemize}
if $n$ is large enough.
\end{lemma}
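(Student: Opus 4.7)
The plan is to establish, in order, the following three facts: the probabilistic centers $\probabilisticCenter_k$ are pairwise disjoint; their union covers $[n]$; and each $\probabilisticCenter_k$ belongs to $\ff_k$. I rely throughout on the fact that extremal families are down-closed (hence $d_k(x)\le 1/2$ for every $x,k$), together with the multiplicative degree inequality of Proposition~\ref{proposition: multiplicative property of degrees}.

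The first claim is immediate: if some $x$ lay in $\probabilisticCenter_{k_1}\cap \probabilisticCenter_{k_2}$ for $k_1\neq k_2$, Proposition~\ref{proposition: multiplicative property of degrees} applied with $K=\{k_1,k_2\}$ and $F_{k_1}=F_{k_2}=\{x\}$ would yield $(1/2-\alpha_n)^{2}\le d_{k_1}(x)\,d_{k_2}(x)\le C_D/n$, which is impossible for $n$ large. (In the degenerate case $\mSet_{k_1,k_2}=0$, the two families are forced to have disjoint supports and the conclusion is already clear.)

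For coverage, I argue by contradiction. Assume some $x\in[n]$ satisfies $d_k(x)<1/2-\alpha_n$ for every $k$, and define
\begin{align*}
    \widetilde{\ff}_{1} \;:=\; \ff_1(\overline{x})\cup \{F\cup\{x\} \mid F\in \ff_1(\overline{x})\},\qquad \widetilde{\ff}_{k} \;:=\; \ff_k(\overline{x}) \ \text{ for } k\ge 2.
\end{align*}
Since $x$ appears only in sets of $\widetilde{\ff}_1$, this new tuple still satisfies the $\mSet$-overlapping property, and a direct count gives
\begin{align*}
    \prod_{k}|\widetilde{\ff}_k| \;=\; 2\prod_{k}\bigl(1-d_k(x)\bigr)\prod_{k}|\ff_k|.
\end{align*}
Extremality of $(\ff_k)_k$ therefore forces $\prod_k(1-d_k(x))\le 1/2$. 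To translate this into the required $O(n^{-1})$ lower bound on $d_{\tilde k}(x)$, where $\tilde k := \arg\max_k d_k(x)$, I apply Proposition~\ref{proposition: multiplicative property of degrees} in two passes. A first pass, using only that at most one $d_k(x)$ can exceed $\sqrt{C_D/n}$, yields $d_{\tilde k}(x)\ge 1/2-O(n^{-1/2})$; feeding this back into the sharper bound $d_k(x)\le C_D/(n\,d_{\tilde k}(x))$ valid for $k\neq \tilde k$ gives $\prod_{k\neq \tilde k}(1-d_k(x))\ge 1-O(1/n)$ and hence $d_{\tilde k}(x)\ge 1/2-C(\ell,C_D)/n$. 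Choosing the constant $C$ in $\alpha_n=C/n$ larger than this threshold contradicts the assumption $d_{\tilde k}(x)<1/2-\alpha_n$.

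Finally, since $\ff_k$ is maximal (Claim~\ref{claim: hypergraph point of view}) and down-closed, showing $\probabilisticCenter_k\in \ff_k$ reduces to verifying that $|\probabilisticCenter_k \cap F'|\le \mSet_{k,k'}$ for every $k'\neq k$ and every $F'\in \ff_{k'}$. Fix $F'$, set $T=\probabilisticCenter_k\cap F'$, and suppose for contradiction $|T|>\mSet_{k,k'}$. Then $|\{F\in\ff_k \mid T\subset F\}|=0$, because any such $F$ would give $|F\cap F'|\ge |T|>\mSet_{k,k'}$. On the other hand, iterated Harris--Kleitman inside the down-closed family $\ff_k$, applied to the increasing events $\{F\ni x\}_{x\in T}$, yields
\begin{align*}
    \frac{|\{F\in\ff_k \mid T\subset F\}|}{|\ff_k|} \;\ge\; \prod_{x\in T} d_k(x) \;\ge\; (1/2-\alpha_n)^{|T|} \;>\;0,
\end{align*}
a contradiction. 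The main obstacle is the bootstrapping in the coverage step: a single application of Proposition~\ref{proposition: multiplicative property of degrees} only gives $O(n^{-1/2})$ precision, and achieving the target $O(n^{-1})$ precision hidden in $\alpha_n$ forces one to re-inject the intermediate bound back into the proposition.
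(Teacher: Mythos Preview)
Your disjointness argument and your coverage argument are both correct and track the paper's proof closely; the two-pass bootstrap you describe is just a repackaging of the paper's case split $d_1(x)<\tfrac13$ versus $d_1(x)\ge\tfrac13$.

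The gap is in your third step. The inequality
\[
\frac{|\{F\in\ff_k : T\subset F\}|}{|\ff_k|}\ \ge\ \prod_{x\in T} d_k(x)
\]
does \emph{not} follow from Harris--Kleitman: that inequality concerns product measure on $\{0,1\}^n$, not the uniform measure on a down-closed family. In fact the inequality is false even under your hypothesis $d_k(x)\ge\tfrac12-\alpha_n$. Take $\ff=2^{[n]}\setminus\{[n]\}$; this is down-closed, every coordinate has $d(x)=\dfrac{2^{n-1}-1}{2^n-1}>\tfrac12-2^{-n}$, yet a direct computation gives
\[
d(\{1,2\})=\frac{2^{n-2}-1}{2^n-1}
\ <\
\Bigl(\frac{2^{n-1}-1}{2^n-1}\Bigr)^{2}=d(1)\,d(2),
\]
since $(2^{n-2}-1)(2^n-1)-(2^{n-1}-1)^2=-2^{n-2}<0$.

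The fix is the elementary induction the paper uses (Claim~\ref{claim: inner set degree}): because $\ff_k$ is down-closed and $d_k(x)\ge\tfrac12-\alpha_n$, the fraction of $F\in\ff_k$ with $F\cup\{x\}\notin\ff_k$ is at most $1-2d_k(x)\le 2\alpha_n$; iterating gives $d_k(T)\ge 2^{-|T|}-2\alpha_n$. This is strictly positive once $|T|\le\mSet_{k,k'}+1$ (and by down-closedness of $\ff_{k'}$ it is enough to rule out such $T$), so the argument goes through.
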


First, we prove a simple but useful claim:
\begin{claim}
\label{claim: inner set degree}
If families $\ff_{1}, \ldots, \ff_\ell$ are extremal, then for any $k \in [\ell]$ and any subset $F$ of $\probabilisticCenter_k$ 
\begin{align}
\label{eq: center joint degree}
    d_k(F) \geqslant 2^{-|F|} - 2 \alpha_n.
\end{align}
\end{claim}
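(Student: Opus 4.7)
The plan is to induct on $|F|$, exploiting the down-closedness of $\ff_k$ guaranteed by extremality (asserted at the end of Section~\ref{section: tools}). The base cases $|F|=0$ and $|F|=1$ are immediate: $d_k(\varnothing)=1\ge 1-2\alpha_n$, and for $F=\{x\}$ with $x\in\probabilisticCenter_k$, the definition of the probabilistic center gives $d_k(x)\ge \tfrac12-\alpha_n\ge 2^{-1}-2\alpha_n$.

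For the inductive step with $|F|=s\ge 2$, I will pick any $x\in F$ and set $F':=F\setminus\{x\}$. The key observation is that the map $\phi\colon G\mapsto G\setminus\{x\}$ is an injection from $A_x:=\{G\in\ff_k:x\in G\}$ into $B_x:=\{G\in\ff_k:x\notin G\}$: indeed $G\setminus\{x\}\in\ff_k$ by down-closedness, and it does not contain $x$. Since $x\in\probabilisticCenter_k$, we have $|A_x|\ge(\tfrac12-\alpha_n)|\ff_k|$ and hence $|B_x|\le(\tfrac12+\alpha_n)|\ff_k|$, so the ``slack'' $|B_x|-|A_x|=|B_x\setminus\phi(A_x)|$ is at most $2\alpha_n|\ff_k|$.

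Next, I restrict $\phi$ to subfamilies of sets containing $F'$. Since $x\notin F'$, $\phi$ sends $(A_x)_{F'}:=\{G\in\ff_k:F\subset G\}$ into $(B_x)_{F'}:=\{G\in\ff_k:F'\subset G,\,x\notin G\}$, and the complement of the restricted image in $(B_x)_{F'}$ is contained in $B_x\setminus\phi(A_x)$, hence has size at most $2\alpha_n|\ff_k|$. Writing $n_A:=|\{G\in\ff_k:G\cap F=A\}|$ for $A\subset F$, this reads $(n_{F'}-n_F)-n_F\le 2\alpha_n|\ff_k|$, i.e.\ $n_F\ge \tfrac12 n_{F'}-\alpha_n|\ff_k|$. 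Dividing by $|\ff_k|$ and plugging in the induction hypothesis $d_k(F')\ge 2^{-(s-1)}-2\alpha_n$ yields
\[
    d_k(F)\ge \tfrac12\, d_k(F')-\alpha_n\ge 2^{-s}-\alpha_n-\alpha_n=2^{-s}-2\alpha_n,
\]
closing the induction with exactly the prescribed constant.

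No further machinery is required beyond down-closedness and the defining inequality of $\probabilisticCenter_k$, so there is no substantial obstacle. The only mildly delicate point is the bookkeeping of constants: the $2\alpha_n$ slack arises by summing the $\alpha_n$ lower deviation of $|A_x|$ with the matching upper deviation of $|B_x|$, and halving in the inductive step leaves precisely the right budget ($\alpha_n+\alpha_n$) to recover the same constant $2\alpha_n$ one level up. The naive alternatives, estimating $d_k(F)$ by a product of single-element degrees or by a union bound on ``some $x\notin G$,'' both overshoot badly (the latter becomes vacuous already at $|F|=2$); routing through the down-closed injection is what makes the inductive step tight.
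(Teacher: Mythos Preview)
Your proof is correct and follows essentially the same approach as the paper's: both induct on $|F|$, use down-closedness to bound the number of sets that cannot be extended by $x$ (your ``slack'' $|B_x\setminus\phi(A_x)|$ is exactly the paper's $f(x)\cdot|\ff_k|$), and obtain the same recursion $d_k(F)\ge\tfrac12 d_k(F')-\alpha_n$. One harmless slip: your written definition $n_A:=|\{G\in\ff_k:G\cap F=A\}|$ should be $n_A:=|\{G\in\ff_k:A\subset G\}|$ to match how you use it in the line $(n_{F'}-n_F)-n_F\le 2\alpha_n|\ff_k|$ and in identifying $n_{F'}/|\ff_k|$ with $d_k(F')$.
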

\begin{proof}
We prove it by induction on the size of $F$. If $|F| = 1$ then \eqref{eq: center joint degree} holds by the definition of the probabilistic center. Otherwise, suppose \eqref{eq: center joint degree} holds for some $F'$, and we want to prove \eqref{eq: center joint degree} for $F = F' \cup \{x\}$. For any $x \in \probabilisticCenter_k$ the fraction of sets $X \in \ff_k$ such that $X \cup \{x\} \not \in \ff_k$ is at most $1 - 2 d_k(x) \leqslant 2 \alpha_n$ since $\ff_k$ is down-closed. Denote this fraction by $f(x)$. It is easy to see that
\begin{align*}
    d_k(F' \cup \{x\}) \geqslant \frac 1 2 (d_k(F') - f(x)) \geqslant 2^{-|F|} - 2 \alpha_n.
\end{align*}
\end{proof}

Now we are ready to prove the lemma.

\begin{proof}[Proof of Lemma~\ref{lemma: covering by centers}]
Due to Claim~\ref{claim: inner set degree}, $\probabilisticCenter_k$ cannot contain any set $G \in \ff_{k'}$, $k' \neq k$ of size at least than $\mSet_{k, k'} + 1$, provided $n$ is sufficiently large. Otherwise, $d_k(G) = 0$ contradicts $d_k(G) \ge 2^{- \mSet_{k, k'} - 1} - 2 \alpha_n$. Using Claim~\ref{claim: hypergraph point of view}, we infer that $\probabilisticCenter_k \in \ff_k$.

From Proposition~\ref{proposition: multiplicative property of degrees} for arbitrary $k_1$, $k_2$ we get
\begin{align}
\label{eq: lemma covering by centers, two families tradeoff}
    d_{k_1}(x)d_{k_2}(x) \le \frac{C_D}{n}.
\end{align}
It implies that probabilistic centers are disjoint.

Finally, we prove that they cover $[n]$ completely. Suppose there is $x$ such that $d_{k}(x) < \frac{1}{2} - \alpha_n$ for all $k$. Without loss of generality, assume that $d_1(x) =\max_k\{d_k(x)\}.$ We are going to show that $\prod_{k=1}^\ell |\mathcal G_k|>\prod_{k=1}^\ell |\mathcal F_k|$, for some $\mathcal G_1, \ldots, \mathcal G_\ell$ that also satisfy the $\mSet$-overlapping property. This will contradict the extremality of $\ff_1,\ldots, \ff_k$. We define $\mathcal G_k$ as follows: $$\mathcal G_1:= \ff_1(\bar x)\cup \{F\cup \{x\}: F\in \ff_1(\bar x)\}$$ and $\mathcal G_k = \ff_k(\bar x)$ for $k\ge 2.$ It is easy to check that $\mathcal G_i$ are indeed $\mSet$-overlapping. Note that $$\prod_{k=1}^\ell |\mathcal G_k| = 2\prod_{k=1}^\ell |\ff_k(\bar x)|$$ and that $$\prod_{k=1}^\ell |\mathcal F_k| = \prod_{k=1}^\ell \frac 1{1-d_k(x)}|\ff_k(\bar x)|.$$ The latter equality follows from $$1 = \frac{|\ff_k(\bar x)|+|\ff_k(x)|}{|\ff_k|} = \frac{|\ff_k(\bar x)|}{|\ff_k|}+d_k(x).$$
Thus, we are left to show that $\prod_{k=1}^\ell \frac 1{1-d_k(x)} < 2.$

If $d_1(x)<\frac 13$ then from~\eqref{eq: lemma covering by centers, two families tradeoff} we get  $d_k(x)\le \sqrt{\frac{C_D}n}$ for each $k\ge 2$. Thus, 
$$\prod_{k=1}^\ell \frac 1{1-d_k(x)}\le \frac 1{1-1/3}\cdot \Big(\frac 1{1-O(n^{-1/2})}\Big)^{\ell-1}<2.$$

If $d_1(x)>\frac 13$ then from~\eqref{eq: lemma covering by centers, two families tradeoff} we get  $d_k(x)\le \frac{3 C_D}n$ for each $k\ge 2$. Thus, 
$$\prod_{k=1}^\ell \frac 1{1-d_k(x)}\le \frac 1{1-d_1(x)}\cdot \Big(\frac 1{1-O(n^{-1})}\Big)^{\ell-1}= (1+O(n^{-1}))\frac 1{1-d_1(x)}.$$
In order for the last expression to be less than $2,$ it is enough to have $d_1(x) = \frac 12 - \alpha_n$ for some specific $\alpha_n = O(n^{-1})$, which proves the lemma.
\end{proof}

From this lemma, we see that any set $F$ from a particular family $\ff_k$ can be decomposed into two parts: one part that lies in the probabilistic center $\probabilisticCenter_k$ and another part ({\it tentacles}) that is formed by intersections with other centers. 

\subsection{Direction of tentacles}
\label{subsection: direction of tentacles}

In this section, we use the following notation for arbitrary families $\hh_{k}$, $k \in [\ell]$ such that $2^{\probabilisticCenter_{k}} \subset \hh_k$:
\begin{align*}
    \hh_{k_1 \setminus k_2} & = \{F \in \hh_{k_1} \mid F \cap \probabilisticCenter_{k_2} = \varnothing \}, \\
    \hh_{k_1 \to k_2} & = \hh_{k_1} \setminus \hh_{k_1 \setminus k_2},
\end{align*}
where $k_1, k_2$ are distinct indices from $[\ell]$. Moreover, we define
\begin{align*}
    d_{k_1 \to k_2}^{(\ge s)} = \frac{1}{|\ff_{k_1}|} |\{F \in \ff_{k_1} \mid |F \cap \probabilisticCenter_{k_2}| \ge s\}|.
\end{align*}

Another important step in the proof of Theorem~\ref{theorem: maximal families structure} is the following.

\begin{lemma}
\label{lemma: direction of tentacles}
Consider extremal families $\ff_1, \ldots, \ff_\ell$ with probabilistic centers $\probabilisticCenter_1, \ldots, \probabilisticCenter_\ell$. Suppose that there are subfamilies $\ff_k' \in \ff_k$, $k \in [\ell],$ such that
\begin{enumerate}[label=(\roman*)]
    \item $|\ff_k'| \ge (1 - \delta_n) |\ff_k|$ for some $\delta_n = O(n^{-1/2})$ and each indices $k \in [\ell]$, \label{tentacle lemma: lower family volume condition}
    \item each element $x \in [n]$ appears in sets from only two of the $\ff_k'$, \label{tentacle lemma: 2-factor structure condition}
    \item $\probabilisticCenter_k \in \ff_k'$. \label{tentacle lemma: probabilistic center presence condition}
\end{enumerate}
Then for any indices $k_1, k_2$ such that $\mSet_{k_1, k_2} > 0$ and some $k_1', k_2'$ such that $\{k'_1, k_2'\} = \{k_1, k_2\}$, we have
\begin{align*}
    d_{k_1' \to k_2'} \ge d_{k_1' \to k_2'}^{(\ge \mSet_{k_1, k_2})} = 1 - O(\max\{n^{-1}, \delta_n\}), \\
    d_{k_2' \to k_1'} = e^{-\Omega(n)}.
\end{align*}
\end{lemma}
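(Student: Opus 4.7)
My plan begins by analysing the structure of the pairwise meets $\ff_{s_1}'\wedge\ff_{s_2}'$. Applying Corollary~\ref{corollary: rinott sets theorem} to $\ff_1',\dots,\ff_\ell'$ and using assumption (ii) of the lemma---that each element of $[n]$ lies in at most two supports $\support\ff_k'$---every triple meet $\ff_{s_1}'\wedge\ff_{s_2}'\wedge\ff_{s_3}'$ for distinct $s_1,s_2,s_3$ lives on the empty common support and hence equals $\{\varnothing\}$. Rinott therefore reduces to
\[
    \prod_{k=1}^\ell |\ff_k'| \le 2^n \cdot \prod_{S\in\binom{[\ell]}{2}} |\ff_{s_1}'\wedge\ff_{s_2}'|.
\]
Set $U_S := \support(\ff_{s_1}'\wedge\ff_{s_2}')$. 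Trivially $|\ff_{s_1}'\wedge\ff_{s_2}'|\le\binom{|U_S|}{\le\mSet_S}$, and by (ii) the $U_S$ are pairwise disjoint, so $\sum_S |U_S|\le n$. Combining these with assumption (i) and the asymptotics of $\targetFunction$ from Theorem~\ref{theorem: maximal families product}, a Lagrange-multiplier optimisation on the budget $\sum_S |U_S|\le n$ pins down
\[
    |U_S| = (1+O(\delta_n+n^{-1/2}))\cdot\frac{n\mSet_S}{\sum_{S'}\mSet_{S'}}, \qquad |\ff_{s_1}'\wedge\ff_{s_2}'| = (1-O(\delta_n+n^{-1/2}))\binom{|U_S|}{\le\mSet_S},
\]
so almost every $\le\mSet_S$-subset of $U_S$ is realised as an intersection $F_1\cap F_2$.

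Second, I would localise $U_S$. If $y\in U_S\cap\probabilisticCenter_k$ for some $k\notin\{s_1,s_2\}$, then (ii) puts $y\notin\support\ff_k'$; consequently every $F\in\ff_k$ containing $y$ lies in $\ff_k\setminus\ff_k'$, giving $d_k(y)\le\delta_n$, which contradicts $d_k(y)\ge 1/2-\alpha_n$. Hence $U_S\subset\probabilisticCenter_{s_1}\cup\probabilisticCenter_{s_2}$, and I write $U_S = U_{S,s_1}\sqcup U_{S,s_2}$ with $U_{S,s_i} := U_S\cap\probabilisticCenter_{s_i}$. I then let $k_2'\in\{s_1,s_2\}$ be the index with $|U_{S,k_2'}| = \max\{|U_{S,s_1}|,|U_{S,s_2}|\}$, and set $k_1'$ to be the other. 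The forward bound $d_{k_1'\to k_2'}^{(\ge\mSet_S)} = 1 - O(\max\{n^{-1},\delta_n\})$ then follows from a double count: by Step~1 a $(1-o(1))$-fraction of $T\in\binom{U_{S,k_2'}}{\mSet_S}$ is realised as $F_1\cap F_2$ with $F_1\in\ff_{k_1'}'$, and averaging over such $T$ forces most $F_1\in\ff_{k_1'}'$ to satisfy $|F_1\cap\probabilisticCenter_{k_2'}|\ge\mSet_S$. The matching upper bound $|F_1\cap\probabilisticCenter_{k_2'}|\le\mSet_S$ is the overlapping property applied with $F_2 = \probabilisticCenter_{k_2'}\in\ff_{k_2'}$.

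The main obstacle is the exponential decay $d_{k_2'\to k_1'} = e^{-\Omega(n)}$. To prove it I would fix $x\in\probabilisticCenter_{k_1'}$ and bound $|\ff_{k_2'}(x)|$. For every $F_1\in\ff_{k_1'}$ with $x\in F_1$ and $F_1\cap\probabilisticCenter_{k_2'} = T$ of size $\mSet_S$, and for every $F\in\ff_{k_2'}$ with $x\in F$, the overlapping property gives $|F\cap T|\le\mSet_S - 1$. Claim~\ref{claim: inner set degree} applied inside $\probabilisticCenter_{k_1'}$ together with the forward structure from the previous paragraph should yield a family of $\Theta(n^{\mSet_S})$ such sets $F_1\ni x$ whose $\mSet_S$-tentacles $T$ are spread throughout a linear-sized portion of $\probabilisticCenter_{k_2'}$; the hard step is then to convert the $\Theta(n^{\mSet_S})$ constraints ``$|F\cap T|\le\mSet_S - 1$'' into a genuine volumetric loss, showing that the number of admissible $F\cap\probabilisticCenter_{k_2'}$ is at most $2^{|\probabilisticCenter_{k_2'}|-\Omega(n)}\cdot\mathrm{poly}(n)$. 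For $\mSet_S = 1$ this is immediate because each $T = \{y\}$ forces $y\notin F$; for $\mSet_S\ge 2$ one would invoke a Kruskal--Katona or sunflower-style argument on the spread tentacle family, which is where I expect the principal technical difficulty to sit. Once the volumetric bound is in hand, using $|\ff_{k_2'}|\ge 2^{|\probabilisticCenter_{k_2'}|}$ (from $2^{\probabilisticCenter_{k_2'}}\subset\ff_{k_2'}$) gives $d_{k_2'}(x)\le e^{-\Omega(n)}$, and a union bound over $x\in\probabilisticCenter_{k_1'}$ finishes the proof.
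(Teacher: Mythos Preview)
Your forward-direction argument in the second paragraph does not go through. From ``a $(1-o(1))$-fraction of $T\in\binom{U_{S,k_2'}}{\mSet_S}$ is realised as some $F_1\cap F_2$'' you cannot deduce that most $F_1\in\ff_{k_1'}'$ satisfy $|F_1\cap\probabilisticCenter_{k_2'}|\ge\mSet_S$. Since $|F_1\cap\probabilisticCenter_{k_2'}|\le\mSet_S$ always (pair $F_1$ with $\probabilisticCenter_{k_2'}\in\ff_{k_2'}$), each $F_1$ with $|F_1\cap\probabilisticCenter_{k_2'}|=\mSet_S$ determines a unique candidate $T=F_1\cap\probabilisticCenter_{k_2'}$; the double count therefore yields only the lower bound $|\{F_1:|F_1\cap\probabilisticCenter_{k_2'}|=\mSet_S\}|\ge\Theta(n^{\mSet_S})$, which is exponentially far from $(1-o(1))|\ff_{k_1'}'|$. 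Note also that your realisation hypothesis is symmetric in $k_1',k_2'$ (almost every $T\subset U_{S,k_1'}$ is equally realised), so the argument, if valid, would establish the forward bound in both directions simultaneously.

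The paper obtains the forward bound by a mechanism you are missing. For $s+t=\mSet_{k_1,k_2}+1$ the subfamilies $\{F\in\ff_{k_1}':|F\cap\probabilisticCenter_{k_2}|<s\}$ and $\{F\in\ff_{k_2}':|F\cap\probabilisticCenter_{k_1}|<t\}$ are only $(\mSet_{k_1,k_2}-1)$-overlapping; inserting them into the $\ell$-fold product and applying Theorem~\ref{theorem: maximal families product} with this reduced parameter gives
\[
\bigl(1-d_{k_1\to k_2}'^{(\ge s)}\bigr)\bigl(1-d_{k_2\to k_1}'^{(\ge t)}\bigr)=O(n^{-1}),
\]
so at least one factor is $1-O(n^{-1/2})$. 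The direction is not read off from the sizes $|U_{S,s_i}|$; it emerges from this dichotomy, and only after the exponential-decay step does a final Rinott comparison pin one of the two thresholds to zero.

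For the exponential decay the paper also proceeds differently, and uniformly in $\mSet_S$. Fixing $T\in\binom{\probabilisticCenter_{k_1'}}{t}$, it samples $\randomSubset\subset\probabilisticCenter_{k_2'}$ uniformly and observes that $\randomSubset\cup T\in\ff_{k_2'}$ forces the random fraction $\mathbf{f}_{k_1'}^{\ge s}:=|\ff_{k_1'}|^{-1}\bigl|\{F\in\ff_{k_1'}:|F\cap\randomSubset|\ge s\}\bigr|$ to be $O(\alpha_n)$ (via Claim~\ref{claim: inner set degree}). But $\EE[\mathbf{f}_{k_1'}^{\ge s}]\ge 2^{-\mSet_S}d_{k_1'\to k_2'}^{(\ge s)}=\Omega(1)$, and Proposition~\ref{proposition: multiplicative property of degrees} bounds each coordinate's influence on $\mathbf{f}_{k_1'}^{\ge s}$ by $O(n^{-1})$; Azuma's inequality (Theorem~\ref{theorem: Hoeffding's iniequality}) then gives $\PP[\randomSubset\cup T\in\ff_{k_2'}]=e^{-\Omega(n)}$. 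Your proposed route via Kruskal--Katona or sunflowers would have to turn ``$\Theta(n^{\mSet_S})$ spread forbidden $\mSet_S$-sets'' into an exponential volumetric cut, which is not a standard deduction; the paper sidesteps this entirely with the martingale argument.
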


\noindent\textit{Proof.}
Consider subfamilies $\ff_k'$, $k \in [\ell]$. According to Theorem~\ref{theorem: maximal families product}, we have
\begin{align*}
    \prod_{k = 1}^\ell |\ff_k'| 
    \ge 
    (1 - \delta_n)^\ell 
    \Omega(n^{\sigma})
    2^n
\end{align*}
for $\sigma = \sum_{S \in \binom{[\ell]}{2}} \mSet_S$. For any $k_1\ne k_2$, sets from the families $\ff_{k_1}'$ and $\ff_{k_2}'$ can  intersect only inside $\probabilisticCenter_{k_1} \sqcup \probabilisticCenter_{k_2}$ due to~\ref{tentacle lemma: 2-factor structure condition} and~\ref{tentacle lemma: probabilistic center presence condition}. Otherwise, there is an element that belongs to three families. 

Consider an arbitrary $s \in \left [\mSet_{k_1, k_2} \right ]$. Define
\begin{align*}
    d_{k_1 \to k_2}'^{(\ge s)} = \frac{
        |\{ F \in \ff_{k_1}' \mid |F \cap \probabilisticCenter_{k_2}| \geqslant s\}|
    }{
        |\ff_{k_1}'|
    }.
\end{align*}
Let $t = \mSet_{k_1, k_2} + 1 - s$. Then families
\begin{align*}
    \ff_{k_1 \setminus k_2}'^{(<s)} &= \{F \in \ff_{k_1}' \mid |F \cap \probabilisticCenter_{k_2}| < s \}, \\
    \ff_{k_2 \setminus k_1}'^{(<t)} &= \{F \in \ff_{k_2}' \mid |F \cap \probabilisticCenter_{k_1}| < t \}
\end{align*}
are at most $\mSet_{k_1, k_2} - 1$-overlapping whenever $\mSet_{k_1, k_2} > 0$. Thus,
\begin{align*}
    \left |\ff_{k_1 \setminus k_2}'^{(<s)} \right |
    \left |\ff_{k_1 \setminus k_2}'^{(<t)} \right | 
    \prod_{k \not \in \{k_1, k_2\}}
         |\ff_k'|
    =
    O (n^{\sigma - 1} 2^n).
\end{align*}
due to Theorem~\ref{theorem: maximal families product}. Since $\left |\ff_{k_1 \setminus k_2}'^{(<s)} \right | = \left (1 - d_{k_1 \to k_2}'^{(\ge s)} \right ) |\ff_{k_1}'|$, we conclude
\begin{align}
\label{eq: direction dilemma}
    \left (
        1 - d_{k_1 \to k_2}'^{(\ge s)}
    \right )
    \left (
        1 - d_{k_2 \to k_1}'^{(\ge t)}
    \right )
    = O(n^{-1}) \cdot (1 - \delta_n)^{-\ell}
\end{align}
which is $O(n^{-1})$ due to conditions of the lemma. In this case, we have
\begin{align*}
    \text{either } &  d_{k_1 \to k_2}'^{(\ge s)} = 1 - O(n^{-1/2})  \\
    \text{or } & d_{k_1 \to k_2}'^{(\ge t)} = 1 - O(n^{-1/2}).
\end{align*}
Without loss of generality, we suppose that the first inequality holds. Using Claim~\ref{claim: degrees consitence}, we get
\begin{align}
\label{eq: expectation s, direction of tentacles}
    d_{k_1 \to k_2}^{(\ge s)} \ge 1 - O(n^{-1/2})
\end{align}
Our next aim is to show that this inequality implies 
\begin{align*}
    d_{k_2 \to k_1}^{(\ge t)} = e^{-\Omega(n)}.
\end{align*}

Fix some $T \in \binom{\probabilisticCenter_{k_1}}{t}$. We have
\begin{align}
    f_{k_1}(T)  & := \frac{
        \left | \{
            F \in \ff_{k_1} \mid F \cup T \not \in \ff_{k_1}
        \} \right |
    }{
        |\ff_{k_1}|
    } = \sum_{T' \subset T} \frac{
        \left | \{ 
            F \in \ff_{k_1}(T', T) \mid F \not \in \ff_{k_1}(T)
        \} \right |
    }{
        |\ff_{k_1}|
    } \nonumber \\
    & = \sum_{T' \subset T} \frac{
        |\ff_{k_1}(T', T)| - |\ff_{k_1}(T)|
    }{
        |\ff_{k_1}|
    } = 1 - 2^{|T|} d_{k_1}(T), \label{eq: f(T) fraction is small}
\end{align}
where the third equality is since $\ff_{k_1}(T) \subset \ff_{k_1}(T', T)$ due to the fact that $\ff_{k_1}$ is down-closed.

\begin{wrapfigure}[14]{r}{0.4\textwidth}
    \centering
    \includegraphics[width=0.35\textwidth]{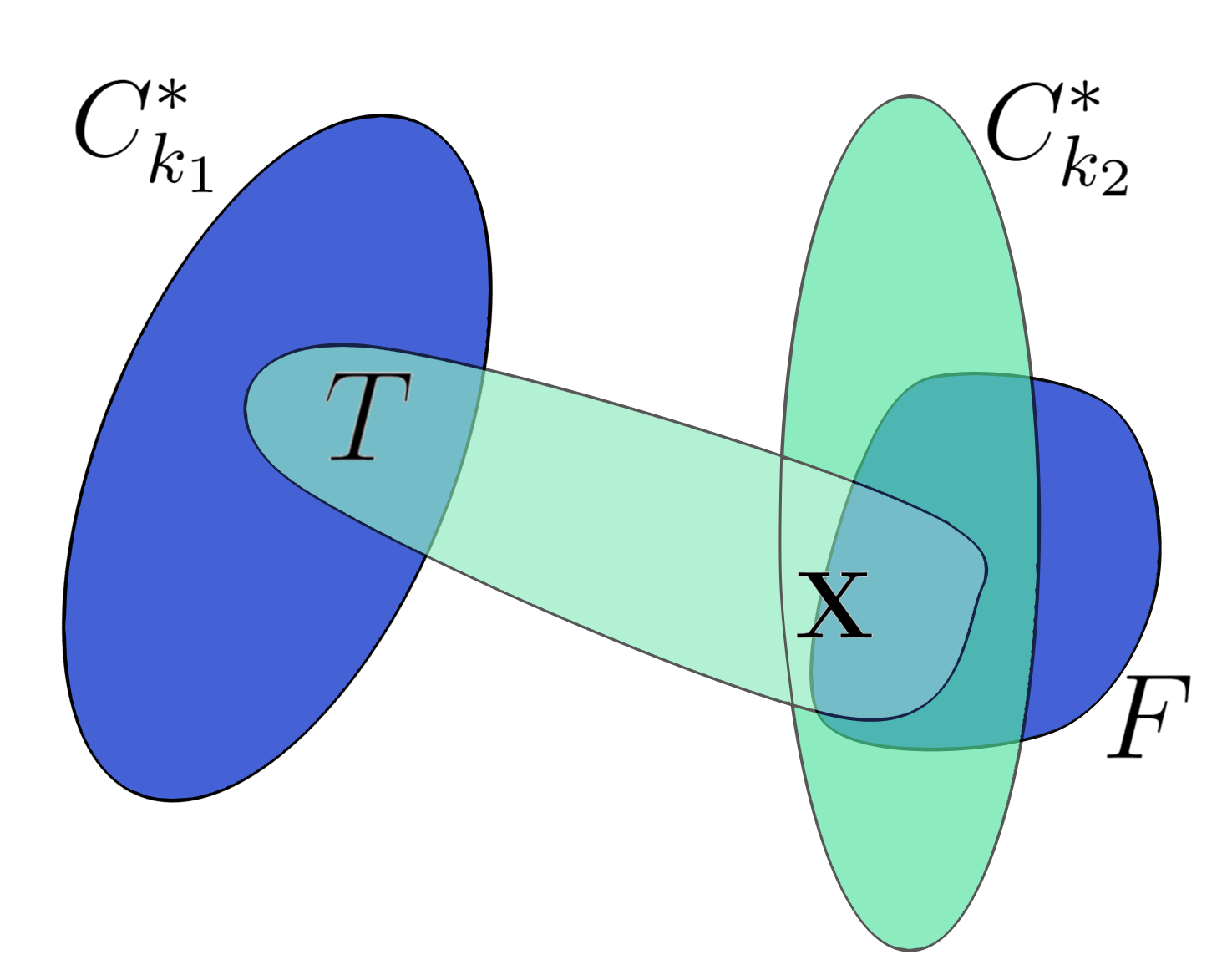}
    \caption{Illustration for the proof of Lemma~\ref{lemma: direction of tentacles}. Different sets' colors correspond to different families.}
    \label{fig: tentacle lemma illustration}
\end{wrapfigure}

Consider a random subset $\randomSubset$ uniformly distributed over $2^{\probabilisticCenter_{k_2}}$. All random variables $\varepsilon_x = \indicator[x \in \randomSubset] \sim Bern(1/2)$ for different $x \in \probabilisticCenter_{k_2}$ are jointly independent.

Assume $\randomSubset \cup T \in \ff_{k_2}$. If $|F \cap \randomSubset| \ge s$, then $F \cup T \not \in \ff_{k_1}$, so
\begin{align*}
    \mathbf{f}_{k_1}^{\ge s} :=  \frac{
        \left | \{
           F \in \ff_{k_1} \mid | F \cap \randomSubset | \ge s 
        \} \right |
    }{
        | \ff_{k_1}|
    } \le f_{k_1}(T).
\end{align*}
The latter  is at most $2^{t + 1} \alpha_n$ due to~\eqref{eq: f(T) fraction is small} and Claim~\ref{claim: inner set degree}. Consequently,
\begin{align}
\label{eq: random subset implication}
    \PP \left [
        \randomSubset \cup T \in \ff_{k_2}
    \right ]
    & \le
    \PP \left [
        \mathbf{f}_{k_1}^{\ge s} \le 2^{t + 1} \alpha_n
    \right ].
\end{align}
For the illustration see Figure~\ref{fig: tentacle lemma illustration}. 

To bound the RHS of~\eqref{eq: random subset implication}, we use Azuma's inequality. Define $d_{k_1}(P, \probabilisticCenter_{k_1}) = |\ff_{k_2}(P, \probabilisticCenter_{k_2})|/|\ff_{k_1}|$. Then
\begin{align}
\label{eq: random intersection fraction}
    \mathbf{f}_{k_1}^{\ge s} =  \sum_{p = s}^{\mSet_{k_1, k_2}} 
        \sum_{P \in \binom{\probabilisticCenter_{k_2}}{p}}
            d_{k_1}(P, \probabilisticCenter_{k_2})
            \prod_{x \in P}
                \varepsilon_x.
\end{align}
It implies
\begin{align}
\label{eq: expectation in tentacle lemma lower bound}
    \EE \left [ \mathbf{f}_{k_1}^{\ge s} \right ] & =  \EE \left [ 
        \sum_{p = s}^{\mSet_{k_1, k_2}} 
            \sum_{P \in \binom{\probabilisticCenter_{k_2}}{p}}
                d_{k_1}(P, \probabilisticCenter_{k_2})
                \prod_{x \in P}
                    \varepsilon_x
    \right ] \nonumber \\
    & \ge
    2^{-\mSet_{k_1, k_2}} d_{k_1 \to k_2}^{(\ge s)} \nonumber \\
    & \ge 2^{-\mSet_{k_1, k_2}} \left (1 - O(n^{-1/2}) \right )
\end{align}
due to~\eqref{eq: expectation s, direction of tentacles}. Consider an exposure martingale 
\begin{align*}
    Y_{w} = \EE \left [
        \mathbf{f}_{k_1}^{\ge s} \, | \, \sigma(\varepsilon_{x_1}, \varepsilon_{x_2}, \ldots, \varepsilon_{x_w})
    \right ], \quad Y_0 = \EE \left [ \mathbf{f}_{k_1}^{\ge s} \right ],
\end{align*}
where $\sigma(\varepsilon_{x_1}, \varepsilon_{x_2}, \ldots, \varepsilon_{x_w})$ is a sigma-algebra generated by $\varepsilon_{x_i}$, $i \in [w]$ and $x_1, x_2, \ldots, x_{|\probabilisticCenter_{k_2}|}$ are elements of $\probabilisticCenter_{k_2}$ in some order. Let $U_w = \{x_1, \ldots, x_w \}$, then
\begin{align*}
    |Y_{w} - Y_{w - 1}| & = 
    \left |
        \left (\varepsilon_{x_w} - \frac{1}{2} \right )
        \sum_{p = s-1}^{\mSet_{k_1, k_2}-1}
            \sum_{P \in \binom{\probabilisticCenter_{k_2} \setminus \{x_w\}}{p}}
                2^{-|P \setminus U_{w-1}|}
                d_{k_1}(P \cup \{x_w\}, \probabilisticCenter_{k_1}) 
                \prod_{x \in P \cap U_{w-1}} \varepsilon_{x}
    \right | \\
    & \le \frac{1}{2}
    \sum_{p = s-1}^{\mSet_{k_1, k_2}-1}
        \sum_{P \in \binom{\probabilisticCenter_{k_2} \setminus \{x_w\}}{p}}
            2^{-|P \setminus U_{w-1}|}
            d_{k_1}(P \cup \{x_w\}, \probabilisticCenter_{k_1}) \\
    & \le \frac{1}{2}
    \sum_{p = s-1}^{\mSet_{k_1, k_2}-1}
        \sum_{P \in \binom{\probabilisticCenter_{k_2} \setminus \{x_w\}}{p}}
            2^{-|P \setminus U_{w-1}|}
            d_{k_1}(P \cup \{x_w\}).
\end{align*}
Due to Proposition~\ref{proposition: multiplicative property of degrees}
\begin{align*}
    d_{k_1}(P\cup\{x_w\}) d_{k_2}(P \cup \{x_w\}) \le C_D n^{-|P| - 1}.
\end{align*}
Since $d_{k_2}(P \cup \{x_w\}) \ge 2^{-\mSet_{k_1, k_2}} - 2 \alpha_n$, we get $d_{k_1}(P \cup \{x_{w}\}) = \frac{2^{\mSet_{k_1, k_2}} C_D n^{-|P| - 1}}{1 - O(n^{-1})} = O(n^{-|P|-1})$ and there is a constant $C$ that does not depend on $w$ and such that
\begin{align*}
    |Y_w - Y_{w-1}| \le \frac{C}{n}.
\end{align*}
Due to Theorem~\ref{theorem: Hoeffding's iniequality}, we have
\begin{align*}
    \PP \left [
        Y_0 - Y_{|\probabilisticCenter_{k_2}|} \ge Y_0 - 2^{t + 1} \alpha_n
    \right ]
    \le 
    \exp \left (
        - \frac{n}{2 C^2} [Y_0 - 2^{t + 1} \alpha_n]^2
    \right )
\end{align*}
By the definition of conditional expectation, $Y_{|\probabilisticCenter_{k_2}|}$ and $Y_0$ stand for $\mathbf{f}_{k_1}^{\ge s}$ and $\EE [\mathbf{f}_{k_1}^{\ge s}]$ respectively. Since $\EE \left [ \mathbf{f}_{k_1}^{\ge s}\right ] = \Omega(1)$ due to~\eqref{eq: expectation in tentacle lemma lower bound}, we rearrange terms inside the probability and obtain
\begin{align*}
    \PP \left [ \mathbf{f}_{k_1}^{\ge s} \le 2^{t + 1} \alpha_n \right ] = e^{- \Omega(n)}.
\end{align*}
It provides the same bound on $\PP \left [ \randomSubset \cup T \in \ff_{k_2} \right ]$ via~\eqref{eq: random subset implication} uniformly over $T$. Note that 
\begin{align*}
    |\ff_{k_2}(T)| \le 
    \left |
        \ff_{k_2}|_{[n] \setminus \probabilisticCenter_{k_2}}
    \right |
    \PP \left [ 
        \randomSubset \cup T \in \ff_{k_2}
    \right ]
    2^{|\probabilisticCenter_{k_2}|}.
\end{align*}
Since $|\ff_{k_2}| \ge 2^{|\probabilisticCenter_{k_2}|}$ and $\left | \ff_{k_2} |_{[n] \setminus \probabilisticCenter_{k_2}} \right | \le n^{\sum_{k} \mSet_{k_1, k}}$, it implies
\begin{align}
\label{eq: tentacle lemma-degree expon bounding}
    d_{k_2}(T) \le n^{\sum_k \mSet_{k_1, k}} e^{- \Omega(n)}.
\end{align}
For any $T' \subset \probabilisticCenter_{k_1}$ of size greater than $t$ we have
\begin{align}
\label{eq: tentacle lemma-degree bounding}
    d_{k_2}(T') \le \frac{1}{\binom{|T'|}{t}} \sum_{T \in \binom{T'}{t}} d_{k_2}(T)
\end{align}
because of down-closeness. Therefore,
\begin{align*}
    d_{k_2 \to k_1}^{(\ge t)} & \le \sum_{t' \ge t} \sum_{T' \in \binom{\probabilisticCenter_{k_1}}{t'}} d_{k_2}(T', \probabilisticCenter_{k_1}) \\
    & \le \sum_{t' \ge t} \sum_{T' \in \binom{\probabilisticCenter_{k_1}}{t'}} d_{k_2}(T') \\
    & \overset{\text{due to~\eqref{eq: tentacle lemma-degree bounding}}}{\le} \sum_{t' = t}^{\mSet_{k_1, k_2}} \sum_{T' \in \binom{\probabilisticCenter_{k_1}}{t'}} \frac{1}{\binom{|T'|}{t}} \sum_{T \in \binom{T'}{t}} d_{k_1}(T) \\
    & \overset{\text{due to~\eqref{eq: tentacle lemma-degree expon bounding}}}{\le} P_t(n) e^{-\Omega(n)} = e^{-\Omega(n)},
\end{align*}
where the degree of a polynomial $P_t(n)$ is bounded as $n$ tends to infinity. Consider
\begin{align*}
    s_1 & = \max \left \{ s \mid d_{k_1 \to k_2} ^{(\ge s)} > \frac 1 2 \right \}, \\
    s_2 & = \max \left \{ s \mid  d_{k_2 \to k_1} ^{(\ge s)} > \frac 1 2 \right \}.
\end{align*}
We have $d_{k_1 \to k_2}^{(\ge s_1 + 1)}, d_{k_2 \to k_1}^{(\ge s_2 + 1)} = e^{-\Omega(n)}$. Note that  for large enough $n$ it holds that $s_1 + s_2 \le \mSet_{k_1, k_2}$ due to~\eqref{eq: direction dilemma} and Claim~\ref{claim: degrees consitence}, and
\begin{align*}
    \left | \ff_{k_1 \setminus k_2}^{(< s_1)} \right | = \left (1 - e^{- \Omega(n)} \right ) |\ff_{k_1}|, \\
    \left | \ff_{k_2 \setminus k_1}^{(< s_2)} \right | = \left (
        1 -  e^{- \Omega(n)}
    \right ) |\ff_{k_2}|.
\end{align*}
And due to Claim~\ref{claim: degrees consitence}
\begin{align}
    \frac{\left |\ff_{k_1 \setminus k_2}^{(\le s_1)} \cap \ff_{k_1}' \right |}{|\ff_{k_1}'|} \ge 1 - \delta_n - e^{- \Omega(n)}, \label{eq: tentacle lemma intersection with s1} \\
    \frac{\left |\ff_{k_2 \setminus k_1}^{(\le s_2)} \cap \ff_{k_2}' \right |}{|\ff_{k_2}'|} \ge 1 - \delta_n - e^{-\Omega(n)}. \label{eq: tentacle lemma intersection with s2}
\end{align}
Condition~\ref{tentacle lemma: lower family volume condition} guarantees, that
\begin{align*}
    \prod_{k \in [\ell]} |\ff_{k}| \le (1 - \delta_n)^{-\ell} \prod_{k \in [\ell]} |\ff_k'|.
\end{align*}
Combining the above with inequalities~\eqref{eq: tentacle lemma intersection with s1},~\eqref{eq: tentacle lemma intersection with s2}, we obtain
\begin{align}
\label{eq: tentacle lemma families product upper bound}
    \prod_{k \in [\ell]} |\ff_{k}| \le 
    \left (1 - \delta_n - e^{-\Omega(n)} \right )^{-2} 
    (1 - \delta_n)^{-\ell} 
    \left |\ff_{k_1 \setminus k_2}^{(\le s_1 )} \cap \ff_{k_1}' \right |
    \left |\ff_{k_2 \setminus k_1}^{(\le s_2 )} \cap \ff_{k_2}' \right |
    \prod_{k \not \in \{k_1, k_2\}} |\ff_k'|.
\end{align}
For a reminder, for an arbitrary $x$ there are at most 2 primed families such that some of their sets contain $x$. Put $A_{k_1, k_2} = \support \left (\ff_{k_1}' \wedge \ff_{k_2}' \right )$. Families
\begin{align*}
    \hh_k = 2^{\probabilisticCenter_k} \vee \bigvee_{k' \neq k} \binom{A_{k, k'}}{\le \mSet_{k, k'}}
\end{align*}
are $\mSet$-overlapping, and, consequently,
\begin{align}
\label{eq: LB for direction of tentacles}
    2^n \prod_{S \in \binom{[\ell]}{2}} \binom{A_S}{\le \mSet_S}
    & =
    \prod_{k \in [\ell]} |\hh_k| \le
    \prod_{k \in [\ell]} |\ff_k| \\
    & \overset{\text{due to~\eqref{eq: tentacle lemma families product upper bound}}}{\le}
    \left (1 - o(1)\right ) |\ff_{k_1 \setminus k_2}^{(\ge s_1 + 1)} \cap \ff_{k_1}'|
    |\ff_{k_2 \setminus k_1}^{(\ge s_2 + 1)} \cap \ff_{k_2}'| \prod_{k \in [\ell] \setminus \{k_1, k_2\}}
        |\ff_{k}'|. \nonumber
\end{align}
Due to Condition~\ref{tentacle lemma: 2-factor structure condition}, we have $\bigwedge_{s \in S} \ff_{k}' = \{\varnothing\}$ for any $S$ of cardinality greater than 2. Thus, from Theorem~\ref{corollary: rinott sets theorem} we infer
\begin{align}
\label{eq: rinot-saks for tentacles}
    |\ff_{k_1 \setminus k_2}^{(\le s_1 )} \cap \ff_{k_1}'| 
    |\ff_{k_2 \setminus k_1}^{(\le s_2 )} \cap \ff_{k_2}'| 
    & \prod_{k \in [\ell] \setminus \{k_1, k_2\}}
        |\ff_{k}'|
    \le \\
    & \le 2^n
    \binom{|A_{k_1, k_2} \cap \probabilisticCenter_{k_1}|}{\le s_2}
    \binom{|A_{k_1, k_2} \cap \probabilisticCenter_{k_2}|}{\le s_1}
    \prod_{S \in \binom{[\ell]}{2} \setminus \{k_1, k_2 \}}
        \binom{A_S}{\le \mSet_S} \nonumber
\end{align}
If $s_1 \neq 0$ and $s_2 \neq 0$ then
\begin{align*}
    \binom{|A_{k_1, k_2} \cap \probabilisticCenter_{k_1}|}{\le s_2}
    \binom{|A_{k_1, k_2} \cap \probabilisticCenter_{k_2}|}{\le s_1}
    \le c
    \binom{|A_{k_1, k_2}|}{\le s_1 + s_2}.
\end{align*}
for some $c < 1$. Comparing inequalities~\eqref{eq: rinot-saks for tentacles} and~\eqref{eq: LB for direction of tentacles} implies that either $s_1 = 0$ or $s_2 = 0$.
Thus, due to equation~\eqref{eq: direction dilemma},
\begin{align*}
    d_{k_1 \to k_2}'^{(\ge \mSet_{k_1, k_2})} & \ge 1 - O(n^{-1}), \\
    d_{k_1 \to k_2}^{(\ge \mSet_{k_1, k_2})} & \ge (1 - \delta_n) (1 - O(n^{-1}))  = 1 - O(\delta_n  + n^{-1}),
\end{align*}
where Claim~\ref{claim: degrees consitence} is used. Also, $d_{k_2 \to k_1} = e^{-\Omega(n)}$. \hfill \qedsymbol{}

Subfamilies described in the assumptions of the lemma exist due to Lemma~\ref{lemma: element excluding}. Thus, the lemma  guarantees that tentacles are directed mostly from one probabilistic center to another. Thus, we can construct an oriented graph $T_\ell$ on the vertex set $[\ell]$, that contains an oriented edge $(k_1, k_2)$ if 
\begin{align*}
    d_{k_2 \to k_1} = e^{- \Omega(n)}.
\end{align*}
In what follows, we assume that $\ff_k$'s come from some particular extremal example; the oriented graph $T_\ell$ is built according to this example and is supposed to be fixed here and after.
\begin{definition}
\label{definition: oriented graph}
    Suppose that $\ff_1, \ldots, \ff_\ell$ from an extremal example. Then the corresponding oriented graph $T_\ell$ with vertices $[\ell]$ has edges defined as follows
    \begin{itemize}
        \item if $\mSet_{k_1, k_2} > 0$ then the edge that connects $k_1$ and $k_2$ is directed from $k_1$ to $k_2$ if and only if $d_{k_2 \to k_1} = e^{-\Omega(n)}$,
        \item if $\mSet_{k_1, k_2} = 0$ then there is no edge between $k_1$ and $k_2$.
    \end{itemize}
\end{definition}
According to Lemma~\ref{lemma: direction of tentacles} and Lemma~\ref{lemma: element excluding}, this oriented graph is defined correctly for the extremal example and large enough $n$. 

We call a vertex $k$ a {\it source} in the tournament $T_\ell$ if it has no incoming edges and a {\it sink} if it has no outgoing edges. We have the following claim.
\begin{claim}
\label{claim: linear size of centers}
If a family from the extremal example is not a source,  then its probabilistic center has size linear in $n$.
\end{claim}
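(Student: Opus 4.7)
The plan is to bound each $|\ff_k|$ sharply from above in terms of the sizes of the probabilistic centers, multiply over $k$, and then match the resulting expression against the asymptotic $s^*(n,\ell,\mSet)=\Theta(n^{\sigma}\cdot 2^n)$ (with $\sigma=\sum_{S\in\binom{[\ell]}{2}}\mSet_S$) supplied by Theorem~\ref{theorem: maximal families product}. This comparison will force the centers indexed by non-source vertices of $T_\ell$ to have size linear in $n$.

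First I would apply Lemma~\ref{lemma: element excluding} to obtain subfamilies $\ff_k'$ that verify the hypotheses of Lemma~\ref{lemma: direction of tentacles}; the additional requirement $\probabilisticCenter_k\in\ff_k'$ can be ensured at no cost, since every element of $\probabilisticCenter_k$ already lies in $\support\ff_k'$. Lemma~\ref{lemma: direction of tentacles} then yields $d_{k\to k'}=e^{-\Omega(n)}$ for every $k'\in In_k$, and a union bound over the constantly many elements of $In_k$ shows that all but an $e^{-\Omega(n)}$-fraction of $F\in\ff_k$ avoid $\bigsqcup_{k'\in In_k}\probabilisticCenter_{k'}$. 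Each such $F$ is automatically contained in $\probabilisticCenter_k\sqcup\bigsqcup_{k'\in Out_k}\probabilisticCenter_{k'}$: for an index $k''$ not adjacent to $k$ in $T_\ell$ one has $\mSet_{k,k''}=0$, and since $\probabilisticCenter_{k''}\in\ff_{k''}$, the $\mSet$-overlapping property forces $F\cap\probabilisticCenter_{k''}=\varnothing$. Using in addition $|F\cap\probabilisticCenter_{k'}|\le\mSet_{k,k'}$ for $k'\in Out_k$, one obtains
\begin{align*}
    |\ff_k|\le\bigl(1+e^{-\Omega(n)}\bigr)\cdot 2^{|\probabilisticCenter_k|}\prod_{k'\in Out_k}\binom{|\probabilisticCenter_{k'}|}{\le\mSet_{k,k'}}.
\end{align*}

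Multiplying these bounds over $k$, using $\sum_k|\probabilisticCenter_k|=n$ from Lemma~\ref{lemma: covering by centers}, and re-indexing the right-hand side by the edges of $T_\ell$ gives
\begin{align*}
    \prod_{k\in[\ell]}|\ff_k|\le(1+o(1))\cdot 2^n\prod_{(a,b)\in E}\binom{|\probabilisticCenter_b|}{\le\mSet_{a,b}},
\end{align*}
while the left-hand side is $\Omega(n^{\sigma}\cdot 2^n)$ by Theorem~\ref{theorem: maximal families product}, so $\prod_{(a,b)\in E}\binom{|\probabilisticCenter_b|}{\le\mSet_{a,b}}=\Omega(n^{\sigma})$. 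Now fix a non-source $k$ and any $k'\in In_k$, so that $(k',k)\in E$ and $\mSet_{k',k}\ge 1$. Applying the trivial estimate $\binom{|\probabilisticCenter_b|}{\le\mSet_{a,b}}\le C_{\mSet}\cdot n^{\mSet_{a,b}}$ at every edge of $T_\ell$ other than $(k',k)$, and $\binom{|\probabilisticCenter_k|}{\le\mSet_{k',k}}\le C_{\mSet}\cdot|\probabilisticCenter_k|^{\mSet_{k',k}}$ at the edge $(k',k)$, yields $|\probabilisticCenter_k|^{\mSet_{k',k}}=\Omega\bigl(n^{\mSet_{k',k}}\bigr)$, and hence $|\probabilisticCenter_k|=\Omega(n)$. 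The only delicate step is ensuring that all the $1+e^{-\Omega(n)}$ slacks from bounding the wrong-direction tentacles compose to $1+o(1)$ after the product over $k$; this is immediate because $\ell$ is a fixed constant and Lemma~\ref{lemma: direction of tentacles} already provides a uniform exponent for every $(k,k')$ with $k'\in In_k$.
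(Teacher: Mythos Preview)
Your argument is correct, but it is not the route the paper takes. The paper gives a local, two-line proof: pick any $k'\in In_k$, use Lemma~\ref{lemma: direction of tentacles} to get $d_{k'\to k}^{(\ge\mSet_{k',k})}=1-O(n^{-1/2})$, observe that this quantity equals $\sum_{S\in\binom{\probabilisticCenter_k}{\mSet_{k',k}}}d_{k'}(S)$ (since $\probabilisticCenter_k\in\ff_k$ caps the intersection at $\mSet_{k',k}$), and bound each summand by $O(n^{-\mSet_{k',k}})$ via Proposition~\ref{proposition: multiplicative property of degrees} and Claim~\ref{claim: inner set degree}. That forces $\binom{|\probabilisticCenter_k|}{\mSet_{k',k}}=\Omega(n^{\mSet_{k',k}})$ directly. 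Your proof instead bounds every $|\ff_k|$ from above, multiplies, and matches against the global asymptotic from Theorem~\ref{theorem: maximal families product}. The paper's version is cheaper in that it never invokes the value of $s^*(n,\ell,\mSet)$, only the pairwise degree constraint; your version is a clean ``global comparison'' that would also immediately give $|\probabilisticCenter_k|=(1+o(1))\frac{\sum_{a:\,k\in Out_a}\mSet_{a,k}}{\sigma}\,n$ if pushed a bit further. One cosmetic point: the bound $\binom{|\probabilisticCenter_k|}{\le\mSet_{k',k}}\le C_{\mSet}\,|\probabilisticCenter_k|^{\mSet_{k',k}}$ fails when $|\probabilisticCenter_k|=0$; replace it by $C_{\mSet}\,(|\probabilisticCenter_k|+1)^{\mSet_{k',k}}$ or simply note that $|\probabilisticCenter_k|=0$ is excluded by the same inequality chain.
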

\begin{proof}
If a family $k$ is not a source, then there is an index $k'$ such that $(k', k) \in E(T_\ell)$. Hence, $d_{k' \to k}^{(\ge \mSet_{k', k})} \ge 1 - \delta_n$ for $\delta_n = O(n^{-1/2})$ due to Lemma~\ref{lemma: element excluding}. Combining Claim~\ref{claim: inner set degree} and Proposition~\ref{proposition: multiplicative property of degrees}, we obtain that $d_{k'}(S) = O(n^{-|S|})$ uniformly over $S \in \binom{\probabilisticCenter_{k}}{\mSet_{k, k'}}$. Since $d_{k' \to k}^{(\ge \mSet_{k', k})} = \sum_{S \in \binom{\probabilisticCenter_{k}}{\mSet_{k, k'}}} d_{k'}(S)$, $|\probabilisticCenter_{k}|$ should be linear in $n$.
\end{proof}

\subsection{Symmetrization lemma}
\label{subsection: symmetrization lemma}

The central lemma of this subsection uses a symmetrization argument inspired by the famous Zykov's symmetrization~\cite{zykov1949some} who used it for an alternative proof of Turan's theorem. Before we will provide the statement, we introduce some new notation. Given a family $\ff$ and a set $S$, define
\begin{align*}
    \ff \left (\excludeSet{S} \right ) & = \{ F \setminus S \mid F \in \ff \text{ and } S \subset F \}, \\
    \ff \left (\saveSet{S} \right ) & = \{ F \mid F \in \ff \text{ and } S \subset F \}.
\end{align*}
The family $\ff \left (\overset{\circ}{S} \right )$ coincides with previously defined $\ff(S)$. However, we need some notation to describe precisely whether the set $S$ remains or not. Symbols ``$\circ$'' and ``$\bullet$'' can be naturally combined in our context, for example,
\begin{align*}
    \ff \left ( \excludeSet{S} \sqcup \saveSet{T} \right ) = \{ F \setminus S \mid F \in \ff \text{ and } S \cup T \subset F \}.
\end{align*}
Besides, we remind that $\ff(\overline{S})$ is usually used for the sets that do not intersect $S$. For the sets that do not contain $S$ we use $\ff(\notContainSet{S}) := \ff \setminus \left [\ff(\saveSet{S}) \right ]$.

As before, we suppress braces of singletons.

Under a {\it tentacle} of a family $\ff_{k_0}$ we mean any its set $S_0 \subset \bigcup_{k' \mid (k, k') \in E(T_\ell)} \probabilisticCenter_{k'}$. As it was discussed in Section~\ref{section: sketch of the proof}, the idea of the lemma is enlarging families by chopping off tentacles of small degree and replacing them with a copy of a large degree tentacle.  The method can be described as follows: if $d_{k_0}(S_0)$ is small, choose $S$ of a large degree, then for each set $F \in \ff_{k_0}(\excludeSet{S})$ add $F \cup S_0$ to the family $\ff_k$. 

Such a scheme typically breaks the $\mSet$-overlapping property. Instead, we use the modified procedure. It consists of three steps:
\begin{enumerate}
    \item \label{item: simmetrization procedure, step 1} Replace all families $\ff_k$ with $\ff_k' = \ff_k(\notContainSet{S_0})$.
    \item \label{item: simmetrization procedure, step 2} Choose $S$ such that $\left | \ff_{k_0}(\excludeSet{S}) \cap \bigcap_{S' \subsetneq S_0} \ff_{k_0}(\excludeSet{S'})\right | / |\ff_{k_0}|$ is large and for each porbabilistic center $\probabilisticCenter_k$ it holds that $|S \cap \probabilisticCenter_k | = |S_0 \cap \probabilisticCenter_k|$.
    \item For each $F \in \ff_k(\excludeSet{S}) \cap \bigcap_{S' \subsetneq S_0} \ff_k(\excludeSet{S'})$, add $F \cup S_0$ to $\ff_{k_0}'$. Denote this new family by $\tilde \ff_{k_0}$.
\end{enumerate}
If we would not intersect $\ff_{k_0}(\excludeSet{S})$ with $\bigcap_{S' \subsetneq S_0} \ff_{k_0}(\excludeSet{S'})$, then the $\mSet$-overlapping property can be broken. Indeed, no one guarantees that there is no set $F$ in $\ff_{k_0}(\excludeSet{S})$ and a set $F' \in \ff_{k'}'$ for some $k' \in [\ell] \setminus \{k_0\}$ such that $|(F \cup S_0) \cap F'| > \mSet_{k_0, k'}$. On the other hand, families $\ff_{k_0}(\saveSet{S'})$ and $\ff_{k'}(\notContainSet{S_0})$ preserve the $\mSet$-overlapping property for any $S' \subsetneq S_0$, and the intersection from the step~\ref{item: simmetrization procedure, step 2} makes the procedure correct. If $|S_0| = 1$ then $\bigcap_{S' \subsetneq S_0} \ff_k(\excludeSet{S'})$ is assumed to be $\ff_k$.

Under some conditions $|\tilde \ff_{k_0}| \cdot \prod_{k' \in [\ell] \setminus \{k_0\}} |\ff_{k'}'|$ turns out to be larger than the initial product, and that leads to contradiction if the families $\ff_k$ form an extremal example. For instance, step~\ref{item: simmetrization procedure, step 1} should not reduce families significantly, and thus all degrees $d_k(S_0)$ must be small. In what follows, we show that it is true if the tentacle $S_0$ intersect several centers. However, if $S_0 \subset \probabilisticCenter_{k_1}$ for some $k_1$, $d_{k_1}(S_0)$ is asymptotically constant, and that makes us find another way of pruning instead of the one described in step~\ref{item: simmetrization procedure, step 1}. 

\begin{claim}
\label{claim: double counting} 
Let $\ff_k$ be the families from the extremal example and $k_0, k_1$ be arbitrary distinct indices such that $k_0 \to k_1$ is an edge in the oriented graph $T_\ell$. Let $S_0 \in \ff_{k_0}$ be a subset of $\probabilisticCenter_{k_1}$ of size at most $\mSet_{k_0, k_1}$ and $\mathcal{L} \subset \binom{\probabilisticCenter_{k_1}}{|S_0|}$ be an $|S_0|$-uniform subfamily of $\ff_{k_0}$. Define
\begin{align*}
    \mathcal{R} = \left \{
        T \in \binom{[n] \setminus \probabilisticCenter_{k_0}}{\mSet_{k_0, k_1} + 1 - |S_0|}
        \mid 
        d_{k_1}(T) \ge n^{- 2 \mSet_{k_0, k_1} + |S_0| - 3}
    \right \}.
\end{align*}
Define a bipartite graph $G = (\mathcal{L}, \mathcal{R}; E)$ with edges
\begin{align*}
    E = \left \{
        \{S, T\} 
        \mid
        S \in \mathcal{L}, 
        T \in \mathcal{R} 
        \text{ and } S \cup T \in \ff_{k_0}(\notContainSet{S_0})
    \right \}.
\end{align*}
Then there is $S_1 \in \mathcal{L}$ and a constant $C$ such that $$deg_G(S_1) \le \frac{C n^{\mSet_{k_0, k_1}} \log_2 n }{|\mathcal{L}|},$$
where $deg_G(\cdot)$ is a usual vertex degree in the graph $G$.
\end{claim}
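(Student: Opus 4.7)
The name of the claim suggests a double-counting argument, and indeed the strategy is to bound the total edge count $|E(G)|$ from above by $C\, n^{\mSet_{k_0,k_1}} \log_2 n$, so that the vertex $S_1 \in \mathcal L$ of minimum degree automatically satisfies $\deg_G(S_1) \le |E(G)|/|\mathcal L|$, which is the bound we need. Write $|E(G)| = \sum_{T \in \mathcal R} \deg_G(T)$. Each edge $(S,T)$ corresponds to the set $F = S \cup T \in \ff_{k_0}(\notContainSet{S_0})$ of size at most $m+1 := \mSet_{k_0,k_1}+1$, contained in $[n] \setminus \probabilisticCenter_{k_0}$ (since $S \subset \probabilisticCenter_{k_1}$ and $T \subset [n]\setminus\probabilisticCenter_{k_0}$, while the centers partition $[n]$). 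The fiber of the map $(S,T) \mapsto F$ has size bounded by a constant depending only on $\mSet_{k_0,k_1}$ and $|S_0|$, so it suffices to bound $\sum_{T\in\mathcal R} |\ff_{k_0}^{(\le m+1)}(\saveSet T)|$ appropriately.

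The main structural input comes from two applications of Proposition~\ref{proposition: multiplicative property of degrees}. Applied to $K = \{k_0, k_1\}$ with $F_{k_0} = F$ and $F_{k_1} = S$ (which lies in $\ff_{k_1}$ by down-closedness since $S \subset \probabilisticCenter_{k_1} \in \ff_{k_1}$), together with the lower bound $d_{k_1}(S) \ge 2^{-|S_0|} - 2\alpha_n$ from Claim~\ref{claim: inner set degree}, one obtains $d_{k_0}(F) = O(n^{-|S_0|})$ uniformly over admissible $F$. Applied with $F_{k_0} = F_{k_1} = T$, one obtains $d_{k_0}(T) \le C_D\, n^{-|T|}/d_{k_1}(T)$, trading a bound on $d_{k_0}(T)$ against the $\ff_{k_1}$-degree of $T$.

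To produce the $\log_2 n$ factor I would partition $\mathcal R$ dyadically: $\mathcal R_j = \{T \in \mathcal R : 2^{-j-1} \le d_{k_1}(T) < 2^{-j}\}$, for $j = 0, 1, \ldots, (2\mSet_{k_0,k_1} - |S_0| + 3)\log_2 n$, which is $O(\log_2 n)$ layers by the defining inequality of $\mathcal R$. In layer $j$ the bound $|\mathcal R_j|\cdot 2^{-j-1} \le \sum_{T} d_{k_1}(T) \le \binom{n}{|T|}$ gives $|\mathcal R_j| \le O(2^{j+1} n^{m+1-|S_0|})$, while the two consequences of Proposition~\ref{proposition: multiplicative property of degrees} above yield $d_{k_0}(T) \le C_D\, 2^{j+1} n^{-|T|}$ within the layer and $d_{k_0}(F) = O(n^{-|S_0|})$ uniformly. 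Combining these so that the per-layer edge contribution is $O(n^{\mSet_{k_0,k_1}})$, and summing over the $O(\log_2 n)$ layers, yields $|E(G)| \le C\, n^{\mSet_{k_0,k_1}} \log_2 n$.

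The main obstacle will be the delicate layer-by-layer balancing: one must simultaneously use both bounds from Proposition~\ref{proposition: multiplicative property of degrees}---the $d_{k_0}(F) = O(n^{-|S_0|})$ bound, which is uniform, and the $d_{k_0}(T)$ bound, which varies with the layer---so that each layer contributes at most $O(n^{\mSet_{k_0,k_1}})$ edges and the total over $O(\log_2 n)$ layers matches the target. The non-disjoint case $S \cap T \ne \emptyset$ introduces a minor technical complication in the fiber of $(S,T)\mapsto F$, but is absorbed into the constant $c_m$; the condition $S_0 \not\subset F$ plays a small but essential role in keeping the edges counted distinct from sets already containing $S_0$ that will be treated separately in the symmetrization that follows.
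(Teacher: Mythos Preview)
Your overall framework---bound $|E(G)|$ by $C\,n^{\mSet_{k_0,k_1}}\log_2 n$ and then take the minimum-degree vertex in $\mathcal L$---matches the paper. The gap is in how you propose to bound $|E(G)|$.

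The quantities $d_{k_0}(T)$ and $d_{k_0}(F)$ that you extract from Proposition~\ref{proposition: multiplicative property of degrees} are normalized degrees: they measure what \emph{fraction} of the (exponentially large) family $\ff_{k_0}$ contains $T$ or $F$. But $\deg_G(T)$ counts how many $|S_0|$-element sets $S\subset\probabilisticCenter_{k_1}$ satisfy $S\cup T\in\ff_{k_0}$, i.e.\ how many \emph{specific small sets} belong to $\ff_{k_0}$. There is no step in your outline converting a bound on $d_{k_0}(T)$ or $d_{k_0}(F)$ into a bound on $|\{S: S\cup T\in\ff_{k_0}\}|$, and I do not see how the dyadic layering over $d_{k_1}(T)$ helps: your per-layer estimates $|\mathcal R_j|\le O(2^{j+1}n^{|T|})$ and $d_{k_0}(T)\le C_D\,2^{j+1}n^{-|T|}$ both grow with $j$, and neither bounds $\deg_G(T)$.

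The paper takes a completely different route. It bounds $\deg_G(T)$ \emph{uniformly} over $T\in\mathcal R$ by $O(n^{|S_0|-1}\log_2 n)$. The key observation is that if $S\in\mathcal N_G(T)$ then $S\cup T\in\ff_{k_0}$ has size $\mSet_{k_0,k_1}+1$, so by the $\mSet$-overlapping property no set in $\ff_{k_1}$ containing $T$ can contain $S$; hence $\ff_{k_1}(T)|_{\probabilisticCenter_{k_1}}$ avoids the entire upper closure $\mathcal N_G(T)^\uparrow$ inside $2^{\probabilisticCenter_{k_1}}$. The Kruskal--Katona theorem then lower-bounds $|\mathcal N_G(T)^\uparrow|$ in terms of $|\mathcal N_G(T)|$, and combined with the defining lower bound $d_{k_1}(T)\ge n^{-2\mSet_{k_0,k_1}+|S_0|-3}$ this forces $|\mathcal N_G(T)|\le O(n^{|S_0|-1}\log_2 n)$; the logarithm arises here, not from a dyadic decomposition. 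Summing over the trivial bound $|\mathcal R|\le n^{\mSet_{k_0,k_1}+1-|S_0|}$ finishes the count. You will need this Kruskal--Katona step (or an equivalent shadow argument); the ingredients you listed do not suffice.
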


\textbf{Remark.} For the proof of Claim~\ref{claim: double counting}, we do not need to exclude $S_0$ in the definition of edges $E$. But we will use it in what follows.

\begin{proof}
    We denote by $\mathcal{N}_G(\cdot)$ a set of neighbours in the graph $G$. First, we uniformly bound the number of neighbours $|\mathcal{N}_G(T)|$ of a vertex $T \in \mathcal{R}$. Let $\mathcal{N}_G(T)^{\uparrow} := \{F \subset \probabilisticCenter_{k_1} \mid \exists F' \in \mathcal{N}_G(T) \text{ s.t. } F' \subset F \}$ be the upper closure of $\mathcal{N}_G(T)$ in $\probabilisticCenter_{k_1}$. Then, by the definition of the graph $G$,
    \begin{align}
    \label{eq: graph claim T degree upper closure bound}
        |\ff_{k_1}(T)|_{\probabilisticCenter_{k_1}}| \le 2^{|\probabilisticCenter_{k_1}|} - |\mathcal{N}_G(T)^{\uparrow}|.
    \end{align}
    
    To estimate the size of $\mathcal{N}_G(T)^\uparrow$ and, thus, the size of $\mathcal{N}_G(T)$, we use Theorem~\ref{theorem: Kruskal-Katona theorem} which bound the upper shadow $\partial_u$ (see Definition~\ref{definition: upper shadow}) of any uniform family. We use $\partial_u^i$ for an $i$-th consecutive composition of upper shadows: $\partial_u^i = \partial_u \circ \partial_u \ldots \circ \partial_u$ $i$ times. Then $\mathcal{N}_G(T)^\uparrow$ can be expressed in terms of shadows:
    \begin{align*}
        \mathcal{N}_G(T)^\uparrow = \bigsqcup_{i = 0}^{|\probabilisticCenter_{k_1}| - |S_0| - 1} \partial_u^i \mathcal{N}_G(T).
    \end{align*}
    
    Denote by $\mathcal{B}(|\mathcal{N}_G(T)|)$ first $|\mathcal{N}_G(T)|$ sets of $\binom{\probabilisticCenter_{k_1}}{|S_0|}$ in the lexicographical order (we refer reader to Section~\ref{subsection: Kruskal-Katona theorem}, in Tools, for definitions). We call such family an {\it initial segment} in $\binom{\probabilisticCenter_{k_1}}{|S_0|}$ of size $|\mathcal{N}_G(T)|$. One can easily check that $\partial_u^i \mathcal{B}(|\mathcal{N}_G(T)|)$ is a again an initial segment of $\binom{\probabilisticCenter_{k_1}}{|S_0| + i}$.

    Consider the smallest $s$ such that
    \begin{align*}
        |\mathcal{N}_G(T)| \le s \binom{|\probabilisticCenter_{k_1}|}{|S_0| - 1}.
    \end{align*}
    Hence $|\mathcal{N}_G(T)| > (s - 1) \binom{|\probabilisticCenter_{k_1}|}{|S_0| - 1}$, and $\mathcal{B}(|\mathcal{N}_G(T)|)$ contains all sets of size greater than $|S_0|$ that start with $t$-th smallest element of $\probabilisticCenter_{k_1}$, for each $t < s$. Thus, due to Theorem~\ref{theorem: Kruskal-Katona theorem},
    \begin{align*}
        |\mathcal{N}_G(T)^{\uparrow}| \ge |\mathcal{B}(|\mathcal{N}_G(T)|)^\uparrow| \ge 2^{|\probabilisticCenter_{k_1}|} - 2^{|\probabilisticCenter_{k_1}| - s + 1} - \binom{|\probabilisticCenter_{k_1}|}{\le |S_0| - 1}.
    \end{align*}
    Since $d_{k_1}(T) \le n^{\sigma} \left |\ff_{k_1}(T)|_{\probabilisticCenter_{k_1}} \right | / |\ff_{k_1}|$ and $|\ff_{k_1}| \ge 2^{|\probabilisticCenter_{k_1}|}$, using~\eqref{eq: graph claim T degree upper closure bound} we obtain
    \begin{align*} 
        d_{k_0}(T) 2^{|\probabilisticCenter_{k_1}|} \le n^{\sigma} \left (2^{|\probabilisticCenter_{k_1}| - s + 1} + \binom{|\probabilisticCenter_{k_1}|}{\le |S_0| - 1} \right ).
    \end{align*}
    The definition of $\mathcal{R}$ and Claim~\ref{claim: linear size of centers} ensure us that $s$ can be at most logarithmic in $n$. Thus, $|\mathcal{N}_G(T)| = O \left ( n^{|S_0| - 1} \log_2 n \right )$.
    We get,
    \begin{align*}
        \sum_{S \in \mathcal{L}} deg_G(S) = \sum_{T \in \mathcal{R}} deg_G(T) \le |\mathcal{R}| \cdot O(n^{|S_0| - 1} \log_2 n) \le n^{\mSet_{k, k_0}} O (\log_2 n).
    \end{align*}
    Finally, choose $S_1$ such that $deg_G(S_1) \le |\mathcal{L}|^{-1} n^{\mSet_{k, k_0}} O(\log n)$.
\end{proof}

Now we are ready to formalize the symmetrization argument.

\begin{lemma}
\label{lemma: symmetrization argument}
Let $\ff_1, \ldots, \ff_\ell$ be families from the extremal example. Take any $k_0$ and a subfamily $\sDomain_{k_0}$ of $\bigvee_{k' \in Out_{k_0}} \binom{\probabilisticCenter_{k'}}{\le \mSet_{k_0, k'}}$ such that $\sDomain_{k_0} |_{\probabilisticCenter_{k'}}$ is uniform for any $k' \in Out_{k_0}$. Let $S_0$ be arbitrary set from $\sDomain_{k_0}$ and $\Delta < 1$ be a positive real number, satisfying one of the (not necessary mutually exclusive) conditions:
\begin{enumerate}[label=(\alph*)]
    \item \label{symmetrization lemma: case 1} there exists $k_1$ such that $S_0 \subset \probabilisticCenter_{k_1}$ and $\max_{k' \in [\ell] \setminus \{k_0, k_1\}} d_{k'}(S_0) \le \frac{\Delta / 2}{\ell - 2}$,
    \item \label{symmetrization lemma: case 2} $\max_{k' \in [\ell] \setminus \{k_0\}} d_{k'}(S_0) \le \frac{\Delta / 2}{\ell - 1}$.
\end{enumerate} 
Then, if \ref{symmetrization lemma: case 1} holds, then
\begin{align*}
    \mathcal{I}_\Delta = \left \{ S \in \sDomain_{k_0} \mid \frac{1}{|\ff_{k_0}|} \left | \ff_{k_0}(\excludeSet{S}) \cap \bigcap_{S' \subsetneq S_0} \ff_{k_0}(\excludeSet{S'}) \right | \ge d_k(S_0) + \Delta \right \}
\end{align*}
has cardinality at most 
\begin{align*}
    \frac{1 - \Delta/2}{\Delta(1 - \Delta) / 2 - O(n^{-\mSet_{k_0, k_1} - 2})} \cdot O(n^{-1} \log n).
\end{align*}
If \ref{symmetrization lemma: case 2} holds, then $\mathcal{I}_\Delta = \varnothing$.
\end{lemma}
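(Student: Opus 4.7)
The plan is to argue by contradiction, carrying out the three-step symmetrization procedure described above. Given $S\in\sDomain_{k_0}$, I would set $\ff_k':=\ff_k(\notContainSet{S_0})$ for every $k\ne k_0$ and
\[
\tilde\ff_{k_0}:=\ff_{k_0}(\notContainSet{S_0})\cup\Bigl\{F\cup S_0\,:\,F\in\ff_{k_0}(\excludeSet{S})\cap\bigcap_{S'\subsetneq S_0}\ff_{k_0}(\excludeSet{S'})\Bigr\}.
\]
The first substep is to verify $\mSet$-overlap: every such $F$ satisfies $F\cap S_0=\varnothing$ and $F\cup S'\in\ff_{k_0}$ for each $S'\subsetneq S_0$, so for any $F'\in\ff_{k'}'$ with $k'\ne k_0$ the set $T:=F'\cap S_0$ is a proper subset of $S_0$ and
\[
|(F\cup S_0)\cap F'|=|F\cap F'|+|T|=|(F\cup T)\cap F'|\le\mSet_{k_0,k'}
\]
by the $\mSet$-overlap applied to $F\cup T\in\ff_{k_0}$. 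A direct count then gives $|\tilde\ff_{k_0}|/|\ff_{k_0}|=1-d_{k_0}(S_0)+D_S$, where $D_S:=|\ff_{k_0}(\excludeSet{S})\cap\bigcap_{S'\subsetneq S_0}\ff_{k_0}(\excludeSet{S'})|/|\ff_{k_0}|$, and $|\ff_k'|/|\ff_k|=1-d_k(S_0)$ for $k\ne k_0$.

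\textbf{Case \ref{symmetrization lemma: case 2}.} The hypothesis forces $\sum_{k\ne k_0}d_k(S_0)\le\Delta/2$, while $S\in\mathcal{I}_\Delta$ gives $D_S\ge d_{k_0}(S_0)+\Delta$. Hence the product ratio is at least $(1+\Delta)(1-\Delta/2)=1+\Delta(1-\Delta)/2>1$ for $\Delta<1$, contradicting extremality; so $\mathcal{I}_\Delta=\varnothing$.

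\textbf{Case \ref{symmetrization lemma: case 1}.} Now $d_{k_1}(S_0)\ge 2^{-|S_0|}-2\alpha_n=\Theta(1)$ by Claim~\ref{claim: inner set degree}, so the blanket pruning of $\ff_{k_1}$ would be too costly and must be made surgical. My plan is to apply Claim~\ref{claim: double counting} with $\mathcal{L}=\mathcal{I}_\Delta\subset\binom{\probabilisticCenter_{k_1}}{|S_0|}$ to extract $S_1\in\mathcal{I}_\Delta$ with $\deg_G(S_1)=O(n^{\mSet_{k_0,k_1}}\log n/|\mathcal{I}_\Delta|)$, use $S_1$ as the symmetrization target, and remove from $\ff_{k_1}$ only those $F'$ that actually conflict with some newly added $F\cup S_0$. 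Such a conflict forces $S_0\subset F'$ together with the existence of a set $T$ disjoint from $S_0$ of size $\mSet_{k_0,k_1}+1-|S_0|$ with $T\subset F\cap F'$ and (via $F\in\ff_{k_0}(\excludeSet{S_1})$) $T\cup S_1\in\ff_{k_0}$. I split the bad $T$'s into three classes: (i) $T\cap\probabilisticCenter_{k_0}\ne\varnothing$ --- Lemma~\ref{lemma: direction of tentacles} (applicable because \ref{symmetrization lemma: case 1} implies $(k_0,k_1)\in E(T_\ell)$) gives $d_{k_1}(T)=e^{-\Omega(n)}$, with negligible cumulative contribution; (ii) $T\subset[n]\setminus\probabilisticCenter_{k_0}$ but $T\notin\mathcal{R}$ --- the definition of $\mathcal{R}$ combined with the count of candidate $T$'s yields cumulative loss $O(n^{-\mSet_{k_0,k_1}-2})$; (iii) $T\in\mathcal{R}$ with $S_1\cup T\in\ff_{k_0}(\notContainSet{S_0})$ --- these are exactly the neighbours of $S_1$ in $G$, numbering $\deg_G(S_1)$, and contributing $O(\log n/(n\,|\mathcal{I}_\Delta|))$ once one has $d_{k_1}(S_0\cup T)=O(n^{-\mSet_{k_0,k_1}-1})$ from Proposition~\ref{proposition: multiplicative property of degrees}. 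Summing the three contributions and requiring the product ratio $(1+\Delta)(1-\Delta/2)(1-\mathrm{loss}_{k_1})$ not to exceed $1$ (by extremality) yields, after rearrangement, the stated bound on $|\mathcal{I}_\Delta|$.

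\textbf{Main obstacle.} The cleanest step is the $\mSet$-overlap verification, enabled by the intersection over $S'\subsetneq S_0$ in the definition of $\tilde\ff_{k_0}$. The hardest step will be securing the $O(n^{-\mSet_{k_0,k_1}-1})$ bound on $d_{k_1}(S_0\cup T)$ in class (iii): Proposition~\ref{proposition: multiplicative property of degrees} only yields the product bound $d_{k_0}(S_0\cup T)\cdot d_{k_1}(S_0\cup T)\le C_D n^{-\mSet_{k_0,k_1}-1}$, so I must separately extract an $\Omega(1)$-type lower bound on $d_{k_0}(S_0\cup T)$ from the richness of $\ff_{k_0}$ near the tentacle $S_0$ in the extremal example. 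Handling this cleanly, together with the careful three-way bookkeeping above, is where the bulk of the technical work lies.
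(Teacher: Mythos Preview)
Your treatment of case~\ref{symmetrization lemma: case 2} is correct and coincides with the paper. In case~\ref{symmetrization lemma: case 1} the overall architecture --- invoke Claim~\ref{claim: double counting} with $\mathcal{L}=\mathcal{I}_\Delta$, pick $S_1$ of small $G$-degree, symmetrize, and split the conflicts into your three classes --- is exactly the paper's plan, and classes (i) and (ii) match the paper's pruning of $\ff_{k_1}$. The genuine gap is in class (iii).

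You propose to absorb class (iii) by pruning $\ff_{k_1}$, which forces you to bound $d_{k_1}(S_0\cup T)$, and you identify as the main obstacle securing an $\Omega(1)$ lower bound on $d_{k_0}(S_0\cup T)$. That lower bound is unattainable: since $S_0\subset\probabilisticCenter_{k_1}$, Proposition~\ref{proposition: multiplicative property of degrees} with Claim~\ref{claim: inner set degree} already gives $d_{k_0}(S_0)=O(n^{-|S_0|})$, whence $d_{k_0}(S_0\cup T)\le d_{k_0}(S_0)=o(1)$. More to the point, because $S_0$ sits inside $\probabilisticCenter_{k_1}$ it contributes nothing toward lowering $d_{k_1}(S_0\cup T)$; the best Proposition~\ref{proposition: multiplicative property of degrees} yields (taking $F_{k_1}=S_0\cup T$ and $F_k=(S_0\cup T)\cap\probabilisticCenter_k$ for $k\ne k_1$) is $d_{k_1}(S_0\cup T)=O(n^{-|T\setminus\probabilisticCenter_{k_1}|})$. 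The constraint $S_1\cup T\in\ff_{k_0}$ only forces $|T\cap\probabilisticCenter_{k_1}|\le\mSet_{k_0,k_1}-|S_0|$, so $|T\setminus\probabilisticCenter_{k_1}|$ can be as small as $1$, and the bound degenerates to $O(n^{-1})$ --- too large by a factor $n^{\mSet_{k_0,k_1}}$ for your arithmetic to close.

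The paper's resolution is to move the class (iii) pruning to the \emph{other} side: instead of touching $\ff_{k_1}$, subtract $\bigcup_{T\in\mathcal{N}_G(S_1)}\ff_{k_0}(\excludeSet{S_1}\cup\saveSet{T})$ from the family $\mathcal{V}_{k_0}$ of sets to be added to $\ff_{k_0}'$. The relevant quantity becomes $d_{k_0}(S_1\cup T)$, and since $S_1\subset\probabilisticCenter_{k_1}$ and $T\in\mathcal{R}\subset\binom{[n]\setminus\probabilisticCenter_{k_0}}{\cdot}$, all of $S_1\cup T$ lies outside $\probabilisticCenter_{k_0}$; then Proposition~\ref{proposition: multiplicative property of degrees} with $F_k=(S_1\cup T)\cap\probabilisticCenter_k$ and Claim~\ref{claim: inner set degree} give $d_{k_0}(S_1\cup T)=O(n^{-|S_1\cup T|})=O(n^{-\mSet_{k_0,k_1}-1})$ immediately. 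The class (iii) loss $\deg_G(S_1)\cdot O(n^{-\mSet_{k_0,k_1}-1})=O(n^{-1}\log n/|\mathcal{I}_\Delta|)$ now enters the $(1+\Delta)$ factor rather than the $\ff_{k_1}$ factor, and your product-ratio rearrangement goes through unchanged to give the stated bound on $|\mathcal{I}_\Delta|$.
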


\begin{proof}
Here and after we use notation of Claim~\ref{claim: double counting}. If \ref{symmetrization lemma: case 1} holds, apply Claim~\ref{claim: double counting} for $\mathcal{L} = \mathcal{I}_\Delta$. As a result, we obtain $S_1 \in \mathcal{I}_\Delta$ such that its neighbourhood $\mathcal{N}_G (S_1) \subset \mathcal{R}$ in the graph $G$ has cardinality $|\mathcal{I}_\Delta|^{-1} \cdot O(n^{\mSet_{k_0, k_1}} \log n)$. Moreover, we have
\begin{align}
\label{eq: T out of R, symmetrization lemma}
    \sum_{T \in \binom{[n] \setminus \probabilisticCenter_{k_0}}{\mSet_{k_0, k_1} + 1 - |S_0|} \setminus \mathcal{R}} d_{k_1}(T)
    \le n^{\mSet_{k_0, k_1} + 1 - |S_0|} n^{- 2 \mSet_{k_0, k_1} + |S_0| - 3} = n^{- \mSet_{k_0, k_1} - 2}.
\end{align}
If $S_0$ completely lies in $\probabilisticCenter_{k_1}$ for some $k_1$, i.e. we are in case~\ref{symmetrization lemma: case 1}, then modify families in the following way:
\begin{align}
\label{eq: graph symmetrization primed families definition}
    \ff_{k}' = \begin{cases}
    \ff_{k}(\notContainSet{S_0}), & k \neq k_1, \\
    \ff_{k \setminus k_0} \cap \bigcap_{T \in \binom{[n] \setminus \probabilisticCenter_{k_0}}{\mSet_{k_0, k_1} + 1 - |S_0|} \setminus \mathcal{R}} \ff_{k_1}(\notContainSet{T}), & k = k_1.
    \end{cases}
\end{align}
In case~\ref{symmetrization lemma: case 2}, just consider $\ff_{k}' =  \ff_{k}(\notContainSet{S_0})$ for all $k$ and choose $S_1 \in \mathcal{I}_\Delta$ arbitrary.

Next, we need to add sets to $\ff_{k_0}$. Define:
\begin{align*}
    \mathcal{V}_{k_0} =
    \begin{cases}
        \left [ \ff_{k_0}(\excludeSet{S_1}) \cap \bigcap_{S' \subsetneq S_0} \ff_{k_0}(\excludeSet{S'}) \right ] \setminus \bigcup_{T \in \mathcal{N}_G(S_1)} \ff_{k_0}(\excludeSet{S_1} \cup \saveSet{T}), & \text{if \ref{symmetrization lemma: case 1} holds}, \\
        \ff_{k_0}(\excludeSet{S_1}) \cap \bigcap_{S' \subsetneq S_0} \ff_{k_0}(\excludeSet{S'}), & \text{if~\ref{symmetrization lemma: case 2} holds}.
    \end{cases}
\end{align*}
Also define $\tilde{\ff}_{k_0} = \ff_{k_0}' \sqcup (\mathcal{V}_{k_0} \vee \{S_0\})$. The families $\ff_{k}'$, $k \neq k_0$, and $\tilde \ff_{k_0}$ are the desired ``symmetrized'' families.

We should check that the $\mSet$-overlapping property is preserved. First, we check that the families $\tilde{\ff}_{k_0}$ and $\ff_k', k \in [\ell] \setminus \{k_0\},$ are down-closed. The case of $\ff_{k}'$, $k  \in [n] \setminus \{k_0\}$, is obvious. Proving that for $\tilde{\ff}_{k_0}$ is a bit more tricky. Consider a set $F \in \tilde{\ff}_{k_0}$.
\begin{itemize}
    \item If $S_0 \subset F$ then $F \not \in \ff_{k_0}'$ and, hence, $F \setminus S_0 \in \mathcal{V}_{k_0}$. By the definition of $\mathcal{V}_{k_0}$, we have $F \setminus S_0 \in \ff_{k_0}(\excludeSet{S_1}) \cap \bigcap_{S' \subsetneq S_0} \ff_{k_0}(\excludeSet{S'})$. The latter family is down-closed; consequently, for any $G \subset F \setminus S_0$ we also have $G \in \ff_{k_0}(\excludeSet{S_1}) \cap \bigcap_{S' \subsetneq S_0} \ff_{k_0}(\excludeSet{S'})$. Moreover, if \ref{symmetrization lemma: case 1} holds, for any $T \in \mathcal{N}_G(S_1)$, we have $T \not \subset F$ which implies $T \not \subset G$, and so $G \not \in \bigcup_{T \in \mathcal{N}_G(S_1)} \ff_{k_0}(\excludeSet{S_1} \cup \saveSet{T})$. Thus, $G \cup S_0 \in \mathcal{V}_{k_0} \vee \{S_0\}$, and so $G \cup S_0 \in \tilde{\ff}_{k_0}$.  
    
    Since $G \in \ff_{k_0}(\excludeSet{S_1}) \cap \bigcap_{S' \subsetneq S_0} \ff_{k_0}(\excludeSet{S'})$, for any $S' \not \subset S_0$ we have $G \cup S' \in \ff_{k_0}$. But $S_0 \subsetneq G \cup S'$, and, hence, $G \cup S' \in \ff_{k_0}' \subset \tilde{\ff}_{k_0}$. As a result, any subset of $F$ belongs to $\tilde{\ff}_{k_0}$.
    
    \item If $S_0 \not \subset F$, then $F \in \ff_{k_0}'$ which is down-closed.
\end{itemize}

Since we verified down-closeness, it is enough to check that there are no two families indexed by $k'$ and $k''$ that both contain a set of size $\mSet_{k', k''} + 1$ due to Claim~\ref{claim: hypergraph point of view}. Consider several cases.
\begin{itemize}
    \item If $k_0 \not \in \{k', k''\}$, then $\ff_{k'}' \subset \ff_{k'}$ and $\ff_{k''}' \subset \ff_{k''}$, and, consequently, they can not share a set of size $\mSet_{k', k''} + 1$.
    \item If $k_0 \in \{k', k''\}$, say $k'' = k_0$, and $k' \neq k_1$ then $\mathcal{V}_{k_0} \vee \{S_0\}$ and $\ff_{k'}' = \ff_{k'}(\notContainSet{S_0})$ have no common sets. At the same time, $\ff_{k_0}' \subset \ff_{k_0}$ and $\ff_{k'}' \subset \ff_{k'}$ and, thus, $\tilde \ff_{k_0} \cap \ff_{k'}'$ does not contain a set of size $\mSet_{k_0, k'} + 1$.
    \item The final case $\{k', k''\} = \{k_0, k_1\}$ appears only if the case~\ref{symmetrization lemma: case 1} holds. Suppose that there exists $F \in \tilde \ff_{k_0} \cap \ff_{k_1}'$ of size $\mSet_{k_0, k_1} + 1$. If $S_0 \subset F$, then $F \setminus S_0 \in \mathcal{V}_{k_0}$. In particular, 
    \begin{align*}
        F \setminus S_0 \not \in \bigcup_{T \in \mathcal{N}_G(S_1)} \ff_{k_0}(\excludeSet{S_1} \cup \saveSet{T})
    \end{align*}
    by the definition of $\mathcal{V}_{k_0}$. Since $|F \setminus S_0| = \mSet_{k_0, k_1} + 1 - |S_0|$, it implies $F \setminus S_0 \not \in \mathcal{R}$. At the same time, the definition of $\ff_{k_1}'$ requires
    \begin{align*}
        F \in \bigcap_{T \in \binom{[n] \setminus \probabilisticCenter_{k_0}}{\mSet_{k_0, k_1} + 1 - |S_0|} \setminus \mathcal{R}} \ff_{k_1}(\notContainSet{T}),
    \end{align*}
    that necessarily implies either $F \setminus S_0 \in \mathcal{R}$ or $F \cap \probabilisticCenter_{k_0} \neq \varnothing$. We saw the former does not hold just above. By the definition~\eqref{eq: graph symmetrization primed families definition}, sets from $\ff_{k_1}'$ do not intersect $\probabilisticCenter_{k_0}$, so the latter also leads to contradiction.
\end{itemize}

The last point is to consider the product of the modified families. In case~\ref{symmetrization lemma: case 1}, we have
\begin{align}
    \frac{|\tilde \ff_{k_0}|}{|\ff_{k_0}|} & = \frac{|\ff_{k_0}(\notContainSet{S_0})|}{|\ff_{k_0}|} + \frac{|\mathcal{V}_{k_0}|}{|\ff_{k_0}|} \nonumber \\
    & \ge \left (1 - d_{k_0}(S_0) \right ) + \left (\frac{\left |\ff_{k_0}(\excludeSet{S_1}) \cap \bigcap_{S' \subsetneq S_0} \ff_{k_0}(\excludeSet{S'}) \right |}{|\ff_{k_0}|}- \sum_{T \in \mathcal{N}_G(S_1)} d_{k_0}(S_1 \cup T) \right ). \label{eq: tilde F estimation, symmetrization lemma}
\end{align}
Since $S_1 \in \mathcal{I}_{\Delta}$, we have $\left |\ff_{k_0}(\excludeSet{S_1}) \cap \bigcap_{S' \subset S_0} \ff_{k_0}(\excludeSet{S'}) \right | \ge \left (d_{k_0}(S_0) + \Delta \right ) |\ff_{k_0}|$ by the definition. Meanwhile, by the definition of $\mathcal{R}$, we have $T \cap \probabilisticCenter_{k_0} = \varnothing$ for each $T \in \mathcal{N}_G(S_1)$. Due to Proposition~\ref{proposition: multiplicative property of degrees}, it holds that
\begin{align*}
    d_{k_0}(S_1 \cup T) \prod_{k \in [\ell] \setminus \{k_0\}} d_{k} \left ((S_1 \cup T) \cap \probabilisticCenter_{k} \right ) = O(n^{- |S_1 \cup T|}) = O(n^{- \mSet_{k_0, k_1} - 1}),
\end{align*}
so $d_{k_0}(S_1 \cup T) = O(n^{-\mSet_{k_0, k_1} - 1})$ due to Claim~\ref{claim: inner set degree}. Claim~\ref{claim: double counting} guarantees that $|\mathcal{N}_G(S_1)| = deg_G(S_1) = \frac{O(n^{\mSet_{k_0, k_1}}) \log n}{|\mathcal{I}_{\Delta}|}$, and, combining the above with inequality~\eqref{eq: tilde F estimation, symmetrization lemma}, we obtain
\begin{align}
\label{eq: tilde F result, symmetrization lemma}
    \frac{|\tilde \ff_{k_0}|}{|\ff_{k_0}|} \ge 1 - d_{k_0}(S_0) + d_{k_0}(S_0) + \Delta - \frac{O(n^{-1} \log n)}{|\mathcal{I}_{\Delta}|} = 1 + \Delta - \frac{O(n^{-1} \log n)}{|\mathcal{I}_{\Delta}|}.
\end{align}
At the same time,
\begin{align*}
    \frac{|\ff_{k_1}'|}{|\ff_{k_1}|} & = \frac{1}{|\ff_{k_1}|} \left | \ff_{k_1 \setminus k_0} \cap \bigcap_{T \in \binom{[n] \setminus \probabilisticCenter_{k_0}}{\mSet_{k_0, k_1} + 1 - |S_0|} \setminus \mathcal{R}} \ff_{k_1}(\notContainSet{T}) \right |  \\
    & = \frac{1}{|\ff_{k_1}|} \left |\ff_{k_1} \setminus \left [\ff_{k_1 \to k_0} \cup \bigcup_{T \in \binom{[n] \setminus \probabilisticCenter_{k_0}}{\mSet_{k_0, k_1} + 1 - |S_0|} \setminus \mathcal{R}} \ff_{k_1}(\saveSet{T}) \right ] \right |  \\
    & \ge 1 - d_{k_1 \to k_0} - \sum_{T \in \binom{[n] \setminus \probabilisticCenter_{k_0}}{\mSet_{k_0, k_1} + 1 - |S_0|} \setminus \mathcal{R}} d_{k_1}(T). 
\end{align*}
The sum can  be bounded by inequality~\eqref{eq: T out of R, symmetrization lemma}, and we have $d_{k_1 \to k_0} = e^{- \Omega(n)}$ according to Lemma~\ref{lemma: direction of tentacles}. Thus,
\begin{align}
    \frac{|\ff_{k_1}'|}{|\ff_{k_1}|}
    & \ge
    1 - e^{-\Omega(n)} - n^{-\mSet_{k_0, k_1} - 2} \nonumber \\
    & \ge 1 - O(n^{-\mSet_{k_0, k_1} - 2}). \label{eq: k_1 estimation, symmetrization lemma}
\end{align}

Finally, we have $|\ff_{k}'| / |\ff_{k}| \ge 1 -  \frac{\Delta / 2}{\ell - 2}$ for $k \not \in \{k_0, k_1\}$ due to the condition. Thus, 
\begin{align*}
    \frac{|\tilde \ff_{k_0}| \prod_{k \in [\ell] \setminus \{k_0\}} |\ff_k'|}{\prod_{k \in [\ell]} |\ff_k|} & \ge 
    \left (1 + \Delta -  \frac{O(n^{-1} \log n)}{|\mathcal{I}_\Delta|}\right ) \left ( 1 - O(n^{- \mSet_{k_0, k_1} - 2}) \right ) \left (1 - \frac{\Delta/2}{\ell - 2} \right )^{\ell - 2} \\
    & \ge  \left (1 + \Delta -  \frac{O(n^{-1} \log n)}{|\mathcal{I}_\Delta|}\right ) \left ( 1 - O(n^{- \mSet_{k_0, k_1} - 2}) \right ) \left (1 - \Delta / 2 \right ),  
\end{align*}
where we used bounds~\eqref{eq: tilde F result, symmetrization lemma} and~\eqref{eq: k_1 estimation, symmetrization lemma}. Due to extremality of the families $\ff_k$, the right-hand side can not be large than $1$. Thus,
\begin{align*}
    \left (1 + \Delta -  \frac{O(n^{-1} \log n)}{|\mathcal{I}_\Delta|}\right )
    \left (1 - \Delta/2 \right )
    & \le 
     1 + O(n^{- \mSet_{k_0, k_1} - 2}) , \\
    1 + \Delta -  \frac{O(n^{-1} \log n)}{|\mathcal{I}_\Delta|}
    & \le \frac{ 1 + O(n^{- \mSet_{k_0, k_1} - 2})}{1 - \Delta/2}, \\
    |\mathcal{I}_\Delta| & \le \frac{1 - \Delta/2}{\Delta (1 - \Delta) / 2 - O(n^{-\mSet_{k_0, k_1} - 2}) } \cdot O \left ( \frac{\log n}{n} \right ).
\end{align*}
If \ref{symmetrization lemma: case 2} holds, assume that $\mathcal{I}_\Delta \neq \varnothing$. Then
\begin{align*}
    \frac{|\tilde{\ff_{k_0}}|}{|\ff_{k_0}|} & = \frac{|\ff_{k_0} (\excludeSet{S_0})|}{|\ff_{k_0}|} + \frac{|\mathcal{V}_{k_0}|}{|\ff_{k_0}|} \\
    & \ge 1 - d_{k_0}(S_0) + \frac{\left |\ff_{k_0}(\excludeSet{S_1}) \cap \bigcap_{S' \subset S_0} \ff_{k_0}(\excludeSet{S'}) \right |}{|\ff_{k_0}|}\\
    & \overset{\text{definition of } \mathcal{I}_\Delta}{\ge} 1 - d_{k_0}(S_0) + d_{k_0}(S_0) + \Delta \\
    & = 1 + \Delta.
\end{align*}
Due to the condition in case~\ref{symmetrization lemma: case 2}, we have $|\ff_{k}'| \ge \left (1 - \frac{\Delta / 2}{\ell - 1} \right ) |\ff_{k}|$ for $k \in [\ell] \setminus \{k_0\}$. Thus,
\begin{align*}
    \frac{|\tilde \ff_{k_0}| \cdot \prod_{k \in [\ell] \setminus \{k_0\}} |\ff_{k}'|}{
        \prod_{k \in [\ell]} |\ff_k|
    } & \ge (1 + \Delta) \left (1 - \frac{\Delta / 2}{\ell - 1} \right )^{\ell - 1} \\ 
    & \ge (1 + \Delta ) ( 1 - \Delta/2) \\
    & = 1 + \Delta/2 - \Delta^2/2 \\
    & > 1.
\end{align*}
Thus, the assumption that $\mathcal{I}_\Delta \neq \varnothing$ was wrong.


\end{proof}

\subsection{Localization of tentacles}
\label{subsection: localization of tentacles}

In this section, we refine structure of the extremal examples. In  Section~\ref{subsection: direction of tentacles}, we showed that each extremal example forms an oriented graph. Using our octopus metaphor, it means that for any two octopuses only one puts their tentacles into another's body. Next, we prove that tentacles of different octopuses essentially do not overlap.

We assume that there exists at least one pair $\{k, k'\}$ such that $\mSet_{k, k'} > 0$. Otherwise, results of this section become meaningless. The first result is about the source in the oriented graph $T_\ell$ if it exists.

\begin{proposition}
\label{proposition: empty center of the source}
Let $\ff_k$ be families from an extremal example and $T_\ell$ be an oriented graph defined above. If $k_0 \in V(T_\ell)$ is a source and $n$ is large enough, then $\probabilisticCenter_{k_0} = \varnothing$.
\end{proposition}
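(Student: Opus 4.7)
I will argue by contradiction. Assume $\probabilisticCenter_{k_0} \ne \varnothing$ and fix $x \in \probabilisticCenter_{k_0}$; I will construct a valid $\mSet$-overlapping system $\tilde\ff_1,\ldots,\tilde\ff_\ell$ with $\prod_k|\tilde\ff_k|>\prod_k|\ff_k|$, contradicting extremality. The first observation is that $d_k(x)$ is negligible for every $k\ne k_0$. Because $k_0$ is a source, every edge incident to $k_0$ in $T_\ell$ is outgoing, so Lemma~\ref{lemma: direction of tentacles} gives $d_{k\to k_0}=e^{-\Omega(n)}$, whence $d_k(x)\le d_{k\to k_0}=e^{-\Omega(n)}$ for each $k$ adjacent to $k_0$; for $k$ not adjacent to $k_0$ we have $\mSet_{k_0,k}=0$, and combining this with $\probabilisticCenter_{k_0}\in\ff_{k_0}$ from Lemma~\ref{lemma: covering by centers} forbids any set of $\ff_k$ from containing $x$, so $d_k(x)=0$.

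If $k_0$ is isolated in $T_\ell$, then $\mSet_{k_0,k}=0$ for every $k$, forcing $\support(\ff_{k_0})$ to be disjoint from $\support(\ff_k)$ for all $k\ne k_0$. The product factorises as
\[
    \prod_k|\ff_k|\le 2^{|\probabilisticCenter_{k_0}|}\cdot s^*\bigl(n-|\probabilisticCenter_{k_0}|,\ \ell-1,\ \mSet|_{[\ell]\setminus\{k_0\}}\bigr),
\]
and by Theorem~\ref{theorem: maximal families product} the right-hand side equals $2^n$ times a polynomial in $n-|\probabilisticCenter_{k_0}|$ whose degree $\sum_{S}\mSet_S$ is strictly positive, hence strictly decreasing in $|\probabilisticCenter_{k_0}|$; extremality forces $|\probabilisticCenter_{k_0}|=0$.

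Otherwise pick any $k_1\in Out_{k_0}$ and define
\begin{align*}
    \tilde\ff_{k_0} &:= \ff_{k_0}(\bar x)\cup\bigl\{F\cup\{x\}\mid F\in\ff_{k_0}(\bar x),\ |F\cap\support(\ff_{k_1}(\bar x))|\le\mSet_{k_0,k_1}-1\bigr\},\\
    \tilde\ff_{k_1} &:= \ff_{k_1}(\bar x)\cup\bigl\{G\cup\{x\}\mid G\in\ff_{k_1}(\bar x)\bigr\},\\
    \tilde\ff_k &:= \ff_k(\bar x)\ \text{ for every other } k.
\end{align*}
Sets containing $x$ now live only in $\tilde\ff_{k_0}$ and $\tilde\ff_{k_1}$; the $\mSet$-overlapping property reduces to the original constraints, except for a pair of added sets $F\cup\{x\}\in\tilde\ff_{k_0}$ and $G\cup\{x\}\in\tilde\ff_{k_1}$ where one needs $|F\cap G|+1\le\mSet_{k_0,k_1}$, which is precisely the defining condition on $F$ (uniformly over $G\in\ff_{k_1}(\bar x)$).

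The product ratio becomes
\[
    \frac{\prod_k|\tilde\ff_k|}{\prod_k|\ff_k|}
    =\bigl(1-d_{k_0}(x)+\beta\bigr)\cdot 2(1-d_{k_1}(x))\cdot\prod_{k\ne k_0,k_1}(1-d_k(x)),
\]
where $\beta$ is the normalised count of the added sets in $\tilde\ff_{k_0}$. Down-closedness of $\ff_{k_0}$ yields $d_{k_0}(x)\le 1/2$, and the first paragraph gives $d_k(x)=e^{-\Omega(n)}$ for $k\ne k_0$. The remaining task is to show $\beta=\Omega(1/n)$: using Proposition~\ref{proposition: multiplicative property of degrees} to bound the joint degree of any $(\mSet_{k_0,k_1}+1)$-subset of $\probabilisticCenter_{k_1}$ by $O(n^{-\mSet_{k_0,k_1}-1})$, and Claim~\ref{claim: linear size of centers} to ensure $|\probabilisticCenter_{k_1}|=\Theta(n)$, a double-counting of the number of $F\in\ff_{k_0}(\bar x)$ that saturate their tentacle into $\probabilisticCenter_{k_1}$ to the full size $\mSet_{k_0,k_1}$ delivers the required lower bound. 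Plugging everything in, the ratio exceeds $1$ for $n$ large, contradicting extremality. The main obstacle is precisely the rigorous lower bound $\beta=\Omega(1/n)$: the octopus heuristic makes it transparent, but because the full structural theorem is not yet available at this point, the argument must be run purely from the tools developed so far (direction of tentacles, multiplicative property of degrees, linearity of non-source centers, and a Kruskal--Katona-style counting of ``under-saturated'' tentacles).
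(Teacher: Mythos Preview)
Your overall strategy --- remove $x$ from all families, double one neighbour, and look for a strict gain --- has the right shape, and the first paragraph (exponential smallness of $d_k(x)$ for $k\ne k_0$) is correct. The gap is in the lower bound on $\beta$. The condition you impose on $F$ when adding $F\cup\{x\}$ to $\tilde\ff_{k_0}$, namely $|F\cap\support(\ff_{k_1}(\bar x))|\le\mSet_{k_0,k_1}-1$, is far too restrictive: by maximality the extremal $\ff_{k_1}$ contains every singleton not forbidden by a zero entry of $\mSet$, so if $\mSet_{k_1,k'}\ge 1$ for all $k'$ then $\support(\ff_{k_1}(\bar x))=[n]\setminus\{x\}$ and your condition becomes $|F|\le\mSet_{k_0,k_1}-1$. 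Then $\beta\le\binom{n}{\le\mSet_{k_0,k_1}-1}/|\ff_{k_0}|$, and since $|\ff_{k_0}|\ge 2^{|\probabilisticCenter_{k_0}|}$ this need not beat the $e^{-\Omega(n)}$ losses unless you first prove $|\probabilisticCenter_{k_0}|$ is small --- but nothing so far rules out, say, $|\probabilisticCenter_{k_0}|$ linear in $n$. Your ``double-counting'' sketch bounds the fraction of $F$ with a deficient tentacle into $\probabilisticCenter_{k_1}$, which is a much weaker condition than the one you actually need; replacing the support condition by the minimal one ($|F\cap G|\le\mSet_{k_0,k_1}-1$ for all $G\in\ff_{k_1}(\bar x)$) does not obviously help either, since $G$ may itself have tentacles into $\probabilisticCenter_{k'}$ for $k'\in Out_{k_0}\cap Out_{k_1}$, and the disjointness of those from $F$'s tentacles is precisely the structural information not yet available at this point. (Your isolated-$k_0$ case has a related problem: Theorem~\ref{theorem: maximal families product} only controls $s^*$ up to a factor $1+O(n^{-1/2})$, which is too coarse to distinguish $|\probabilisticCenter_{k_0}|=1$ from $|\probabilisticCenter_{k_0}|=0$.)

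The paper sidesteps this difficulty by \emph{not} trying to put $x$ back into $\ff_{k_0}$ at all. It picks any $k_1\ne k_0$ with $\mSet_{k_1,k_2}\ne 0$ for some $k_2$, removes $x$ from every family, then adjoins $\{F\cup\{x\}:F\in\ff_{k_1\setminus k_0}\}$ to $\ff_{k_1}$ (a near-doubling, by the direction-of-tentacles lemma) and $\{G\cup\{x\}:G\subset\probabilisticCenter_{k_2}\}$ to $\ff_{k_2}$. The polynomial gain comes from the latter: $2^{|\probabilisticCenter_{k_2}|}/|\ff_{k_2}|\ge n^{-\sum_{k}\mSet_{k,k_2}}$, a bound requiring nothing beyond $\probabilisticCenter_{k_2}\in\ff_{k_2}$ and the trivial upper bound on $|\ff_{k_2}|$. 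The loss from $\ff_{k_0}$ is at most a factor $1/2$ by down-closedness, exactly compensated by the doubling of $\ff_{k_1}$, and the $n^{-\text{poly}}$ gain from $k_2$ tips the balance. This construction needs only Lemma~\ref{lemma: covering by centers} and Lemma~\ref{lemma: direction of tentacles}, with no delicate counting of under-saturated tentacles.
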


\begin{proof}
Let $k_0$ be a source and suppose the probabilistic center $\probabilisticCenter_{k_0}$ of $\ff_{k_0}$ is not empty. Take $x_0 \in \probabilisticCenter_{k_0}$. Consider the families $\mathcal G_k = \ff_k (\overline{x_0})$. Due to Lemma~\ref{lemma: direction of tentacles}, if $k \in [\ell] \setminus \{k_0\}$, then $|\mathcal G_k| \ge \left (1 - e^{-\Omega(n)} \right ) |\ff_k|$, and $|\mathcal G_{k_0}| \ge \frac{1}{2} |\ff_{k_0}|$ because of down-closeness. Next, consider an arbitrary index $k_1 \neq k_0$ such that $\mSet_{k_1, k_2} \neq 0$ for some $k_2$. Define
\begin{align*}
    \mathcal G_k' = 
    \begin{cases}
        \mathcal G_k, & \text{if } k \in [\ell] \setminus \{k_1, k_2\}, \\
        \mathcal G_k \cup 
         \{ F \cup  \{x_0\} \mid F \in \ff_{k \setminus k_0} \}, & \text{if } k = k_1, \\
        \mathcal G_k \cup 
        \{ F \cup  \{x_0\} \mid F \subset \probabilisticCenter_k \}, & \text{if } k = k_2.
    \end{cases}
\end{align*}

In this way, we obtain that $|\mathcal G_{k}'| \ge \left (1 - e^{- \Omega(n)} \right )|\ff_k|$ if $k \in [\ell] \setminus \{k_0, k_1, k_2\}$, and $|\mathcal G_{k_1}'| = (2 - e^{-\Omega(n)})|\ff_{k_1}|$ due to Lemma~\ref{lemma: direction of tentacles}. For $k = k_2$ we have
\begin{align*}
    |\ff_{k_2}| & \le n^{\sum_{k \in [\ell] \setminus k_2} \mSet_{k, k_2}} \cdot 2^{|\probabilisticCenter_{k_2}|},
\end{align*}
and, consequently,
\begin{align*}
    \frac{
        |\mathcal G_{k_2}'|
    }{
        |\ff_{k_2}|
    }
    \ge
    \begin{cases}
        1 - e^{-\Omega(n)} + n^{-\sum_{k \in [\ell] \setminus k_2} \mSet_{k, k_2}}, & \text{if } k_0 \neq k_2\\
        \frac 1 2 + n^{-\sum_{k \in [\ell] \setminus k_2} \mSet_{k, k_2}}, & \text{if } k_0 = k_2.
    \end{cases}
\end{align*}
Analogously,
\begin{align*}
    \frac{
        |\mathcal G_{k_0}'|
    }{
        |\ff_{k_0}|
    }
    \ge 
    \begin{cases}
        \frac 1 2 , & \text{if } k_0 \neq k_2 \\
        \frac{1}{2} +n^{-\sum_{k \in [\ell] \setminus k_2} \mSet_{k, k_2}}, & \text{if } k_0 = k_2.
    \end{cases}
\end{align*}
Thus, if $k_2 = k_0$, we have
\begin{align*}
    \prod_{k \in [\ell]} |\mathcal G_{k}'| \ge 2 \left (1 - e^{-\Omega(n)} \right ) \left (\frac 1 2 + n^{-\sum_{k \in [\ell] \setminus k_2} \mSet_{k, k_2}} \right ) \left (1 - e^{-\Omega(n)} \right )^{\ell - 2} \prod_{k \in \ell} |\ff_k| > \prod_{k \in [\ell]} |\ff_k|.
\end{align*}
If $k_2 \neq k_0$, we have
\begin{align*}
     \prod_{k \in [\ell]} |\mathcal G_{k}'| \ge 2 \left (1 - e^{-\Omega(n)} \right ) \left (1 - e^{-\Omega(n)} + n^{-\sum_{k \in [\ell] \setminus k_0} \mSet_{k, k_2}} \right ) \frac{1}{2} \left (1 - e^{-\Omega(n)} \right )^{\ell - 2} \prod_{k \in \ell} |\ff_k| > \prod_{k \in [\ell]} |\ff_k|.
\end{align*}
 Both cases contradict the extremality of $\ff_k$ since families $\mathcal G_{k}'$'s are $\mSet$-overlapping. Thus, the source has an empty center. 
\end{proof}

Together with Lemma~\ref{lemma: symmetrization argument}, Proposition~\ref{proposition: empty center of the source} allows to give a refined description of the structure of the families. We use the following notation: for an extremal family $\ff_k$ and an oriented graph $T_\ell$ define
\begin{align*}
    In_k & = \{k' \mid (k', k) \in E(T_\ell) \}, \\
    Out_k & = \{k' \mid (k, k') \in E(T_\ell) \}.
\end{align*}
Then, we state the following.

\begin{corollary}
\label{corollary: tentacles structure}
For two distinct families $\ff_k, \ff_{k'}$ from the extremal example, define
\begin{align*}
    A_{k, k'} = \left \{ x \in \probabilisticCenter_{k'} \mid d_k(x) \ge \frac{n^{-1}}{4 \ell - 2} \right \}.
\end{align*}
Then, for any edge $(k, k')$ in the oriented graph $T_\ell$, the set $A_{k, k'}$  has size $\Omega(n)$, and \newline $(A_{k, k'})_{(k, k') \in E(T_\ell)}$ is a partition of $[n]$. In addition, for each $k$ the center $\probabilisticCenter_k$ is partitioned by $A_{k, k'}$, $k' \in In_k$.
\end{corollary}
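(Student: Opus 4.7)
The plan is to establish the three parts of the corollary (restriction to $k \in In_{k'}$, disjointness, covering with linear size) by combining Lemma~\ref{lemma: direction of tentacles}, Proposition~\ref{proposition: multiplicative property of degrees}, and Lemma~\ref{lemma: symmetrization argument}, each deployed for a different component of the conclusion.

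First I would show that for $x \in \probabilisticCenter_{k'}$, only $k \in In_{k'}$ can place $x$ into $A_{k, k'}$. Indeed, if $k \in Out_{k'}$, then by Definition~\ref{definition: oriented graph} $d_k(x) \le d_{k \to k'} = e^{-\Omega(n)}$, well below the threshold $\frac{n^{-1}}{4\ell-2}$ for large $n$; if $k$ is not adjacent to $k'$ in $T_\ell$, then $\mSet_{k, k'}=0$ together with $\{x\} \in \ff_{k'}$ forces $d_k(x)=0$. Thus the problem reduces to showing that $\{A_{k, k'}\}_{k \in In_{k'}}$ partitions $\probabilisticCenter_{k'}$ for each $k'$ with $In_{k'} \neq \varnothing$ (the case $In_{k'} = \varnothing$ is handled by Proposition~\ref{proposition: empty center of the source}).

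Next I would establish disjointness by a triple application of the multiplicative degree bound. Suppose $x \in A_{k_1, k'} \cap A_{k_2, k'}$ for distinct $k_1, k_2 \in In_{k'}$. Applying Proposition~\ref{proposition: multiplicative property of degrees} to $K = \{k', k_1, k_2\}$ with $F_{k'} = F_{k_1} = F_{k_2} = \{x\}$ yields
\begin{equation*}
    d_{k'}(x)\, d_{k_1}(x)\, d_{k_2}(x) \le C_D\, n^{-3}.
\end{equation*}
Since $d_{k'}(x) \ge \frac{1}{2} - \alpha_n \ge \frac{1}{3}$ for large $n$ and $d_{k_1}(x), d_{k_2}(x) \ge \frac{1}{(4\ell-2) n}$, this would force $n \le 3 C_D (4\ell-2)^2$, contradicting the largeness of $n$. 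Combined with the disjointness of the centers (Lemma~\ref{lemma: covering by centers}), the whole family $(A_{k, k'})_{(k, k') \in E(T_\ell)}$ is pairwise disjoint.

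For covering and linear size I would combine a double counting with the symmetrization argument. Because $\probabilisticCenter_{k'} \in \ff_{k'}$ forces $|F \cap \probabilisticCenter_{k'}| \le \mSet_{k, k'}$ for every $F \in \ff_k$, while Lemma~\ref{lemma: direction of tentacles} places a $1-O(n^{-1/2})$ fraction of $F \in \ff_k$ at equality, we have
\begin{equation*}
    \sum_{x \in \probabilisticCenter_{k'}} d_k(x) \;=\; \EE_{F \in \ff_k}|F \cap \probabilisticCenter_{k'}| \;=\; \mSet_{k, k'}\bigl(1 - o(1)\bigr).
\end{equation*}
For covering, argue by contradiction: if $x_0 \in \probabilisticCenter_{k'}$ lies in no $A_{k, k'}$, then $d_{\tilde k}(x_0) < \frac{n^{-1}}{4\ell-2}$ for every $\tilde k \in In_{k'}$ and $d_{\tilde k}(x_0) = e^{-\Omega(n)}$ or $0$ for the remaining $\tilde k \neq k'$. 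This is exactly what case~(a) of Lemma~\ref{lemma: symmetrization argument} needs, taken with $k_0 = k \in In_{k'}$, $k_1 = k'$, $\sDomain_k = \binom{\probabilisticCenter_{k'}}{1}$, $S_0 = \{x_0\}$, and any $\Delta$ in the non-empty window $\left[\frac{2(\ell-2)}{4\ell-2},\, \frac{4\ell-3}{4\ell-2}\right] \cdot n^{-1}$. The lemma then yields $|\mathcal{I}_\Delta| = O(n^{-1} \log n)$, and splitting the double-counting sum into a low part (mass $\le |\probabilisticCenter_{k'}|(d_k(x_0) + \Delta) \le \frac{1}{4\ell-2} + n\Delta$) and a high part (mass $\le |\mathcal{I}_\Delta| \cdot O(n^{-1}) = o(1)$, using the universal bound $d_k(y) = O(n^{-1})$ from Proposition~\ref{proposition: multiplicative property of degrees}) yields $\mSet_{k, k'}(1-o(1)) \le \frac{1}{4\ell-2} + n\Delta + o(1) < 1$, contradicting $\mSet_{k, k'} \ge 1$. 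The lower bound $|A_{k, k'}| = \Omega(n)$ follows from the same double counting: the mass on $\probabilisticCenter_{k'} \setminus A_{k, k'}$ is at most $\frac{1}{4\ell-2}$, so a residual mass $\ge \frac{1}{2}$ sits on $A_{k, k'}$ at $O(n^{-1})$ per element.

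The delicate part is the calibration in the last step: case~(a) of Lemma~\ref{lemma: symmetrization argument} forces $\Delta \ge \frac{2(\ell-2)}{(4\ell-2) n}$ so that its hypothesis $d_{\tilde k}(x_0) \le \frac{\Delta/2}{\ell-2}$ is met, whereas the double-counting contradiction requires $n\Delta < \frac{4\ell-3}{4\ell-2}$; the inequality $2(\ell-2) < 4\ell-3$ makes this window non-empty for every $\ell$. The constant $\frac{1}{4\ell-2}$ in the definition of $A_{k, k'}$ is chosen precisely to balance these two constraints, and verifying that a single $\Delta$ simultaneously validates the symmetrization hypothesis and closes the double-counting contradiction is the one place where the argument requires a careful choice of parameters.
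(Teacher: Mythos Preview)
Your proposal is correct and follows essentially the same route as the paper: both apply Lemma~\ref{lemma: symmetrization argument}, case~(a), with $\sDomain_{k}=\binom{\probabilisticCenter_{k'}}{1}$ and $S_0=\{x_0\}$ to force $|\mathcal I_\Delta|=O(\log n)$, and both feed in the double count $\sum_{y\in\probabilisticCenter_{k'}}d_k(y)=\mSet_{k,k'}(1-o(1))$ coming from Lemma~\ref{lemma: direction of tentacles}. The only cosmetic difference is in extracting the contradiction: the paper fixes $\Delta=n^{-1}\bigl(\tfrac12-\tfrac{1}{4\ell-2}\bigr)$ and observes that the set $J=\{y\in\probabilisticCenter_{k'}:d_k(y)\ge n^{-1}/2\}$ has size $\Omega(n)$ and is entirely contained in $\mathcal I_\Delta$, whereas you bound the sum $\sum_y d_k(y)$ by splitting over $\mathcal I_\Delta$ and its complement and then calibrate $\Delta$ in a window; the paper's version sidesteps the window bookkeeping but is otherwise the same argument.
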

\begin{proof}
Given $x$, there is $k_0$ such that $x \in \probabilisticCenter_{k_0}$ due to Lemma~\ref{lemma: covering by centers}. Proposition~\ref{proposition: empty center of the source} guarantees that there is $k_1 \in In_{k_0}$. Moreover, from Lemma~\ref{lemma: direction of tentacles}
\begin{align*}
    \sum_{y \in \probabilisticCenter_{k_0}} d_{k_1}(y) \ge d_{k_1 \to k_0} = 1 - O(n^{-1/2}),
\end{align*}
and, consequently, the set $J = \{y \in \probabilisticCenter_{k_0} \mid d_{k_1}(y) \ge n^{-1}/2 \}$ has size $\Omega(n)$. 

Assume that $x \not \in \bigcup_{(k, k') \in E(T_\ell)} A_{k, k'}$. Then, $d_{k'}(x) < n^{-1}/(4\ell - 2)$ for any $k' \neq k_0$. Apply Lemma~\ref{lemma: symmetrization argument} with $S_0 = \{x\}$, $\sDomain_k = \binom{\probabilisticCenter_{k_0}}{1}$ and $\Delta = n^{-1}(1/2 - (4\ell - 2)^{-1})$. Then $\binom{J}{1} \subset \mathcal{I}_\Delta$. On the one hand, $\mathcal{I}_\Delta$ has logarithmic size, and on the other hand, $|\mathcal{I}_\Delta| \ge |J| = \Omega(n)$, a contradiction.

Obviously, $\probabilisticCenter_{k_0}$ is partitioned by sets $A_{k, k_0}$ for $k \in In_{k_0}$. Since $J \subset A_{k_1, k_0}$ by definition, $A_{k_1, k_0}$ has size $\Omega(n)$.
\end{proof}

Now, it is easy to obtain the following improvement of Lemma~\ref{lemma: element excluding}:
\begin{proposition}
\label{proposition: subfamilies 1/n}
Let $\ff_k$, $k \in [\ell]$ be families from the extremal example. Define
\begin{align*}
    Supp_k = \bigsqcup_{k' \in In_k} A_{k', k}\sqcup \bigsqcup_{k' \in Out_k} A_{k, k'} = \probabilisticCenter_{k} \sqcup \bigsqcup_{k' \in Out_k} A_{k, k'}.
\end{align*}
Then, for each $k$
\begin{align*}
    \left | \ff_k|_{Supp_k} \right | \ge \left (1 - \delta_n \right ) |\ff_k|,
\end{align*}
where $\delta_n = O(n^{-1})$.
\end{proposition}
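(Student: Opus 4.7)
The plan is to bound the fraction of sets in $\ff_k$ that meet the complement $[n] \setminus Supp_k$, and show this is $O(n^{-1})$. Since restriction only decreases size by at most the number of sets that actually differ under restriction, we have $|\ff_k|_{Supp_k}| \ge |\{F \in \ff_k : F \subset Supp_k\}|$, so it suffices to produce the bound $|\{F \in \ff_k : F \cap ([n] \setminus Supp_k) \neq \varnothing\}| \le \delta_n |\ff_k|$. By the union bound, this is at most $\sum_{x \notin Supp_k} |\ff_k(x)| = |\ff_k| \sum_{x \notin Supp_k} d_k(x)$, reducing everything to showing $\sum_{x \notin Supp_k} d_k(x) = O(n^{-1})$.

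First, I would identify what elements lie outside $Supp_k$. By Corollary~\ref{corollary: tentacles structure}, the sets $A_{k_1, k_2}$ indexed by $(k_1, k_2) \in E(T_\ell)$ partition $[n]$, and the pieces that belong to $Supp_k$ are exactly those with $k_1 = k$ (so $k_2 \in Out_k$) or $k_2 = k$ (so $k_1 \in In_k$). Consequently, every $x \notin Supp_k$ lies in some $A_{k_1, k_2}$ with $k \notin \{k_1, k_2\}$.

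Next, I would invoke Proposition~\ref{proposition: multiplicative property of degrees} with $K = \{k, k_1, k_2\}$ and $F_j = \{x\}$ for $j \in K$. The pairwise intersections contribute $1$ each, giving
\begin{align*}
    d_k(x)\, d_{k_1}(x)\, d_{k_2}(x) \le C_D\, n^{-3}.
\end{align*}
Because $x \in A_{k_1, k_2} \subset \probabilisticCenter_{k_2}$, Lemma~\ref{lemma: covering by centers} yields $d_{k_2}(x) \ge \tfrac{1}{2} - \alpha_n$, and the definition of $A_{k_1, k_2}$ in Corollary~\ref{corollary: tentacles structure} yields $d_{k_1}(x) \ge \frac{n^{-1}}{4\ell - 2}$. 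Dividing through gives $d_k(x) = O(n^{-2})$, with the implicit constant depending only on $\ell$ and $\mSet$.

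Finally, summing over the at most $n$ elements outside $Supp_k$ produces $\sum_{x \notin Supp_k} d_k(x) \le n \cdot O(n^{-2}) = O(n^{-1})$, which gives the claim with $\delta_n = O(n^{-1})$. The argument is largely routine once one has Corollary~\ref{corollary: tentacles structure} in hand; the only subtle point is the identification of $[n] \setminus Supp_k$ with the union of $A_{k_1, k_2}$ for $k \notin \{k_1, k_2\}$, and the triple-index application of the multiplicative degree bound to obtain $d_k(x) = O(n^{-2})$ uniformly rather than the weaker $O(n^{-1})$ one would get from a pairwise estimate.
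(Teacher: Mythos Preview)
Your proof is correct and follows essentially the same approach as the paper: identify each $x \notin Supp_k$ as lying in some $A_{k_1,k_2}$ with $k \notin \{k_1,k_2\}$, use the lower bounds $d_{k_2}(x) = \Theta(1)$ and $d_{k_1}(x) = \Omega(n^{-1})$ from Lemma~\ref{lemma: covering by centers} and Corollary~\ref{corollary: tentacles structure}, feed these into the three-index case of Proposition~\ref{proposition: multiplicative property of degrees} to obtain $d_k(x)=O(n^{-2})$, and then sum over at most $n$ such elements. The only cosmetic difference is that the paper names the center index $k_1$ and the tentacle index $k_2$, the reverse of your convention.
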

\begin{proof}
Consider some particular $k_0 \in [\ell]$. Let $x \not \in Supp_{k_0}$. Then there are $k_1$ and $k_2$, $k_0 \not \in \{k_1, k_2\}$ such that $d_{k_1}(x) = \Theta(1)$ due to Lemma~\ref{lemma: covering by centers} and $d_{k_2}(x) = \Theta(n^{-1})$ according to Corollary~\ref{corollary: tentacles structure}. From Proposition~\ref{proposition: multiplicative property of degrees} we have
\begin{align*}
    d_{k_0}(x) d_{k_1}(x) d_{k_2}(x) = O(n^{-3}),
\end{align*}
and, consequently, $d_{k_0}(x) = O(n^{-2})$. Finally, we get
\begin{align*}
    \frac{|\ff_{k_0}|_{Supp_{k_0}}|}{|\ff_{k_0}|} \ge 1 - \sum_{x \not \in Supp_{k_0}} d_{k_0}(x) = 1 - O(n^{-1}). & \qedhere
\end{align*}
\end{proof}

One nice property of $\ff_k|_{Supp_k}$ is that $\ff_k|_{Supp_k} \subset \hh_k$, where
\begin{align}
\label{eq: definition of hh_k}
    \hh_k = 2^{\bigcup_{k' \in In_k} A_{k', k}} \vee \bigvee_{k' \in Out_k} \binom{A_{k, k'}}{\le \mSet_{k, k'}}.
\end{align}
For families $\hh_k, k\in [\ell]$, the following claim holds.
\begin{claim}
\label{claim: hh_k approximation quality}
Let $\ff_k$ be an extremal family. Then $\hh_k$ defined in~\eqref{eq: definition of hh_k} has cardinality
\begin{align*}
    |\hh_k| = (1 - O(n^{-1})) |\ff_k|.
\end{align*}
\end{claim}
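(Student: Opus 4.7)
The plan is to establish the equality $|\hh_k| = (1 - O(n^{-1}))|\ff_k|$ in two steps: first a straightforward lower bound on $|\hh_k|$ from Proposition~\ref{proposition: subfamilies 1/n}, then a matching upper bound obtained by showing that the $\hh_k$'s are themselves $\mSet$-overlapping, so that their product cannot exceed $\targetFunction = \prod_k |\ff_k|$.

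\smallskip

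For the lower bound, the key observation is $\ff_k|_{Supp_k} \subset \hh_k$. Indeed, take any $F \in \ff_k$: the portion $F \cap \bigsqcup_{k' \in In_k} A_{k',k}$ lies in $2^{\probabilisticCenter_k}$, which is the ``body'' factor of $\hh_k$; and for $k' \in Out_k$, since $\probabilisticCenter_{k'} \in \ff_{k'}$ by Lemma~\ref{lemma: covering by centers}, the $\mSet$-overlapping property forces $|F \cap A_{k,k'}| \le |F \cap \probabilisticCenter_{k'}| \le \mSet_{k,k'}$, matching the ``tentacle'' factor. Combined with Proposition~\ref{proposition: subfamilies 1/n}, this yields
\begin{align*}
    |\hh_k| \ge |\ff_k|_{Supp_k}| \ge (1 - O(n^{-1}))|\ff_k|.
\end{align*}

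\smallskip

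For the upper bound, I first claim that $\hh_1, \ldots, \hh_\ell$ are $\mSet$-overlapping, so that $\prod_k |\hh_k| \le \targetFunction$. For distinct $k, k'$, sets $F \in \hh_k$ and $F' \in \hh_{k'}$ are supported on parts $A_{a,b}$ of the partition with either $a = k$, $b \in Out_k$ or $a \in In_k$, $b = k$ (and analogously for $F'$). A case analysis on the four combinations shows the only common parts are $A_{k,k'}$ (if $(k,k') \in E(T_\ell)$) or $A_{k',k}$ (if $(k',k) \in E(T_\ell)$); in each case the ``out-side'' factor in the corresponding $\hh$ imposes $|F \cap F'| \le \mSet_{k,k'}$. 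If $\mSet_{k,k'} = 0$ there is no edge, no shared part, and hence $F \cap F' = \varnothing$. Thus $\prod_k |\hh_k| \le \targetFunction = \prod_k |\ff_k|$.

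\smallskip

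Finally, writing $r_k := |\hh_k|/|\ff_k|$, the lower bound gives $r_k \ge 1 - O(n^{-1})$ for each $k$, and the product inequality gives $\prod_k r_k \le 1$. Hence for every fixed $k$,
\begin{align*}
    r_k \le \prod_{k' \neq k} r_{k'}^{-1} \le (1 - O(n^{-1}))^{-(\ell - 1)} = 1 + O(n^{-1}),
\end{align*}
so $r_k = 1 \pm O(n^{-1})$, which is what the claim asserts. The only non-routine step is the verification that the $\hh_k$'s form an $\mSet$-overlapping system, and this is settled by the case analysis on the partition described above.
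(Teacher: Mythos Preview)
Your proof is correct and follows essentially the same approach as the paper: both use the containment $\ff_k|_{Supp_k}\subset\hh_k$ together with Proposition~\ref{proposition: subfamilies 1/n} for the lower bound, and the fact that the $\hh_k$'s are themselves $\mSet$-overlapping (hence $\prod_k|\hh_k|\le\prod_k|\ff_k|$) for the upper bound, then squeeze each individual ratio. The paper compresses this into the single chain $\prod_k|\ff_k|_{Supp_k}|\le\prod_k|\hh_k|\le\prod_k|\ff_k|$ without spelling out the $\mSet$-overlapping verification or the ratio extraction; you have simply made those steps explicit.
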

\begin{proof}
    The proof is a consequence of the following chain of inequalities:
    \begin{align*}
        \prod_{k \in [\ell]} \left |\ff_{k} |_{Supp_k} \right | \le \prod_{k \in [\ell]} |\hh_k| \le \prod_{k \in [\ell]} |\ff_k|.
    \end{align*}
    Since $|\ff_{k}|_{Supp_k}| = (1 - O(n^{-1})) |\ff_k|$ for any $k$, the statement holds.
\end{proof}

Note, that the degrees $|\hh_k(S)|$ for different $S$ can be expressed preciesly as a function of $A_{k, k'}, k' \in Out_{k}$. For example, we have 
\begin{align*}
    \frac{|\hh_k(x)|}{|\hh_k|} = \frac{
        \binom{|A_{k, k'} \setminus \{x\}|
            }{
                \le \mSet_{k,k'} - 1
            }
    }{
        \binom{|A_{k, k'}|}{\le \mSet_{k,k'}}
    } = \frac{\mSet_{k, k'}}{|A_{k, k'}|} \left (1 + O(n^{-1}) \right ).
\end{align*}
for any $(k, k') \in E(T_\ell)$ and $x \in A_{k, k'}$.  In what follows, we show that for any set $S$ of cardinality at most $\sum_{k, k'} \mSet_{k, k'}$, we have
\begin{align*}
    \frac{|\ff_k(S)|}{|\ff_k|} \approx \frac{|\hh_k(S)|}{|\hh_k|},
\end{align*}
for an extremal $\ff_k$. However, we start by the following simple claim.
\begin{claim}
\label{claim: 1 element minimum}
Suppose $\ff_k, k\in [\ell]$ form an extremal example. Then, for any edge $(k_1, k_2)$ in the corresponding oriented graph $T_\ell$, it holds that
\begin{align*}
    \min_{x \in A_{k_1, k_2}} d_{k_1}(x) \le \frac{\mSet_{k_1, k_2}}{|A_{k_1, k_2}|} + O \left (n^{-3/2} \right ).
\end{align*}
\end{claim}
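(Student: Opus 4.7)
The plan is to prove the claim by a straightforward double-counting argument, without invoking the symmetrization machinery of Lemma~\ref{lemma: symmetrization argument}. The key observation is that the $\mSet$-overlapping property gives a clean uniform bound on $|F \cap A_{k_1, k_2}|$ for every $F \in \ff_{k_1}$, which immediately yields an average-degree bound.

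First, I would note that by Lemma~\ref{lemma: covering by centers}, we have $\probabilisticCenter_{k_2} \in \ff_{k_2}$. Applying the $\mSet$-overlapping property to the pair $\ff_{k_1}, \ff_{k_2}$ with the sets $F$ and $\probabilisticCenter_{k_2}$ then gives
\begin{align*}
    |F \cap \probabilisticCenter_{k_2}| \le \mSet_{k_1, k_2} \qquad \text{for every } F \in \ff_{k_1}.
\end{align*}
By Corollary~\ref{corollary: tentacles structure}, the set $A_{k_1, k_2}$ is a subset of $\probabilisticCenter_{k_2}$, so $|F \cap A_{k_1, k_2}| \le \mSet_{k_1, k_2}$ as well.

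Next, I would sum over all $F \in \ff_{k_1}$ and interchange the order of summation:
\begin{align*}
    \sum_{x \in A_{k_1, k_2}} d_{k_1}(x)
    = \frac{1}{|\ff_{k_1}|} \sum_{x \in A_{k_1, k_2}} |\ff_{k_1}(x)|
    = \frac{1}{|\ff_{k_1}|} \sum_{F \in \ff_{k_1}} |F \cap A_{k_1, k_2}|
    \le \mSet_{k_1, k_2}.
\end{align*}
Since the minimum is at most the average, this yields
\begin{align*}
    \min_{x \in A_{k_1, k_2}} d_{k_1}(x) \le \frac{\mSet_{k_1, k_2}}{|A_{k_1, k_2}|},
\end{align*}
which is in fact strictly stronger than the stated inequality (the $O(n^{-3/2})$ slack is not consumed). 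Note that $|A_{k_1, k_2}| = \Omega(n)$ by Corollary~\ref{corollary: tentacles structure}, so the ratio $\mSet_{k_1, k_2}/|A_{k_1, k_2}|$ is indeed $\Theta(n^{-1})$, matching the order suggested by the approximation $d_{k_1}(x) \approx |\hh_{k_1}(x)|/|\hh_{k_1}|$ observed just before the claim.

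There is essentially no technical obstacle here; the argument is pure double counting. The only care required is to correctly invoke the earlier structural results (Lemma~\ref{lemma: covering by centers} to place $\probabilisticCenter_{k_2}$ inside $\ff_{k_2}$, and Corollary~\ref{corollary: tentacles structure} to place $A_{k_1, k_2}$ inside $\probabilisticCenter_{k_2}$). The looser error term in the statement is presumably included for convenience in the applications that follow.
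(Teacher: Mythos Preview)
Your proof is correct and follows essentially the same idea as the paper: use $\probabilisticCenter_{k_2}\in\ff_{k_2}$ together with the $\mSet$-overlapping property to bound $|F\cap A_{k_1,k_2}|\le\mSet_{k_1,k_2}$ for every $F\in\ff_{k_1}$, then double count and take the minimum to be at most the average. Your version is in fact a bit more direct than the paper's, which sums over all of $\probabilisticCenter_{k_2}$ and then separates off the contribution from $\probabilisticCenter_{k_2}\setminus A_{k_1,k_2}$; by summing only over $A_{k_1,k_2}$ from the start you avoid that detour and obtain the inequality without needing the $O(n^{-3/2})$ slack at all.
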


\begin{proof}
    Note that
    \begin{align*}
        d_{k_1 \to k_2}^{(\ge \mSet_{k_1, k_2})} & = \sum_{S \in \binom{\probabilisticCenter_{k_2}}{\mSet_{k_1, k_2}}} d_{k_2}(S) \ge \frac{1}{\mSet_{k_1, k_2}} \sum_{x \in \probabilisticCenter_{k_2}} d_{k_1}(x) -  d_{k_1 \to k_2}^{(< \mSet_{k_1, k_2})},
    \end{align*}
    where the last inequality follows from the double counting. Due to Lemma~\ref{lemma: direction of tentacles}, we have $d_{k \to k'}^{(\ge \mSet_{k, k'})} \le 1 - O(n^{-1/2})$, and, consequently, we have
    \begin{align}
    \label{eq: sum over probabilistic set, minimal degree claim}
        \sum_{x \in \probabilisticCenter_{k_2}} d_{k_1}(x) \le \mSet_{k_1, k_2} - O(n^{-1/2}).
    \end{align}
    If $x \in \probabilisticCenter_{k_2} \setminus A_{k_1, k_2}$, then there exists $k_3 \in In_{k_2}$ such that $d_{k_3}(x) = \Omega(n^{-1})$ due to Corollary~\ref{corollary: tentacles structure}. Due to Proposition~\ref{proposition: multiplicative property of degrees}, we have
    \begin{align*}
        d_{k_3}(x) d_{k_2}(x) d_{k_1}(x) = O(n^{-3}).
    \end{align*}
    Since $d_{k_1}(x) = \Omega(1)$ due to the definition of $\probabilisticCenter_k, k \in [\ell]$, we get $d_{k_2}(x) = O(n^{-2})$. Thus, the sum~\eqref{eq: sum over probabilistic set, minimal degree claim} can be decomposed as follows:
    \begin{align*}
        \mSet_{k_1, k_2} - O(n^{-1/2}) \le \sum_{x \in \probabilisticCenter_{k_2}} d_{k_1}(x) =\sum_{x \in A_{k_1, k_2}} d_{k_1}(x) + \sum_{x \in \probabilisticCenter_{k_2} \setminus A_{k_1, k_2}} d_{k_1}(x) = \sum_{x \in A_{k_1, k_2}} d_{k_1}(x) + O(n^{-1}).
    \end{align*}
    Hence, we obtain
    \begin{align*}
        \min_{x \in A_{k_1, k_2}} d_{k_1}(x) \le \frac{\mSet_{k_1, k_2}}{|A_{k_1, k_2}|} + O(n^{-3/2}),
    \end{align*}
    where we used the fact that $|A_{k_1, k_2}| = \Omega(n)$ due to Corollary~\ref{corollary: tentacles structure}. 
\end{proof}

Since we want to prove $\mathcal H_k \subset \ff_k$, the natural question is arising: how families behave outside $Supp_k$? We will provide the answer after the following auxiliary proposition that continues Claim~\ref{claim: inner set degree} in some sense.

\begin{proposition}
\label{proposition: inner degree with an element}
Let $\ff_k, k \in [\ell]$ form an extremal example and suppose that $(k_1, k_2)$ is an edge in the corresponding oriented graph $T_\ell$. Then a set
\begin{align*}
    I_T^{k_1, k_2} = \{x \in A_{k_1,k_2} \mid d_{k_1}(T \cup \{x\}) \le 2^{-|T| - 1} d_{k_1}(x) \}
\end{align*}
has cardinality $O(\sqrt{n \log n})$ uniformly over $T \subset \binom{\probabilisticCenter_{k_1}}{\le \max_{S} \mSet_S}$.
\end{proposition}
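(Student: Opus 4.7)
The plan is to extend Claim~\ref{claim: inner set degree} by way of a preimage-counting bijection and then a short double-counting argument. First, I would observe that for any $T \subset \probabilisticCenter_{k_1}$ the map $\varphi_T : F \mapsto F \cup T$ is $2^{|T|}$-to-$1$ from $\{F \in \ff_{k_1} : F \cup T \in \ff_{k_1}\}$ onto $\ff_{k_1}(\saveSet{T})$: for any $F' \supset T$ the preimages are exactly the sets $F' \setminus T'$ with $T' \subset T$, all of which lie in $\ff_{k_1}$ by down-closedness. Combining this identity with the lower bound $d_{k_1}(T) \ge 2^{-|T|} - 2\alpha_n$ from Claim~\ref{claim: inner set degree}, the ``bad'' set
\begin{align*}
    B^T := \{F \in \ff_{k_1} : F \cup T \notin \ff_{k_1}\}
\end{align*}
has size at most $(1 - 2^{|T|} d_{k_1}(T))|\ff_{k_1}| \le 2^{|T|+1}\alpha_n |\ff_{k_1}| = O(|\ff_{k_1}|/n)$, uniformly over $T$ of bounded size.

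Next, I would restrict the same identity to sets containing a fixed $x \in A_{k_1, k_2}$ (note $x \notin T$, since $A_{k_1, k_2} \subset \probabilisticCenter_{k_2}$ while $T \subset \probabilisticCenter_{k_1}$). This yields
\begin{align*}
    |\{F \in \ff_{k_1}(x) : F \cup T \notin \ff_{k_1}\}| = \bigl(d_{k_1}(x) - 2^{|T|} d_{k_1}(T \cup \{x\})\bigr) |\ff_{k_1}|,
\end{align*}
so membership in $I_T^{k_1, k_2}$ is equivalent to the statement ``more than half of the members of $\ff_{k_1}(x)$ lie in $B^T$''. Summing this inequality over $x \in I_T^{k_1, k_2}$ and swapping the order of summation gives
\begin{align*}
    \tfrac{1}{2} \sum_{x \in I_T^{k_1, k_2}} |\ff_{k_1}(x)| \;<\; \sum_{F \in B^T} |F \cap I_T^{k_1, k_2}| \;\le\; \mSet_{k_1, k_2} |B^T|,
\end{align*}
where the last step uses $|F \cap A_{k_1, k_2}| \le |F \cap \probabilisticCenter_{k_2}| \le \mSet_{k_1, k_2}$ from the $\mSet$-overlapping property.

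Finally, Corollary~\ref{corollary: tentacles structure} supplies the lower bound $d_{k_1}(x) \ge (4\ell - 2)^{-1} n^{-1}$ for $x \in A_{k_1, k_2}$, i.e.\ $|\ff_{k_1}(x)| = \Omega(|\ff_{k_1}|/n)$. Substituting this into the left-hand side and the bound $|B^T| = O(|\ff_{k_1}|/n)$ on the right, the common factor $|\ff_{k_1}|/n$ cancels and one obtains $|I_T^{k_1, k_2}| = O(1)$, which a fortiori gives the required $O(\sqrt{n \log n})$. The implicit constants depend only on $\mSet$, $\ell$, and the constant $C$ from Lemma~\ref{lemma: covering by centers}, making the estimate uniform in $T$.

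The only non-routine ingredient is the preimage counting in Step~1, which crucially relies on both the down-closedness of $\ff_{k_1}$ and the fact that $T$ lies inside $\probabilisticCenter_{k_1}$ so that Claim~\ref{claim: inner set degree} is applicable. Everything else is a single double-counting swap, so I do not anticipate any genuine obstacle beyond careful bookkeeping.
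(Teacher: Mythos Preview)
Your argument is correct and, in fact, yields the much stronger bound $|I_T^{k_1,k_2}| = O(1)$. The key identities are exactly right: the $2^{|T|}$-to-$1$ preimage count gives $|B^T| = (1 - 2^{|T|}d_{k_1}(T))|\ff_{k_1}| \le 2^{|T|+1}\alpha_n |\ff_{k_1}|$ (this is precisely the quantity $f_{k_1}(T)$ from equation~\eqref{eq: f(T) fraction is small}); restricting to sets containing a fixed $x\notin T$ gives the analogous identity with $T\cup\{x\}$; and the crucial observation $|F \cap A_{k_1,k_2}| \le |F \cap \probabilisticCenter_{k_2}| \le \mSet_{k_1,k_2}$ follows because $\probabilisticCenter_{k_2}\in\ff_{k_2}$ by Lemma~\ref{lemma: covering by centers}. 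The double count then cancels cleanly.

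The paper takes a genuinely different and considerably heavier route. Rather than exploit the uniform pointwise bound $|F\cap A_{k_1,k_2}|\le \mSet_{k_1,k_2}$, it controls the \emph{average} of $d_{k_1}(T\cup\{x\})$ over $x\in A_{k_1,k_2}$ via the decomposition $d_{k_1}(T) = \sum_P |\ff_{k_1}(T\cup P, T\cup A_{k_1,k_2})|/|\ff_{k_1}|$ together with $d^{(\ge \mSet_{k_1,k_2})}_{k_1\to k_2} = 1 - O(n^{-1})$, and then compares this average to a near-uniform upper bound on the individual $d_{k_1}(x)$ obtained from the symmetrization Lemma~\ref{lemma: symmetrization argument} and Claim~\ref{claim: 1 element minimum}. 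The loss to $O(\sqrt{n\log n})$ comes from the choice $\Delta = n^{-3/2}\sqrt{\log n}$ in that symmetrization step. Your approach bypasses the symmetrization machinery entirely by noticing that every bad set $F\in B^T$ can be ``blamed'' on at most $\mSet_{k_1,k_2}$ elements of $A_{k_1,k_2}$, which is a far more direct accounting. The paper's method has the virtue of rehearsing the symmetrization technique that becomes indispensable in Lemma~\ref{lemma: exact order}, but for this particular statement your argument is both shorter and quantitatively sharper.
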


The proof is given in the appendix, Section~\ref{subsection: proof of proposition inner degree with an element}.

The next corollary explains how $\ff_k$ behaves outside $Supp_k$. 

\begin{corollary}
\label{corollary: outside degree}
Let $\ff_{k_0}$ be a family from the extremal example. Then for each $S \in \bigvee_{k' \in Out_{k_0}} \binom{A_{k_0, k'}}{\le \mSet_{k_0, k'}}$ we have
\begin{align*}
    \sum_{x \not \in Supp_{k_0}} d_{k_0}(S \cup \{x\}) = O \left (  n^{-|S| - 1/2} \log^{1/2} n \right).
\end{align*}
\end{corollary}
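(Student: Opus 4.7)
The plan is to decompose the sum according to the partition $[n] \setminus Supp_{k_0} = \bigsqcup_{(k_1, k_2)} A_{k_1, k_2}$ given by Corollary~\ref{corollary: tentacles structure}, where $(k_1, k_2)$ ranges over edges of $T_\ell$ with $k_1, k_2 \neq k_0$. Since $\ell$ is fixed, there are only $O(1)$ such pairs, so it suffices to bound the contribution of each $A_{k_1, k_2}$ by $O(n^{-|S| - 1/2} \log^{1/2} n)$. The basic tool will be Proposition~\ref{proposition: multiplicative property of degrees} applied to $F_{k_0} = S \cup \{x\}$ together with carefully chosen $F_k \in \ff_k$ for $k \in \{k_1, k_2\} \cup Out_{k_0}$; the goal is to accumulate as many pairwise intersections as possible while keeping each $d_k(F_k)$ bounded below.

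In the ``easy'' situation where $k_1 \notin Out_{k_0}$, I take $F_k = S \cap A_{k_0, k}$ for $k \in Out_{k_0} \setminus \{k_2\}$, $F_{k_2} = (S \cap A_{k_0, k_2}) \cup \{x\}$ if $k_2 \in Out_{k_0}$ and $\{x\}$ otherwise, and $F_{k_1} = \{x\}$. Every $F_k$ for $k \neq k_1$ lies inside its own probabilistic center, so by Claim~\ref{claim: inner set degree} its degree is $\Omega(1)$; for $F_{k_1} = \{x\}$ we have $d_{k_1}(x) = \Omega(n^{-1})$ straight from the definition of $A_{k_1, k_2}$. A direct count shows the sum of pairwise intersections equals $|S| + 3$, so Proposition~\ref{proposition: multiplicative property of degrees} yields $d_{k_0}(S \cup \{x\}) = O(n^{-|S| - 2})$ uniformly in $x \in A_{k_1, k_2}$, and summing over the $O(n)$ such $x$ gives $O(n^{-|S| - 1})$, stronger than needed.

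The main obstacle is the case $k_1 \in Out_{k_0}$: since $x \in \probabilisticCenter_{k_2}$ rather than $\probabilisticCenter_{k_1}$, the natural upgrade $F_{k_1} = (S \cap A_{k_0, k_1}) \cup \{x\}$ mixes elements from two different centers, and its degree $d_{k_1}(F_{k_1})$ is not automatically bounded below and could a priori even vanish. I would invoke Proposition~\ref{proposition: inner degree with an element} with $T = S \cap A_{k_0, k_1} \subset \probabilisticCenter_{k_1}$ (note $|T| \le \mSet_{k_0, k_1} = O(1)$), producing an exceptional set $I_T^{k_1, k_2} \subset A_{k_1, k_2}$ of cardinality $O(\sqrt{n \log n})$ outside of which $d_{k_1}(T \cup \{x\}) \ge 2^{-|T| - 1} d_{k_1}(x) = \Omega(n^{-1})$. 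For each non-exceptional $x$ the upgrade is admissible, the intersection count remains $|S| + 3$, and the same calculation delivers $d_{k_0}(S \cup \{x\}) = O(n^{-|S| - 2})$, summing to $O(n^{-|S| - 1})$.

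For each exceptional $x \in I_T^{k_1, k_2}$ I would instead revert to $F_{k_1} = S \cap A_{k_0, k_1}$ (without $x$). This removes the contribution of $\{x\}$ from both $F_{k_0} \cap F_{k_1}$ and $F_{k_1} \cap F_{k_2}$, dropping the intersection count by two to $|S| + 1$, and simultaneously the $\Omega(n^{-1})$ factor coming from $d_{k_1}(F_{k_1})$ must be replaced by $d_{k_1}(S \cap A_{k_0, k_1}) = \Omega(1)$. Proposition~\ref{proposition: multiplicative property of degrees} therefore only yields the weaker estimate $d_{k_0}(S \cup \{x\}) = O(n^{-|S| - 1})$; however summing this over the $O(\sqrt{n \log n})$ exceptional elements produces $O(n^{-|S| - 1/2} \log^{1/2} n)$, which dominates the non-exceptional contribution and, after summation over the $O(1)$ pairs $(k_1, k_2)$, gives exactly the claimed bound.
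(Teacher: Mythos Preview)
Your proof is correct and follows essentially the same approach as the paper's: both apply Proposition~\ref{proposition: multiplicative property of degrees} with $F_{k_0}=S\cup\{x\}$ and $F_k=S\cap\probabilisticCenter_k$ (augmented by $x$ where appropriate), then invoke Proposition~\ref{proposition: inner degree with an element} to control the exceptional set where $d_{k_1}((S\cap\probabilisticCenter_{k_1})\cup\{x\})$ is too small. The only difference is cosmetic---you make the case $k_1\notin Out_{k_0}$ explicit as an ``easy'' case, whereas the paper treats it uniformly (noting implicitly that $S_{k_1}=\varnothing$ forces $I_{S_{k_1}}^{k_1,k_2}=\varnothing$).
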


\begin{proof}

Consider $\ff_{k_0}$ and $S$ from the statement and define $S_{k} = S \cap \probabilisticCenter_{k}$, $k \in [\ell]$. Consider $x$ outside $Supp_{k_0}$ and suppose that $x \in A_{k', k''} \subset \probabilisticCenter_{k''}$ for some $k',k''$. From Proposition~\ref{proposition: multiplicative property of degrees} we get
\begin{align*}
    d_{k_0}(S \cup \{x\}) d_{k'}(S_{k'} \cup \{x\}) d_{k''}(S_{k''} \cup \{x\}) \prod_{k \in [\ell] \setminus \{k_0, k', k''\}} d_{k}(S_{k}) = O(n^{-|S|-3}).
\end{align*}
The element $x$ contributes to the exponent $3$ and the partition of $S$ contributes $|S|$ . Using Claim~\ref{claim: inner set degree}, we infer $d_{k_0}(S \cup \{x\}) d_{k'}(S_{k'} \cup \{x\}) = O(n^{-|S|-3})$. Consider $I^{k', k''}_{S_{k'}}$ from Proposition~\ref{proposition: inner degree with an element}. If $x \not \in I^{k', k''}_{S_{k'}}$, then $d_{k'}(S_{k'} \cup \{x\}) = \Omega(d_{k'}(x)) = \Omega(n^{-1})$  by the definition of $I^{k', k''}_{S_{k'}}$, and so $d_{k_0}(S \cup \{x\}) = O(n^{-|S|-2})$.

If $x \in I_{S_{k'}}^{k', k''}$, then
\begin{align*}
    d_{k_0}(S \cup \{x\}) d_{k'}(S_{k'}) d_{k''}(S_{k''} \cup \{x\}) \prod_{k \in [\ell] \setminus \{k_0, k', k''\}} d_{k}(S_k) = O(n^{-|S|-1}),
\end{align*}
and, consequently, $d_{k_0}(S \cup \{x\}) = O(n^{-|S|-1})$. Define $I = \bigsqcup_{k' \in [\ell] \setminus \{k\}} \bigsqcup_{k'' \in Out_k'} I_{S_{k'}}^{k', k''}$. Note that $|I| = O(\sqrt{n \log n})$ by Proposition~\ref{proposition: inner degree with an element}. We get
\begin{align*}
    \sum_{x \in [n] \setminus Supp_k} d_{k_0}(S\cup \{x\}) \le \sum_{x \in [n] \setminus (Supp_k \cup I)} d_{k_0}(S \cup \{x\}) + \sum_{x \in I} d_{k_0}(S \cup \{x\}) = O \left (  n^{-|S| - 1/2} \log^{1/2} n \right).
\end{align*}
\end{proof}

For a reminder, we expect that for families from an extremal example it holds that
\begin{align*}
    \frac{|\ff_k(S)|}{|\ff_k|} \approx \frac{|\hh_k(S)|}{|\hh_k|}
\end{align*}
for any tentacle $S$. Corollary~\ref{corollary: outside degree} implies that for any $S$ we have
\begin{align*}
    \frac{|\ff_k(S)|}{|\ff_k|}
    \le 
    \frac{|\ff_k |_{Supp_k}(S)|}{|\ff_k|} + O \left ( n^{- |S| - 1/2} \sqrt{\log n}\right ).
\end{align*}
Since $|\hh_k| = |\ff_k| (1 - O(n^{-1}))$, we have
\begin{align*}
    \frac{|\ff_k(S)|}{|\ff_k|} \approx \frac{|\ff_k|_{Supp_k}(S)|}{|\hh_k|}.
\end{align*}
In the next section, we prove that $|\ff_k|_{Supp_k}(S)| \approx |\hh_k(S)|$ for any tecntacle $S$. The key ingredients of the proof will be induction on $|S|$ and the symmetrization.

\subsection{Asymptotics of normalized degrees}
\label{subsection: asymptotics of normalizaed degrees}

In this section, we aim to prove that
\begin{align*}
    \frac{|\ff_k(S)|}{|\ff_k|} \approx \frac{|\hh_k(S)|}{|\hh_k|}
\end{align*}
indeed holds for any tentacle $S$. Before we move on to the key lemma of this section, we need one auxiliary proposition. As it was mentioned above, we will use the symmetrization argument. Thus, we will need to check conditions of Lemma~\ref{lemma: symmetrization argument}. Let $S_0$ be from the statement of the lemma. Coniditions of the lemma require to bound normalized degrees of $S_0$. Unfortunately, bounds provided by Proposition~\ref{proposition: multiplicative property of degrees} turn out to be too rough in some cases. To overcome this issue, we impose the following proposition.

\begin{proposition}
\label{proposition: child  can not overlap}
Let $\ff_k$, $k \in [\ell]$ be the families from the extremal examples. Suppose that there are three families $k_0, k_1, k_2 \in [\ell]$ such that $(k_0, k_1), (k_1, k_2), (k_0, k_2) \in E(T_\ell)$. If $$\left |d_{k_0}(x_2) -\frac{\mSet_{k_0, k_2}}{|A_{k_0, k_2}|} \right | \le C_1 n^{-3/2} \log^{1/2} n ,$$ for some constant $C_1$ then $d_{k_1}(x_2) \le e^{-C_2 n }$ for some other constant $C_2$ that depends on $C_1$.
\end{proposition}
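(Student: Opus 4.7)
The argument mirrors the proof of Lemma~\ref{lemma: direction of tentacles}, specialised to a single distinguished element $x_2 \in A_{k_0, k_2}$ and driven by the sharp bound $d_{k_0}(x_2) = \mSet_{k_0, k_2}/|A_{k_0, k_2}| + O(n^{-3/2}\sqrt{\log n})$ provided by the hypothesis, so $d_{k_0}(x_2) = \Theta(n^{-1})$ by Corollary~\ref{corollary: tentacles structure}. The goal is to show that $\PP[\randomSubset \cup \{x_2\} \in \ff_{k_1}] \le e^{-\Omega(n)}$, where $\randomSubset$ is uniform on $2^{\probabilisticCenter_{k_1}}$; this implies the desired estimate, because by down-closedness every $F \in \ff_{k_1}(\saveSet{x_2})$ projects to a valid $(F \cap \probabilisticCenter_{k_1}) \cup \{x_2\} \in \ff_{k_1}$, each fibre contains at most $O(n^{\sigma_1})$ sets (with $\sigma_1 = \sum_{k' \ne k_1} \mSet_{k_1, k'}$), and $|\ff_{k_1}| \ge 2^{|\probabilisticCenter_{k_1}|}$ by Lemma~\ref{lemma: covering by centers}, together giving $\PP[\randomSubset \cup \{x_2\} \in \ff_{k_1}] \ge d_{k_1}(x_2)/n^{\sigma_1}$.

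Following Lemma~\ref{lemma: direction of tentacles}, define
\[
    \mathbf{g} = \sum_{P \in \binom{A_{k_0, k_1}}{\mSet_{k_0, k_1}}} d_{k_0}(\{x_2\} \cup P) \prod_{y \in P} \varepsilon_y, \qquad \varepsilon_y = \indicator[y \in \randomSubset].
\]
If $\randomSubset \cup \{x_2\} \in \ff_{k_1}$ and some $P \subset \randomSubset$ satisfies $d_{k_0}(\{x_2\} \cup P) > 0$, then any witness $F \in \ff_{k_0}$ containing $\{x_2\} \cup P$ satisfies $|F \cap (\randomSubset \cup \{x_2\})| \ge \mSet_{k_0, k_1} + 1$, contradicting the $\mSet$-overlapping property; hence the event forces $\mathbf{g} = 0$. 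A direct double count gives $\EE[\mathbf{g}] = 2^{-\mSet_{k_0, k_1}} N/|\ff_{k_0}|$, where $N = |\{F \in \ff_{k_0}(\saveSet{x_2}) : |F \cap A_{k_0, k_1}| = \mSet_{k_0, k_1}\}|$. The main obstacle is proving $N/|\ff_{k_0}| = \Omega(n^{-1})$. I split the complement $B = |\{F \in \ff_{k_0}(\saveSet{x_2}) : |F \cap A_{k_0, k_1}| < \mSet_{k_0, k_1}\}|$ by whether $F \subset Supp_{k_0}$: Corollary~\ref{corollary: outside degree} with $S = \{x_2\}$ bounds the ``outside'' contribution by $O(n^{-3/2}\sqrt{\log n})\,|\ff_{k_0}|$, while the inclusion $\ff_{k_0}|_{Supp_{k_0}} \subset \hh_{k_0}$ together with Claim~\ref{claim: hh_k approximation quality} reduces the ``inside'' contribution to an elementary count in $\hh_{k_0}$: the fraction of $F' \in \hh_{k_0}$ with $x_2 \in F'$ and $|F' \cap A_{k_0, k_1}| < \mSet_{k_0, k_1}$ factors as $\Theta(\mSet_{k_0, k_2}/|A_{k_0, k_2}|) \cdot O(\mSet_{k_0, k_1}/|A_{k_0, k_1}|) = O(n^{-2})$. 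Combining, $B/|\ff_{k_0}| = O(n^{-3/2}\sqrt{\log n})$, so $N/|\ff_{k_0}| \ge d_{k_0}(x_2) - O(n^{-3/2}\sqrt{\log n}) = \Omega(n^{-1})$ and $\EE[\mathbf{g}] = \Omega(n^{-1})$.

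Finally I apply Azuma's inequality to the exposure martingale of $\mathbf{g}$ in the variables $(\varepsilon_y)_{y \in A_{k_0, k_1}}$. Proposition~\ref{proposition: multiplicative property of degrees} applied to $K = \{k_0, k_1, k_2\}$ with $F_{k_0} = \{x_2\} \cup P$, $F_{k_1} = P$, $F_{k_2} = \{x_2\}$, combined with Claim~\ref{claim: inner set degree} giving $d_{k_1}(P) \ge 2^{-\mSet_{k_0, k_1}} - O(n^{-1})$ for $P \subset \probabilisticCenter_{k_1}$ and $d_{k_2}(x_2) \ge 1/2 - O(n^{-1})$, yields $d_{k_0}(\{x_2\} \cup P) = O(n^{-\mSet_{k_0, k_1} - 1})$ uniformly in $P$. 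Summing over $P \ni y$ gives Lipschitz constant $c_y = O(n^{-2})$, hence $\sum_y c_y^2 = O(n^{-3})$ by $|A_{k_0, k_1}| = O(n)$. Theorem~\ref{theorem: Hoeffding's iniequality} applied with $t = \EE[\mathbf{g}]/2$ then gives $\PP[\mathbf{g} \le \EE[\mathbf{g}]/2] \le \exp(-c\,\EE[\mathbf{g}]^2\,n^3) = e^{-\Omega(n)}$. Together with the first-paragraph reduction this completes the proof, with the constant $C_2$ depending on $C_1$ only through the implicit constant in the lower bound on $\EE[\mathbf{g}]$.
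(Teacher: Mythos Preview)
Your proof is correct and follows essentially the same approach as the paper's: the random subset $\randomSubset$ of $\probabilisticCenter_{k_1}$, the random variable $\mathbf{g}$ (the paper calls it $\mathbf{f}_{k_0}$ and sums over $\binom{\probabilisticCenter_{k_1}}{\mSet_{k_0,k_1}}$ rather than $\binom{A_{k_0,k_1}}{\mSet_{k_0,k_1}}$, but this is immaterial), the computation of $\EE[\mathbf{g}]=\Omega(n^{-1})$ via Corollary~\ref{corollary: outside degree} plus the explicit count in $\hh_{k_0}$, the Lipschitz bound $O(n^{-2})$ via Proposition~\ref{proposition: multiplicative property of degrees}, and the Azuma conclusion are all identical in substance to the paper's argument.
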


\begin{wrapfigure}[15]{l}{0.5\textwidth}
    \includegraphics[width=0.45\textwidth]{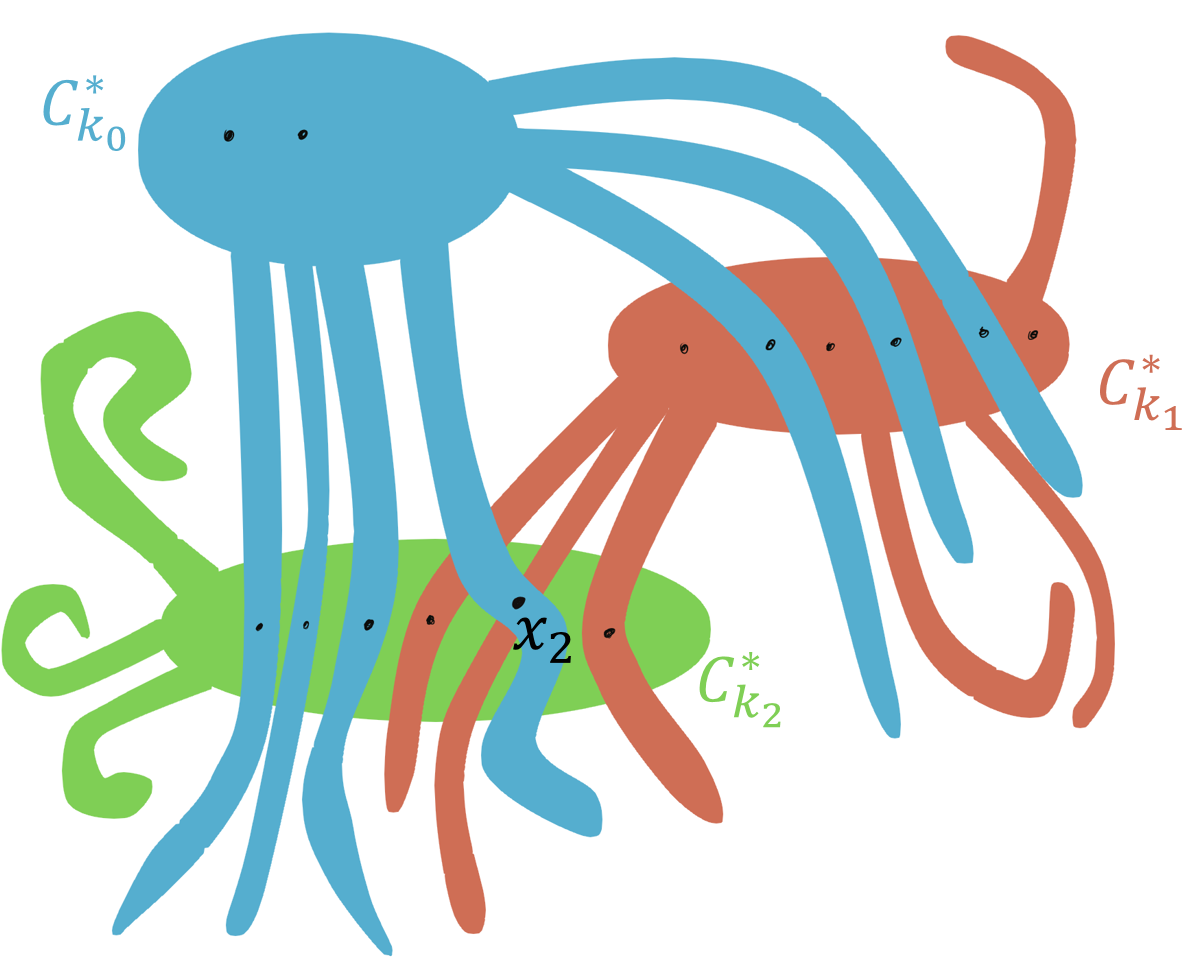}
    \caption{Illustration for Proposition~\ref{proposition: child  can not overlap}.}
\end{wrapfigure}

The proof is similar to that of Lemma~\ref{lemma: direction of tentacles} so we give it in the appendix, Section~\ref{subsection: proposition child  can not overlap}.

The remaining part of this section is dedicated to proof of the following fact: given $k \in [\ell]$ and  $S \in \bigvee_{k' \in Out_k} \binom{A_{k, k'}}{\le \mSet_{k, k'}}$, $d_k(S)$ has order $n^{-|S|}$. The proof is organized as multidimensional induction over vectors $\sVector_k = (|S \cap A_{k, k'}|)_{k' \in Out_k}$. We define a family $\sDomain_{\sVector, k} = \bigvee_{k' \in Out_k} \binom{A_{k, k'}}{\sVector_{k, k'}}$. We say $\sVector'_k \le \sVector_k$ if the coordinate-wise inequalities hold, and $\sVector'_k < \sVector_k$ if and only if $\sVector'_k \le \sVector_k$ and $\sVector'_k \neq \sVector_{k}$.

The induction first investigates all vectors $\sVector_k, k \in [\ell]$ with one non-zero entry. Then it deals with vectors $\sVector_k$ independently for different $k$. We formalize this procedure as follows:
\newline

\textbf{Induction hypothesis.} \textit{
    Let $\sVector_k$ be a vector defined above. 
    \begin{enumerate}
        \item If $\sVector_k$ has exactly one non-zero coordinate $\sVector_{k, k'}$, then for any $\hat k$ and $\tilde k \in Out_{\hat k}$, $s < \sVector_{k, k'}$ and $S \in \binom{A_{\hat k, \tilde k}}{s}$ it holds that
    \begin{align*}
        \left | d_{\hat k}(S) - \binom{\mSet_{\hat k, \tilde k}}{s} / \binom{|A_{\hat k, \tilde k}|}{s} \right | \le O \left ( \sqrt{\frac{\log n}{n}}\right ) n^{-s}.
    \end{align*}
        \item Otherwise, if $\sVector_k$ has more than one non-zero entry, then the previous statement holds for any $\hat k \in [\ell]$ and any possible $\sVector_{\hat k}$ with one non-zero position, and, in addition,
    \begin{align*}
        \left | d_k(S) - \prod_{k' \in Out_k} \binom{\mSet_{k, k'}}{\sVector_{k, k'}'} / \binom{|A_{k, k'}|}{\sVector_{k, k'}'} \right |
        \le O \left ( \sqrt{\frac{\log n}{n}}\right ) n^{-|S|}
    \end{align*}
    holds for any $\sVector_k' < \sVector_k$ and $S \in \sDomain_{\sVector', k}$.
    \end{enumerate} 
}

To illustrate how the proof works, consider $S \in \sDomain_{\sVector, k}$. The goal is to show $d_k(S) \approx \frac{|\hh_k(S)|}{|\hh_k|}$. Then, the steps of the proof are as follows:
\begin{itemize}
    \item using the induction hypothesis, show that 
    \begin{align*}
        \frac{1}{|\ff_k|} \sum_{\tilde{S} \in \sDomain_{\sVector, k}} \left | \ff_k(\tilde{S}) \cap \mathcal{G} \right | \ge \prod_{k' \in Out_{k}} \binom{\mSet_{k, k'}}{\sVector_{k, k'}} - O \left ( \sqrt{\frac{\log n}{n}}\right ), 
    \end{align*}
    where $\mathcal{G}$ is a family of some ``good'' sets;
    \item using the symmetrization argument, i.e. Lemma~\ref{lemma: symmetrization argument}, show that for $S_0 = \operatorname{argmin}_{\tilde{S} \in \sDomain_{\sVector, k}} d_k(\tilde{S})$, we have
    \begin{align*}
        d_k(S_0) \approx \frac{1}{|\sDomain_{\sVector, k}| |\ff_k|} \sum_{\tilde{S} \in \sDomain_{\sVector, k}} \left | \ff_k(\tilde{S}) \cap \mathcal{G} \right | \approx \frac{1}{|\sDomain_{\sVector, k}|} \sum_{\tilde{S} \in \sDomain_{\sVector, k}} d_k(\tilde{S});
    \end{align*}
    \item using the definition of $S_0$, establish the lower bound on $d_k(S)$: 
    \begin{align*}
        d_k(S) \ge \frac{1 - o(1)}{|\sDomain_{\sVector, k}|}
        \sum_{\tilde{S} \in \sDomain_{\sVector, k}} d_k(\tilde{S}) \ge \frac{1 - o(1)}{|\sDomain_{\sVector, k}|} \prod_{k' \in Out_{k}} \binom{\mSet_{k, k'}}{\sVector_{k, k'}};
    \end{align*}
    \item using Corollary~\ref{corollary: outside degree} and the fact that $\ff_k|_{Supp_k} \subset \hh_k$, obtain the upper bound on $d_k(S)$.
\end{itemize}

As we clarify the proof strategy, we are ready to state the result.

\begin{lemma}
\label{lemma: exact order}
Let $\ff_{k}$ be an arbitrary family from the extremal example and $S$ be a subset of $\bigcup_{k' \in Out_{k}} A_{k, k'}$ such that $\sVector_{k, k'} := |S \cap A_{k, k'}| \le \mSet_{k, k'} $. Then there is a constant $C$, depending on $\mSet$ only, such that
\begin{align*}
    \left |d_{k}(S) - \prod_{k' \in Out_{k}} \binom{\mSet_{k, k'}}{\sVector_{k, k'}} / \binom{|A_{k, k'}|}{\sVector_{k, k'}} \right |
    \le C \sqrt{\frac{\log n}{n}} n^{-|S|}.
\end{align*}
\end{lemma}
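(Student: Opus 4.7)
The plan is to prove the lemma by multidimensional induction on $\sVector_k = (|S \cap A_{k,k'}|)_{k' \in Out_k}$, following exactly the induction hypothesis stated above the lemma. I would first establish the base case of single-coordinate $\sVector_k$ (uniformly across all $k$), then lift to general $\sVector_k$ with a fixed $k$. The base case $|\sVector_k|=1$, i.e.\ $S=\{x\}$ with $x \in A_{k,k'}$, combines Claim~\ref{claim: 1 element minimum}, which gives an element $x_{\min}$ of $A_{k,k'}$ with $d_k(x_{\min}) \le \mSet_{k,k'}/|A_{k,k'}| + O(n^{-3/2}\log^{1/2}n)$, with the symmetrization Lemma~\ref{lemma: symmetrization argument} applied to $S_0 = \{x_{\min}\}$: the hypothesis of case (a) is checked using Corollary~\ref{corollary: tentacles structure} together with Proposition~\ref{proposition: child  can not overlap}, and the conclusion forces every other $x \in A_{k,k'}$ to satisfy $d_k(x) \le d_k(x_{\min}) + \Delta$ for $\Delta = C\sqrt{n^{-1}\log n}\cdot n^{-1}$.

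For the inductive step, fix $\sVector_k$ with $|\sVector_k| \ge 2$ and let $\sDomain_{\sVector, k} = \bigvee_{k' \in Out_k}\binom{A_{k,k'}}{\sVector_{k,k'}}$. I would follow the four-step outline in the excerpt. Step 1 (averaging): set $\mathcal{G} = \ff_k|_{Supp_k}$ and double-count
\[
\Sigma := \frac{1}{|\ff_k|}\sum_{\tilde S \in \sDomain_{\sVector, k}}|\ff_k(\tilde S) \cap \mathcal{G}| = \sum_{F \in \mathcal{G}} \frac{1}{|\ff_k|} \prod_{k' \in Out_k}\binom{|F \cap A_{k,k'}|}{\sVector_{k,k'}}.
\]
Using the induction hypothesis applied to the single-coordinate vectors $\sVector_k'$ (each of which bounds the first moment of $|F \cap A_{k,k'}|$), together with Proposition~\ref{proposition: subfamilies 1/n} and Claim~\ref{claim: hh_k approximation quality}, this gives $\Sigma \ge (1 - O(\sqrt{n^{-1}\log n}))\prod_{k' \in Out_k}\binom{\mSet_{k,k'}}{\sVector_{k,k'}}$.

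Step 2 (symmetrization): pick $S_0 = \operatorname{argmin}_{\tilde S \in \sDomain_{\sVector,k}} d_k(\tilde S)$ and verify the hypotheses of Lemma~\ref{lemma: symmetrization argument}. The degree bounds $d_{k'}(S_0)$ for $k' \ne k$ come from Proposition~\ref{proposition: multiplicative property of degrees} when $S_0$ straddles multiple centers (case (b)), and from Proposition~\ref{proposition: child  can not overlap} combined with the inductive estimate on $d_{k}(S_0)$ when $S_0 \subset \probabilisticCenter_{k_1}$ for a single $k_1$ (case (a)). Setting $\Delta$ of order $\sqrt{n^{-1}\log n} \cdot \prod\binom{\mSet_{k,k'}}{\sVector_{k,k'}}/\binom{|A_{k,k'}|}{\sVector_{k,k'}}$, the lemma forces $|\mathcal{I}_\Delta|$ to be at most $O(n^{-1}\log n)$; combining with the average from Step~1 yields the lower bound
\[
d_k(S) \ge d_k(S_0) \ge \frac{\Sigma}{|\sDomain_{\sVector,k}|} - \Delta \ge \left(1 - O\!\left(\sqrt{\tfrac{\log n}{n}}\right)\right) \prod_{k' \in Out_k}\binom{\mSet_{k,k'}}{\sVector_{k,k'}}/\binom{|A_{k,k'}|}{\sVector_{k,k'}}
\]
for every $S \in \sDomain_{\sVector, k}$.

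For the upper bound (Step 4), decompose $|\ff_k(S)| \le |\ff_k|_{Supp_k}(S)| + \sum_{x \notin Supp_k}|\ff_k(S \cup \{x\})|$: the first term is at most $|\hh_k(S)|$ since $\ff_k|_{Supp_k} \subset \hh_k$, and the second is controlled by Corollary~\ref{corollary: outside degree}, giving an $O(n^{-|S|-1/2}\log^{1/2}n)$ residue. Dividing by $|\ff_k|$ (using $|\hh_k|/|\ff_k| = 1 - O(n^{-1})$ from Claim~\ref{claim: hh_k approximation quality}) yields the matching upper bound. The main obstacle is Step~2: one must verify case (a) vs.\ case (b) of Lemma~\ref{lemma: symmetrization argument} uniformly in $\sVector_k$, ensuring that Proposition~\ref{proposition: child  can not overlap} is applicable (which requires the triangle $(k_0, k_1), (k_1, k_2), (k_0, k_2)$ in $T_\ell$), and that the $\Delta$ chosen keeps $(\Delta(1-\Delta)/2 - O(n^{-\mSet_{k_0,k_1}-2}))$ away from zero so that the bound on $|\mathcal{I}_\Delta|$ is nontrivial. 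The error term $\sqrt{\log n / n}$ must be inherited consistently through the induction without accumulating across the $|\sVector_k|$ steps.
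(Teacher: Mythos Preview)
Your overall architecture---multidimensional induction on $\sVector_k$, symmetrization for the lower bound, and Corollary~\ref{corollary: outside degree} together with $\ff_k|_{Supp_k}\subset\hh_k$ for the upper bound---matches the paper. The gap is in how your Steps~1 and~2 interface. The conclusion of Lemma~\ref{lemma: symmetrization argument} controls
\[
\mathcal{I}_\Delta=\Big\{\tilde S\in\sDomain_{\sVector,k}:\ \tfrac1{|\ff_k|}\big|\ff_k(\excludeSet{\tilde S})\cap\!\!\bigcap_{S'\subsetneq S_0}\!\!\ff_k(\excludeSet{S'})\big|\ge d_k(S_0)+\Delta\Big\},
\]
which is defined through the \emph{intersection quantity}, not through $d_k(\tilde S)$. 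Since that quantity is always $\le d_k(\tilde S)$, knowing $|\mathcal I_\Delta|$ is small tells you nothing about how many $\tilde S$ have $d_k(\tilde S)\ge d_k(S_0)+\Delta$, so your plain average $\Sigma$ with $\mathcal G=\ff_k|_{Supp_k}$ cannot be combined with the symmetrization output to force $d_k(S_0)$ large. (For $|S_0|=1$ the intersection is vacuous and the issue vanishes, but your base-case description, relying only on Claim~\ref{claim: 1 element minimum}, still gives only an upper bound on $d_k(x)$ and omits the averaging needed for the lower one.)

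The paper instead lower-bounds the intersection sum itself. It reduces one coordinate of $\sVector_k$ by $1$ to $\sVector_k'$, writes
\[
\sum_{\tilde S\in\sDomain_{\sVector,k}}\big|\ff_k(\excludeSet{\tilde S})\cap\cdots\big|
=\frac{1}{\sVector_{k,k'}}\sum_{\tilde S\in\sDomain_{\sVector',k}}\sum_{x\in A_{k,k'}\setminus\tilde S}\big|\ff_k(\excludeSet{\tilde S}\cup\excludeSet{\{x\}})\cap\cdots\big|,
\]
uses down-closedness and $d_{k\to k'}^{(\ge\mSet_{k,k'})}=1-O(n^{-1})$ to extract the factor $(\mSet_{k,k'}-\sVector_{k,k'}')/\sVector_{k,k'}$, and then invokes the induction hypothesis on $\sDomain_{\sVector',k}$: the IH yields $|\hh_k(\tilde S)\setminus\ff_k|_{Supp_k}(\tilde S)|/|\hh_k|=O(n^{-|\tilde S|-1/2}\log^{1/2}n)$, and this is exactly what permits replacing $\ff_k$ by $\hh_k$ inside the intersection with $\bigcap_{S'\subsetneq S_0}\ff_k(\excludeSet{S'})$ and computing it explicitly. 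That one-coordinate reduction, and the use of the IH to pass to $\hh_k$ inside the intersection, is the missing piece. A smaller point: you have the roles of cases~(a) and~(b) swapped when checking hypotheses---Proposition~\ref{proposition: child  can not overlap} is used in case~(b) for $\tilde k\in Out_k$ (relying on the single-element base case already established), while case~(a) uses Proposition~\ref{proposition: multiplicative property of degrees} with the IH on $d_k(S_0\setminus\{x\})$.
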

\begin{proof}
First, we obtain the lower bound on $d_{k}(S)$. Define $S_0 = \arg \min_{S \in \sDomain_{\sVector, k}} d_{k}(S)$. Choose $k'$ such that $\sVector_{k, k'} > 0$. Define $\sVector'_{k} < \sVector_{k}$ as follows
\begin{align*}
    \sVector'_{k, k''} =
    \begin{cases}
        \sVector_{k, k''}, & \text{if } k'' \neq k', \\
        \sVector_{k, k''} - 1, & \text{if } k'' = k'.
    \end{cases}
\end{align*}
Then, by double counting
\begin{align}
\label{eq: exact order lemma sum to reduce}
    \frac{1}{|\ff_k|} \sum_{\tilde{S} \in \sDomain_{\sVector, k}}
        \left |
            \ff_k(\excludeSet{\tilde{S}}) \cap \bigcap_{S' \subsetneq S_0} \ff_k(\excludeSet{S'})
        \right |
    =
    \frac{\sVector_{k, k'}^{-1}}{|\ff_k|}
    \sum_{\tilde{S} \in \sDomain_{\sVector', k}}
    \sum_{x \in A_{k, k'} \setminus \tilde{S}}
        \left |
            \ff_k(\excludeSet{\tilde{S}} \cup \excludeSet{\{x\}})
            \cap
            \bigcap_{S' \subsetneq S_0} \ff_k(\excludeSet{S'})
        \right |.
\end{align}
Due to down-closeness, we have
\begin{align*}
    \left |
        \ff_k(\excludeSet{\tilde{S}} \cup \excludeSet{\{x\}})
        \cap
        \bigcap_{S' \subsetneq S_0} \ff_k(\excludeSet{S'})
    \right |
    \ge
    \left |
        \ff_k(\excludeSet{\tilde{S}} \cup \saveSet{\{x\}})
        \cap
        \bigcap_{S' \subsetneq S_0} \ff_k(\excludeSet{S'})
    \right |.
\end{align*}
Since $d_{k \to k'}^{(\ge \mSet_{k, k'})} = 1 - O(n^{-1})$ from Lemma~\ref{lemma: direction of tentacles}, we may consider only sets that intersect $\probabilisticCenter_{k'}$ by exactly $\mSet_{k, k'}$ elements to bound sum~\eqref{eq: exact order lemma sum to reduce} from below. A set $T \subset A_{k, k'}$ of size $\mSet_{k, k'}$ such that $\tilde{S} \subset T$ is counted $\mSet_{k, k'} - \sVector_{k, k'}'$ times in the sum
\begin{align*}
     \sum_{x \in A_{k, k'} \setminus \tilde{S}}
        & \left |
            \ff_k(\excludeSet{\tilde{S}} \cup \saveSet{\{x\}})
            \cap
            \bigcap_{S' \subsetneq S_0} \ff_k(\excludeSet{S'})
        \right |.
\end{align*}
Thus,
\begin{align}
    \frac{1}{|\ff_k|} \sum_{\tilde{S} \in \sDomain_{\sVector', k}} 
    \sum_{x \in A_{k, k'} \setminus \tilde{S}}
        & \left |
            \ff_k(\excludeSet{\tilde{S}} \cup \saveSet{\{x\}})
            \cap
            \bigcap_{S' \subsetneq S_0} \ff_k(\excludeSet{S'})
        \right |
    \ge \nonumber \\
    & \ge \frac{\mSet_{k, k'} - \sVector_{k, k'}'}{|\ff_k|}
    \sum_{\tilde{S} \in \sDomain_{\sVector', k}}
        \left |
            \ff_k(\excludeSet{\tilde{S}})
            \cap
            \bigcap_{S' \subsetneq S_0} \ff_k(\excludeSet{S'})
        \right | - O(n^{-1}). \label{eq: reduction sum whole exact order lemma}
\end{align}
If $\sVector'_k$ is the zero vector, we already proved the following: $\sum_{x \in A_{k, k'}} d_{k}(x) \ge \mSet_{k, k'} - O(n^{-1})$.

Otherwise, we use the induction hypothesis. We may apply Corollary~\ref{corollary: outside degree} and show that for any $\tilde{S} \in \sDomain_{\sVector', k}$:
\begin{align*}
    & \prod_{k' \in Out_{k}} \binom{\mSet_{k, k'}}{\sVector_{k, k'}'} / \binom{|A_{k, k'}|}{\sVector_{k, k'}'} - O \left (\sqrt{\frac{\log n}{n^{1 + 2 |\tilde{S}|}}} \right )
    \overset{\text{I.H.}}{\le} 
    d_k(\tilde{S}) 
    \overset{\text{Cor.~\ref{corollary: outside degree}}}{\le} 
    \frac{|\ff_k(\tilde{S})|_{Supp_k}|}{|\ff_k|} + O \left (\sqrt{\frac{\log n}{n^{1 + 2 |\tilde{S}|}}} \right ) \\
    & \overset{\text{Claim~\ref{claim: hh_k approximation quality}}}{\le} (1 + O(n^{-1})) \frac{|\ff_k(\tilde{S})|_{Supp_k}|}{|\hh_k|} + O \left (\sqrt{\frac{\log n}{n^{1 + 2 |\tilde{S}|}}} \right )
    \le (1 + O(n^{-1})) \frac{|\hh_k(\tilde{S})|}{|\hh_k|} + O \left (\sqrt{\frac{\log n}{n^{1 + 2 |\tilde{S}|}}} \right )\\
    & \overset{\substack{\text{by the definition~\eqref{eq: definition of hh_k}} \\ \text{of } \hh_k}}{\le} \prod_{k' \in Out_{k}} \binom{\mSet_{k, k'}}{\sVector_{k, k'}'} / \binom{|A_{k, k'}|}{\sVector_{k, k'}'} + O \left (\sqrt{\frac{\log n}{n^{1 + 2 |\tilde{S}|}}} \right ).
\end{align*}
Given that $\ff_k|_{Supp_k} \subset \hh_k$ the claim of the inequality above also implies $|\hh_k(\tilde{S}) \setminus \ff_k(\tilde{S})|/|\hh_k| = O \left (n^{-|\tilde{S}| - 1/2} \log^{1/2} n \right )$. Hence,
\begin{align*}
    & \frac{1}{|\ff_k|}
    \sum_{\tilde{S} \in \sDomain_{\sVector', k}}
        \left |
            \ff_k(\excludeSet{\tilde{S}})
            \cap
            \bigcap_{S' \subsetneq S_0} \ff_k(\excludeSet{S'})
        \right |
    \ge
    \frac{1}{|\ff_k|}
    \sum_{\tilde{S} \in \sDomain_{\sVector', k}}
        \left |
            \ff_k(\excludeSet{\tilde{S}})|_{Supp_k}
            \cap
            \bigcap_{S' \subsetneq S_0} \ff_k(\excludeSet{S'})|_{Supp_k}
        \right | \\
    & \ge
    \frac{1}{|\hh_k|}
    \sum_{\tilde{S} \in \sDomain_{\sVector', k}}
        \left [\left |
            \hh_k(\excludeSet{\tilde{S}})
            \cap
            \bigcap_{S' \subsetneq S_0} \hh_k(\excludeSet{S'})
        \right |
    - O \left (n^{-|\tilde{S}| - 1/2} \log^{1/2} n \right )\right ] \\
    & \ge \prod_{k' \in Out_{k}} \binom{\mSet_{k, k'}}{\sVector_{k, k'}'}  - O \left (\sqrt{\frac{\log n}{n}} \right ). 
\end{align*}
Combining the above and inequalities~\eqref{eq: exact order lemma sum to reduce}, \eqref{eq: reduction sum whole exact order lemma}, we obtain
\begin{align}
    \frac{1}{|\ff_k|} \sum_{\tilde{S} \in \sDomain_{\sVector, k}}
        \left |
            \ff_k(\excludeSet{\tilde{S}}) \cap \bigcap_{S' \subsetneq S_0} \ff_k(\excludeSet{S'})
        \right |
    & \ge
    \frac{\mSet_{k, k'} - \sVector_{k, k'}'}{\sVector_{k, k'}} \prod_{k' \in Out_{k}} \binom{\mSet_{k, k'}}{\sVector_{k, k'}'} - O \left (\sqrt{\frac{\log n}{n}} \right ) \nonumber \\
    & \ge
    \prod_{k' \in Out_k} \binom{\mSet_{k, k'}}{\sVector_{k, k'}} - O \left (\sqrt{\frac{\log n}{n}} \right ). \label{eq: exact order lower bound}
\end{align}
Now apply Lemma~\ref{lemma: symmetrization argument} for $\sDomain_{\sVector, k}$, $S_0 = \arg \min_{S \in \sDomain_{\sVector, k}} d_k(S)$ and $\Delta = O \left (\sqrt{\frac{\log n}{n}} \right ) n^{-|S_0|}$. Check the assumptions of the lemma. 
\begin{itemize}
    \item If $\sVector_k$ has one non-zero entry $\sVector_{k, k'}$, then case~\ref{symmetrization lemma: case 1} holds. Let $\tilde{k} \in [\ell] \setminus \{k, k'\}$ be an index such that we want to show $d_{\tilde{k}}(S_0) \ll \Delta$. If $|S_0| = 1$ then $d_{\tilde{k}}(S_0) = O(n^{-2})$ according to Propositions~\ref{proposition: multiplicative property of degrees} and~\ref{corollary: tentacles structure}. Next, assume $|S_0| \ge 2$ and choose $x \in S_0$. Due to the induction hypothesis, $d_k(S_0 \setminus \{x\}) = \Omega(n^{-|S_0| + 1})$. Meanwhile,
    \begin{align*}
        d_k(S_0 \setminus \{x\}) d_{\tilde{k}}(S_0) d_{k'}(S_0) & = O(n^{-3|S_0| + 2}), \\
        d_{\tilde{k}}(S_0) = O(n^{-2 |S_0| + 1}) & = O(n^{-|S_0| - 1}) \ll \Delta.
    \end{align*}
    \item If $\sVector_k$ has more than one non-zero entry, then we are in case~\ref{symmetrization lemma: case 2} and $|S_0| \ge 2$. Let $\tilde{k} \in [\ell] \setminus \{k\}$ be an arbitrary index. If $\tilde{k} \in Out_k$, $d_{\tilde{k}}(S_0) = \exp(-\Omega(n))$ because of either Proposition~\ref{proposition: child  can not overlap} or Lemma~\ref{lemma: direction of tentacles}. Otherwise, choose some $x \in S_0$ and write down
    \begin{align*}
        d_{\tilde{k}}(S_0) d_k(S_0 \setminus \{x\}) \prod_{k' \in Out_k} d_{k'}(S_0 \cap \probabilisticCenter_{k'}) & = O(n^{- 3|S_0| + 2}),\\
        d_{\tilde{k}}(S_0) = O(n^{- 2 |S_0| + 1})& = O(n^{-|S_0| - 1}) \ll \Delta.
    \end{align*}
    Here we use the induction hypothesis and Proposition~\ref{proposition: multiplicative property of degrees}.
\end{itemize}
Thus, the assumptions of Lemma~\ref{lemma: symmetrization argument} indeed hold. For a reminder, the conclusion is that
\begin{align*}
    \mathcal{I}_\Delta = \left \{ \tilde{S} \in \sDomain_{\sVector, k} \mid \frac{1}{|\ff_{k}|} \left | \ff_{k}(\excludeSet{\tilde{S}}) \cap \bigcap_{S' \subsetneq S_0} \ff_{k}(\excludeSet{S'}) \right | \ge d_k(S_0) + \Delta \right \}
\end{align*}
has cardinality $O \left ( \frac{1}{\Delta} \frac{\log n}{n} \right ) = O \left ( n^{|S_0|} \sqrt{\frac{\log n}{n}}\right )$. Decomposing the sum in~\eqref{eq: exact order lower bound}, we obtain
\begin{align}
\label{eq: final sum exact order}
    \frac{1}{|\ff_k|} \sum_{\tilde{S} \in \mathcal I_\Delta}
    \left |
        \ff_k(\tilde{S}) \cap \bigcap_{S' \subsetneq S_0} \ff_k(S')
    \right |
    + (|\sDomain_{\sVector, k}| - |\mathcal I_\Delta|) (d_k(S_0) + \Delta) \ge \prod_{k' \in Out_k} \binom{\mSet_{k, k'}}{\sVector_{k, k'}} - O \left (\sqrt{\frac{\log n}{n}} \right ). 
\end{align}
From Proposition~\ref{proposition: multiplicative property of degrees}, we have
\begin{align*}
    d_{k}(\tilde{S}) \prod_{k' \in Out_k} d_{k'}(\tilde{S} \cap \probabilisticCenter_{k'}) & = O(n^{-|\tilde{S}|}).
\end{align*}
Claim~\ref{claim: inner set degree} guarantees that $d_{k'}(\tilde{S} \cap \probabilisticCenter_{k'}) = \Omega(1)$ for $k ' \in Out_{k}$. Thus  $d_{k}(\tilde{S}) =  O(n^{-|\tilde{S}|})$ and
\begin{align*}
     \frac{1}{|\ff_k|} \sum_{\tilde{S} \in \mathcal I_\Delta}
    \left |
        \ff_k(\tilde{S}) \cap \bigcap_{S' \subsetneq S_0} \ff_k(S')
    \right | 
    \le |\mathcal{I}_\Delta| O(n^{-|S_0|}) = O \left (\sqrt{\frac{\log n}{n}} \right ).
\end{align*}
Consequently, the left-hand side of~\eqref{eq: final sum exact order} is at most
\begin{align*}
    O \left (\sqrt{\frac{\log n}{n}} \right ) + \left  (|\sDomain_{\sVector, k}| - O(n^{|S_0| - 1/2} \log^{1/2} n) \right ) (d_k(S_0) + O(n^{-|S_0| - 1/2} \log^{1/2} n))  \\
    = d_{k}(S_0) |\sDomain_{\sVector, k}| + O \left (\sqrt{\frac{\log n}{n}} \right ) = \prod_{k' \in Out_k} \binom{|A_{k, k'}|}{\sVector_{k, k'}} d_{k}(S_0) + O \left (\sqrt{\frac{\log n}{n}} \right ).
\end{align*}
Whence, due to~\eqref{eq: final sum exact order}, we have
\begin{align*}
    \min_{S \in \sDomain_{\sVector, k}} d_k(S) = d_{k}(S_0) \ge \prod_{k' \in Out_k} \binom{\mSet_{k, k'}}{\sVector_{k, k'}} /\binom{|A_{k, k'}|}{\sVector_{k, k'}} - O \left (n^{-|S_0|-1/2} \log^{1/2} n\right ) .
\end{align*}
The upper bound is obtained using Corollary~\ref{corollary: outside degree}. Let $S \in \sDomain_{\sVector, k}$. Then
\begin{align*}
    d_k(S) & \le \frac{\left | \ff_k(S)|_{Supp_k} \right |}{|\ff_k|} + O \left ( \sqrt{\frac{\log n}{n}}\right ) n^{-|S|} \\ 
    & \le \frac{|\hh_k(S)|}{|\hh_k|} + O \left ( \sqrt{\frac{\log n}{n}}\right ) n^{-|S|} \le \prod_{k' \in Out_k} \binom{\mSet_{k, k'}}{\sVector_{k, k'}} /\binom{|A_{k, k'}|}{\sVector_{k, k'}} + O \left (\sqrt{\frac{\log n}{n}} \right )n^{-|S|}.
\end{align*}
\end{proof}

\subsection{Proof}

\begin{proof}[Proof of the main theorem]
First, we obtain the asymptotics of $\targetFunction$. Form Proposition~\ref{proposition: subfamilies 1/n}, we have 
\begin{align*}
    |\ff_k |_{Supp_k}| \ge (1 - O(n^{-1})) |\ff_k|.
\end{align*}
Moreover, each $x$ lies in at most 2 families $\ff_k |_{Supp_k}, k \in [\ell]$. Thus, Lemma~\ref{lemma: extremal exmaple asymptotics} implies that
\begin{align*}
    \targetFunction = \left (1 + O \left ( n^{-1} \right ) \right ) \cdot
    2^n\cdot
    \prod_{S\in{[\ell]\choose 2}}
        \bigg(
            \frac 1{\mSet_S!}
            \Big(
                \frac{\mSet_S \cdot n}{
                    \sum_{S'\in {[\ell]\choose 2}}
                        \mSet_{S'}
                }
            \Big)^{\mSet_S}
        \bigg).
\end{align*}
To match this asymptotics, due to the definition~\ref{eq: definition of hh_k} of $\hh_k$, we must have
\begin{align*}
    |A_S| = (1 + O(n^{-1})) \cdot \frac{\mSet_S \cdot n}{\sum_{S' \in \binom{[\ell]}{2}}\mSet_{S'}}.
\end{align*}

Second, we obtain the structural part of Theorem~\ref{theorem: maximal families structure}. Define families
\begin{align*}
    \mathcal{P}_k = \bigvee_{k' \in Out_k} \binom{A_{k, k'}}{\le \mSet_{k,k'}}.
\end{align*}
Since we have already proved that $d_{k}(F) > 0$ for every $F \in \mathcal{P}_k$ for large $n$, $\mathcal{P}_k \subset \ff_k$, see Lemma~\ref{lemma: exact order}. The remaining part is to prove that $2^{\probabilisticCenter_k} \vee \mathcal{P}_k \subset \ff_k$. If $k$ is a source then due to Proposition~\ref{proposition: empty center of the source} its center is empty and the statement is already proved. Fix a $k_0$ which is not a source and define
\begin{align}
    \mathcal{E}_k & =  \left \{ e \in \ff_{k} \cap \binom{n}{\mSet_{k, k_0} + 1} \mid e \setminus \probabilisticCenter_{k_0} \in \mathcal{P}_{k_0} \right \}, \label{eq: final proof errors definition} \\
    \mathcal{E} & = \bigcup_{k \in [\ell] \setminus \{k_0\}} \mathcal{E}_k. \nonumber
\end{align}
The condition $e \setminus \probabilisticCenter_{k_0} \in \mathcal{P}_{k_0}$ means that $e \in \hh_{k_0}$. Since $\ff_{k_0}$ is an extremal family, Claim~\ref{claim: hypergraph point of view} implies that $F \in \hh_{k_0} \setminus \ff_{k_0}$ if and only if there exists some $e \in \mathcal{E}$ such that $e \subset F$. Thus, the condition $\mathcal{E} = \varnothing$ is sufficient for $\hh_{k_0} \subset \ff_{k_0}$. Note that for any $k \in [\ell] \setminus \{k_0\}$ we have
\begin{align*}
    \frac{1}{|\ff_k|} \left | \bigcup_{e \in \mathcal{E}_k} \ff_k(\saveSet{e}) \right |
    \le
    \sum_{e \in \mathcal{E}_k} d_k(e)
    \le C \sum_{e \in \mathcal{E}_k} n^{-|e|}
\end{align*}
for some constant $C$. The proof of the last inequality is straightforward and based on considering two cases: $k \in Out_{k_0}$ and $k \not \in Out_{k_0}$. Since $\mathcal{P}_{k_0} \subset \ff_{k_0}$, a set $e \in \mathcal{E}$ can not lie completely in $\mathcal P_{k_0}$, and so $e \cap \probabilisticCenter_{k_0} \neq \varnothing$. Thus, in the former case, we have that $d_k(e)$ is exponentially small due to Lemma~\ref{lemma: direction of tentacles}. The latter case follows from Proposition~\ref{proposition: multiplicative property of degrees}:
\begin{align*}
    d_k(e) d_{k_0}(e \cap \probabilisticCenter_{k_0}) \prod_{k' \in Out_{k_0}} d_{k'}(e \cap \probabilisticCenter_{k'}) \le C' n^{-|e|}
\end{align*}
for some constant $C'$. Since $d_{k_0}(e \cap \probabilisticCenter_{k_0})$ and $d_{k'}(e \cap \probabilisticCenter_{k'})$ are asymptotically constant due to Claim~\ref{claim: inner set degree}, we have $d_{k}(e) = O(n^{-|e|})$.

Define families
\begin{align}
\label{eq: final proof families modification}
    \ff_k' = \begin{cases}
        \ff_k \cup \hh_k, & k = k_0, \\
        \ff_k \setminus \bigcup_{e \in \mathcal{E}_k} \ff_k(\saveSet{e}), & \text{otherwise}.
    \end{cases}
\end{align}
Suppose that $\mathcal{E} \neq \varnothing$. In the proof, we show that modification~\eqref{eq: final proof families modification} can not enlarge the target product and so $\hh_{k_0} \subset \ff_{k_0}$. We have
\begin{align}
\label{eq: final proof modified families product}
    \prod_{k \in [\ell]} |\ff_k'| \ge (1 + \beta) \left (1 - O \left ( \sum_{e \in \mathcal{E}} n^{-|e|} \right ) \right )\prod_k |\ff_k|, 
\end{align}
where $\beta:= \frac{|\hh_{k_0} \setminus \ff_{k_0}|}{|\ff_{k_0}|}$. Thus, it is sufficient to show that
\begin{align}
\label{eq: beta is asymptotically large than degrees}
    \beta \gg C \sum_{e \in \mathcal{E}} n^{-|e|} \ge \sum_{k \in [\ell] \setminus \{k_0\}} \sum_{e \in \mathcal{E}_k} d_{k}(e).
\end{align}
Define $\mathcal{P}_{k_0}' = \bigvee_{k \in Out_{k_0}} \binom{A_{k, k_0}}{\mSet_{k, k_0}}$ the set of inclusion-maximal ``tentacles''. Then
\begin{align}
    \beta & = \frac{1}{|\ff_{k_0}|} \sum_{P \in \mathcal{P}_{k_0}} \left (
        2^{|\probabilisticCenter_{k_0}|} - \left | \ff_{k_0}|_{Supp_{k_0}} \left (P, \bigcup_{k \in Out_{k_0}} A_{k_0, k}\right ) \right |
    \right ) \nonumber \\
    & \overset{\text{Claim~\ref{claim: hh_k approximation quality}}}{\ge} \frac{1 - O(n^{-1})}{|\hh_{k_0}|} \sum_{P \in \mathcal{P}_{k_0}} \left (
        2^{|\probabilisticCenter_{k_0}|} - \left | \ff_{k_0}|_{Supp_{k_0}} \left (P, \bigcup_{k \in Out_{k_0}} A_{k_0, k}\right ) \right |
    \right ) \nonumber \\
    & \overset{\substack{\text{definition~\eqref{eq: definition of hh_k}} \\ \text{of } \hh_k }}{\ge} \Theta \left (n^{-\sum_{k \in Out_{k_0}} \mSet_{k, k_0}} \right ) \sum_{P \in \mathcal{P}_{k_0}'}
    \left ( 1 - \frac{|\ff_{k_0}|_{Supp_{k_0}} \left (P \right )|}{2^{|\probabilisticCenter_{k_0}|}}\right ). \label{eq: final proof beta lower bound 1}
\end{align}
Consider $m = \max_{k, k'} \mSet_{k, k'}$ and define
\begin{align*}
    \mathcal{E}' = \left \{
        F \in \binom{\probabilisticCenter_{k_0}}{m} \vee \mathcal{P}'_{k_0}
        \mid
        \exists e \in \mathcal{E} \text{ s.t. } e \subset F 
    \right \}
\end{align*}
If for some $P \in \mathcal{P}_{k_0}'$ we have $\mathcal{E}'(P) \neq \varnothing$, then there exists $T \in \binom{\probabilisticCenter_{k_0}}{m}$ such that $P \cup T \in \mathcal{E}'$. Thus, any $F \in \ff_{k_0}|_{Supp_{k_0}} (P)$ can not contain $T$. Consequently,
\begin{align*}
    \left |\ff_{k_0} |_{Supp_{k_0}}(P) \right | \le 2^{|\probabilisticCenter_{k_0}|} - 2^{|\probabilisticCenter_{k_0}| - m}, \\
\end{align*}
Rearranging terms, we obtain
\begin{align*}
    1 - \frac{|\ff_{k_0}|_{Supp_{k_0}} \left (P \right )|}{2^{|\probabilisticCenter_{k_0}|}} \ge 2^{-m}.
\end{align*}
Thus,
\begin{align*}
    1 - \frac{|\ff_{k_0}|_{Supp_{k_0}} \left (P \right )|}{2^{|\probabilisticCenter_{k_0}|}} \ge 2^{-m} \indicator \left \{ \mathcal{E}'(P) \neq \varnothing \right \}.
\end{align*}
Combining the above with~\eqref{eq: final proof beta lower bound 1}, we get
\begin{align}
\label{eq: beta lower bound from P' final proof}
    \beta & = \Omega \left ( n^{- \sum_{k \in Out_{k_0}} \mSet_{k, k_0}} \right ) \sum_{P \in \mathcal P'_{k_0}} \indicator \left \{ \mathcal{E}'(P) \neq \varnothing \right \} \nonumber \\
    & \ge c \sum_{P \in \mathcal P'_{k_0}} \indicator \left \{ \mathcal{E}'(P) \neq \varnothing \right \} n^{- |P|}
\end{align}
for some constant $c$.

Family $\mathcal{E}$ is of uniformity at most $m + \sum_{k \in Out_{k_0}} \mSet_{k, k_0}$. Thus, we have
\begin{align}
\label{eq: E decomposition into layers}
    \sum_{e \in \mathcal{E}} n^{-|e|} = \sum_{t = 1}^{m + \sum_{k \in Out_{k_0}} \mSet_{k, k_0}} \sum_{e \in \mathcal E^{(t)}} n^{-|e|} = \sum_{t = 1}^{m + \sum_{k \in Out_{k_0}} \mSet_{k, k_0}} n^{-t} |\mathcal{E}^{(t)}|,
\end{align}
where $\mathcal{E}^{(t)} = \mathcal E \cap \binom{[n]}{t}$. For each set $F \in \mathcal E^{(t)}$ there are $\Omega \left (n^{m + \sum_{k \in Out_{k_0}} \mSet_{k, k_0} - t} \right )$ sets $F'$ in $\mathcal E'$ such that $F \subset F'$, and, conversely, for each $F' \in \mathcal{E}'$ there exist at most $\binom{m + \sum_{k \in Out_{k_0}} \mSet_{k, k_0}}{t}$ sets $F \in \mathcal{E}^{(t)}$ such that $F \subset F'$. Thus, we have
\begin{align*}
    \frac{|\mathcal{E}^{(t)}|}{\binom{m + \sum_{k \in Out_{k_0}} \mSet_{k, k_0}}{t}} & \le C |\mathcal{E}'| n^{- \left \{m + \sum_{k \in Out_{k_0}} \mSet_{k,k_0} - t \right \}}, \\
    n^{-t} |\mathcal{E}^{(t)}| & \le C \sum_{e \in \mathcal{E}'} n^{-|e|}
\end{align*}
for some constant $C$ that depends on $\mSet, t$. Combining the above with~\eqref{eq: E decomposition into layers}, we obtain
\begin{align*}
    \sum_{e \in \mathcal{E}} n^{-|e|} = O(1) \cdot \sum_{e \in \mathcal{E}'} n^{-|e|}.
\end{align*}
Hence, to satisfy~\eqref{eq: beta is asymptotically large than degrees}, it is enough to show that~\eqref{eq: beta lower bound from P' final proof} is asymptotically larger than $\sum_{e \in \mathcal E'} n^{-|e|}$:
\begin{align}
\label{eq: indicator inequality final proof}
    \sum_{P \in \mathcal P'} \indicator \{\mathcal{E}'(P) \neq \varnothing \} n^{-|P|} \gg \sum_{e \in \mathcal{E}} n^{-|e|} = \sum_{P \in \mathcal P'} \left \{ n^{-m} |\mathcal E'(P)| \right \} n^{-|P|}.
\end{align}
Define $[\mathcal{E}'(P)]^{\uparrow} = \{F \in 2^{\probabilisticCenter_{k_0}} \mid \exists e \in \mathcal{E}'(P) \text{ s.t. } e \subset F\}$ the upper closure of $\mathcal E'(P)$ in $2^{\probabilisticCenter_{k_0}}$. Then 
\begin{align}
\label{eq: upper-closure bound}
    |[\mathcal{E}'(P)]^{\uparrow}| \le 2^{|\probabilisticCenter_{k_0}|} - \left |\ff_{k_0}|_{Supp_{k_0}}(P) \right |
\end{align}
since each $F \in [\mathcal{E}'(P)]^{\uparrow}$ can not be contained in $\ff_{k_0}|_{Supp_{k_0}}(P) \subset 2^{\probabilisticCenter_{k_0}}$
due to Lemma~\ref{lemma: exact order}. From Corollary~\ref{corollary: outside degree}, we have
\begin{align*}
    \frac{\left |\ff_{k_0}|_{Supp_{k_0}}(P) \right |}{|\ff_{k_0}|} \ge d_{k_0}(P) - O \left ( n^{-|P| - 1/2} \log^{1/2} n \right ).
\end{align*}
Bounding $d_{k_0}(P)$ from below via Lemma~\ref{lemma: exact order}, we obtain
\begin{align*}
     \frac{\left |\ff_{k_0}|_{Supp_{k_0}}(P) \right |}{|\ff_{k_0}|} \ge \prod_{k \in Out_{k, k_0}} \left [ \binom{|A_{k, k_0}|}{\mSet_{k, k_0}} \right ]^{-1} - O \left ( n^{-|P| - 1/2} \log^{1/2} n \right ).
\end{align*}
Due to Claim~\ref{claim: hh_k approximation quality}, $|\ff_{k_0}| = |\hh_{k_0}| (1 + O(n^{-1})) = (1 + O(n^{-1})) 2^{|\probabilisticCenter_{k_0}|} \prod_{k \in Out_{k_0}} \binom{|A_{k, k_0}|}{\mSet_{k, k_0}}$ and so
\begin{align*}
    \left |\ff_{k_0}|_{Supp_{k_0}}(P) \right | \ge 2^{\probabilisticCenter_{k_0}} \left (1 - O \left ( \sqrt{\frac{\log n}{n}}\right )\right ).
\end{align*}
From the above and~\eqref{eq: upper-closure bound}, it follows that
\begin{align}
\label{eq: upper-closure pure bound}
    |[\mathcal{E}'(P)]^{\uparrow}| \le O \left ( \sqrt{\frac{\log n}{n}}\right ) 2^{|\probabilisticCenter_{k_0}|}.
\end{align}
Consider maximal integer $s$ such that $|\mathcal{E}'(P)| \ge \sum_{i = 0}^{m - 2} \binom{|\probabilisticCenter_{k_0}|}{i} + s \binom{|\probabilisticCenter_{k_0}|}{m - 1}$. If $\left | \mathcal{E}'(P) \right | \ge  \sum_{i = 0}^{m - 2} \binom{|\probabilisticCenter_{k_0}|}{i} + s \binom{|\probabilisticCenter_{k_0}|}{m - 1}$, then the initial segment $\mathcal{B}(|\mathcal E'(P)|)$ in lexicographical order contains all sets starting with $i \in [s]$. Thus, Theorem~\ref{theorem: Kruskal-Katona theorem} implies
\begin{align*}
    |[\mathcal{E}'(P)]^{\uparrow}| \ge 2^{|\probabilisticCenter_{k_0}|} - 2^{|\probabilisticCenter_{k_0}| - s} - \binom{|\probabilisticCenter_{k_0}|}{\le m - 1}.
\end{align*}
Combined with inequality~\eqref{eq: upper-closure pure bound}, it implies $s = 0$ for large enough $n$. Thus, $|\mathcal{E}'(P)| n^{-m} = O(n^{-1})$ and~\eqref{eq: indicator inequality final proof} holds. It leads to~\eqref{eq: beta is asymptotically large than degrees} as it was discussed. Thus, for a families $\ff'_k$ defined in~\eqref{eq: final proof families modification}, we have $\prod_{k \in [\ell]} |\ff'_k| > \prod_{k \in [\ell]} |\ff_k|$ due to~\eqref{eq: final proof modified families product}, the contradiction.
\end{proof}

\section{Sketch of the proof of Theorem~\ref{theorem: extremal graph coloring}}\label{sec5}

    We start by determining the tournament $T_\ell$. For each $k \in [\ell]$, let $\ff_k$ be an extremal family and
    \begin{align*}
        \hh_k = 2^{\probabilisticCenter_k} \vee \bigvee_{k' \in Out_k} \binom{A_{k,k'}}{\le 1}.
    \end{align*}
    We are going to determine the most of the sets of $\ff_k \setminus \hh_k$. Define
    \begin{align*}
        \mathcal{E}_k^{exp} & = \left \{F \in \ff_k \mid F \cap A_{S} \neq \varnothing \text{ for some } S \not \in \binom{Out_k}{2} \right \}.\\
    \end{align*}
    Consider $F \in \mathcal{E}^{exp}_k$ and assume that  $x \in F \cap A_{S}$ for some particular $S \not \in \binom{Out_k}{2}$. Then, there is $s \in S$, such that $k \in Out_s$. If $x \in \probabilisticCenter_s$, then $d_k(x) = e^{-\Omega(n)}$ due to Lemma~\ref{lemma: direction of tentacles}. If $x \in \probabilisticCenter_{s'}$ for $s' \in Out_k$, we have $d_k(x) = e^{-\Omega(n)}$ due to Proposition~\ref{proposition: child  can not overlap}. Consequently, we have $|\mathcal{E}_k^{exp}| \le e^{-\Omega(n)}|\ff_k|$.

    Next, define
    \begin{align*}
        \mathcal{E}_k^{sq} & = \left \{ F \in \ff_k \mid |F \setminus \support \hh_k| \ge 2 \right \}{\setminus \mathcal E^{exp}_k}.
    \end{align*}
    Consider $F \in \mathcal{E}_k^{sq}$. Let $A_{s_1, s_2}, A_{s_3, s_4}$, $s_i \in Out_k$, be such that $x \in F \cap A_{s_1, s_2}, y \in F \cap A_{s_3, s_4}$. All $s_1, s_2, s_3, s_4$ must be distinct because otherwise $|F \cap F'| \ge 2$ for some $s_i$ and $F' \in \hh_{s_i}$. Assume $A_{s_1, s_2} \subset \probabilisticCenter_{s_1}, A_{s_3, s_4} \subset \probabilisticCenter_{s_3}$. Due to Proposition~\ref{proposition: multiplicative property of degrees}, we have
    \begin{align*}
        d_{s_1}(x) d_{s_2}(x) d_{s_3}(y) d_{s_4}(y) d_{k}(x, y) = O(n^{-6}).
    \end{align*}
    Due to Claim~\ref{claim: inner set degree}, $d_{s_1}(x), d_{s_3}(y) \ge \frac{1}{2} - O(n^{-1})$. Due to Lemma~\ref{lemma: exact order}, $d_{s_2}(x), d_{s_4}(y) = \Omega(n^{-1})$. Consequently, we have
    \begin{align}
    \label{eq: intersecting for families outside tentacle}
        d_k(x,y) = O(n^{-4}).
    \end{align}
    Hence, we obtain
    \begin{align*}
        \frac{|\mathcal{E}^{sq}_k|}{|\ff_k|} \le \frac{1}{2} \sum_{S \in \binom{Out_k}{2}} \sum_{S' \in \binom{Out_k \setminus S}{2}} \sum_{x \in A_S} \sum_{y \in S'} d_k(x, y) = O(n^{-2}). 
    \end{align*}
    Given $k \in [\ell]$ and $S \in \binom{Out_k}{2}$, define
    \begin{align*}
        B_{k, S} = \{x \in A_S \mid d_k(x) \ge C n^{-5/2}\},
    \end{align*}
    for some large enough constant $C$. We claim that if $C$ is large enough, then $B_{k, S} \cap B_{k', S} = \varnothing$. Indeed, due to Proposition~\ref{proposition: multiplicative property of degrees}, we have
    \begin{align*}
        d_{k}(x) d_{k'}(x) d_{s_1}(x) d_{s_2}(x) = O(n^{-6}),
    \end{align*}
    where $\{s_1, s_2\} = S$. By the definition of $A_S$, $d_{s_1}(x) d_{s_2}(x) = \Omega(n^{-1})$, so $d_{k}(x)$ and $d_{k'}(x)$ can not both be larger than some $C n^{-5/2}$.
    Define
    \begin{align*}
        \mathcal{G}^1_k = \bigsqcup_{S \in \binom{Out_k}{2}} \left (2^{\probabilisticCenter_k} \vee \binom{B_{k,S}}{1} \vee \bigvee_{s \in Out_k \setminus S} \binom{A_{k, s}}{\le 1} \right ).
    \end{align*}
    Then, we have 
    \begin{align*}
        |\ff_k \setminus (\hh_k \sqcup \mathcal{G}^1_k)| \le |\mathcal{E}^{sq}_k| + |\mathcal{E}^{exp}_k| + n \cdot C n^{-5/2} |\ff_k| = O(n^{-3/2}) |\ff_k|.
    \end{align*}
    Meanwhile, we get
    \begin{align*}
        \frac{|\mathcal{G}^1_k|}{|\hh_k|} = (1 + O(n^{-1})) \sum_{S \in \binom{Out_k}{2}} \frac{|B_{k, S}|}{\prod_{s \in S}|A_{k, s}|} = (1 + O(n^{-1})) \frac{\binom{\ell}{2}^2}{n^2} \sum_{S \in \binom{Out_k}{2}} |B_{k, S}|,
    \end{align*}
    where we use $|A_S| = (1 + O(n^{-1})) n / \binom{\ell}{2}$ from Theorem~\ref{theorem: maximal families structure}. Families $\mathcal{J}^1_k = \hh_k \sqcup \mathcal{G}^1_k$, $k \in [\ell]$, are overlapping, so $|\ff_k \setminus \hh_k| = |\mathcal{G}_k^1| - O(n^{-3 / 2}) |\ff_k|$, since we have $\prod_{k \in [\ell]} |\ff_k| < \prod_{k \in [\ell]} |\mathcal{J}^1_k|$ otherwise. Consequently, we obtain
    \begin{align*}
        \prod_{k \in [\ell]} |\ff_k| & = \prod_{k \in [\ell]} |\hh_k| \left (1 + \frac{|\mathcal{G}^1_k|}{|\hh_k|} + O(n^{-3/2})\right ) \nonumber \\
        & = \left (1 + \frac{\binom{\ell}{2}^2}{n^2} \cdot \sum_{k \in [\ell]} \sum_{S \in \binom{Out_k}{2}} |B_{k, S}| + O(n^{-3/2}) \right ) \prod_{k \in [\ell]} |\hh_k|.
    \end{align*}
    Define $In_S = \bigcap_{s \in S} In_s$. Then, we have
    \begin{align*}
        \sum_{k \in [\ell]} \sum_{S \in \binom{Out_k}{2}} |B_{k, S}| = \sum_{S \in \binom{[\ell]}{2}} \sum_{k \in In_{S}} |B_{k, S}| \le \sum_{S \in \binom{[\ell]}{2}} |A_S| \cdot \indicator \left \{In_S \neq \varnothing \right \}.
    \end{align*}
    Thus, we get
    \begin{align}
    \label{eq: first-order expansion of ff_k - upper bound}
        \prod_{k \in [\ell]} |\ff_k| \le \left ( 1 + \frac{\binom{\ell}{2}}{n} \sum_{S \in \binom{[\ell]}{2}} \indicator \{In_S \neq \varnothing\} + O(n^{-3/2}) \right ) \prod_{k \in [\ell]} |\hh_k|.
    \end{align}
    Hence, we can obtain the following lemma.
    \begin{lemma}
    \label{lemma: functional 1 for tournament}
        Let $\ff_k, k\in [\ell],$ be overlapping families from the extremal example. Then, the corresponding tournament maximizes the following functional:
        \begin{align}
        \label{eq: functional-1 for tournament}
            r(T_\ell) = \sum_{S \in \binom{[\ell]}{2}} \indicator \left \{In_S \neq \varnothing \right \}.
        \end{align}
        In particular, for extremal families $\ff_k$ it holds
        \begin{align}
        \label{eq: first-order expansion of ff_k}
            \prod_{k \in [\ell]} |\ff_k| = \left ( 1 + \frac{\binom{\ell}{2} \cdot  r^*}{n} + O(n^{-3/2}) \right ) \prod_{k \in [\ell]} |\hh_k|,
        \end{align}
        where $r^* = \max_{T}r(T)$ and the maximum is taken over all possible tournaments on $\ell$ vertices.
    \end{lemma}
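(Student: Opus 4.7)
The plan is to complement the upper bound displayed just before the lemma, namely
\begin{align*}
    \prod_{k \in [\ell]} |\ff_k| \le \left(1 + \frac{\binom{\ell}{2}}{n} r(T_\ell) + O(n^{-3/2})\right) \prod_{k \in [\ell]} |\hh_k|,
\end{align*}
with a matching lower bound obtained by an explicit construction for an arbitrary tournament $T^*$ attaining $r^*$. Fix such $T^*$ and a partition $(A_S^*)_{S \in \binom{[\ell]}{2}}$ of $[n]$ with $|A_S^*| \in \{\lfloor n/\binom{\ell}{2} \rfloor, \lceil n/\binom{\ell}{2} \rceil\}$, declaring $A_S^* \subset \probabilisticCenter_{s}^*$ for $s$ the head of the edge $S$ in $T^*$. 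Let $\hh_k^* = 2^{\bigsqcup_{k' \in In_k^*} A_{k',k}^*} \vee \bigvee_{k' \in Out_k^*} \binom{A_{k,k'}^*}{\le 1}$, and for every $S$ with $In_S(T^*) \neq \varnothing$ pick an arbitrary $k_S \in In_S(T^*)$.

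The building block added to $\hh_{k_S}^*$ mimics the family $\mathcal{G}_k^1$ from the derivation of the upper bound:
\begin{align*}
    \mathcal{G}_k^{1*} = \bigsqcup_{\substack{S \in \binom{Out_k^*}{2} \\ k_S = k}} \Bigl( 2^{\probabilisticCenter_k^*} \vee \binom{A_S^*}{1} \vee \bigvee_{s \in Out_k^* \setminus S} \binom{A_{k,s}^*}{\le 1} \Bigr),
\end{align*}
and $\mathcal{J}_k^{1*} = \hh_k^* \sqcup \mathcal{G}_k^{1*}$. The first key step is to verify that the $\mathcal{J}_k^{1*}$ are pairwise overlapping: if $F \in \mathcal{G}_k^{1*}$ contains $x \in A_S^*$ with $S \in \binom{Out_k^*}{2}$ and $k_S=k$, then the remaining ``tentacle'' coordinates of $F$ land in the $A_{k,s'}^* \subset \probabilisticCenter_{s'}^*$ with $s' \in Out_k^* \setminus S$, so for any $F' \in \mathcal{J}_{k'}^{1*}$ the intersection $F \cap F'$ has at most one element (the only possibilities are the singleton $\{x\}$, an element of $F' \cap \probabilisticCenter_k^*$, or an element in a common ``tentacle''-slot, each contributing at most one). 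A direct count then gives
\begin{align*}
    \prod_{k} |\mathcal{J}_k^{1*}| = \prod_{k}|\hh_k^*| \left(1 + \frac{\binom{\ell}{2}}{n} \, r^* + O(n^{-2})\right),
\end{align*}
since each chosen $S$ contributes $(1+O(n^{-1})) \cdot |A_S^*|/\prod_{s \in S}(|A_{k_S,s}^*|+1) = (1+O(n^{-1})) \binom{\ell}{2}/n$ to $|\mathcal{G}_{k_S}^{1*}|/|\hh_{k_S}^*|$, and there are exactly $r^*$ such $S$.

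By extremality of $(\ff_k)$ we get $\prod_k |\ff_k| \ge \prod_k |\mathcal{J}_k^{1*}|$. Combined with the upper bound and the observation that $\prod_k |\hh_k^*|$ and $\prod_k |\hh_k|$ agree up to a factor $1 + O(n^{-2})$ (both products equal $2^n \prod_S (|A_S|+1)$, a symmetric function of the part sizes that is quadratically stable around the balanced configuration, while both partitions are balanced to within $O(1)$ by Theorem~\ref{theorem: maximal families structure}), this yields
\begin{align*}
    r(T_\ell) \ge r^* - O(n^{-1/2}).
\end{align*}
Since $r(T_\ell), r^* \in \Z$ and $r(T_\ell) \le r^*$ by the definition of $r^*$, we conclude $r(T_\ell) = r^*$ for all sufficiently large $n$. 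Plugging this into the upper bound (which becomes an equality up to the $O(n^{-3/2})$ error thanks to the lower bound) gives~\eqref{eq: first-order expansion of ff_k}. The only technically delicate step is the overlapping verification for $\mathcal{J}_k^{1*}$ and the careful tracking of $O(n^{-2})$ corrections in the two estimates of $\prod_k |\hh_k|$; both are standard given the structural results already established in Section~\ref{section: further structural results}.
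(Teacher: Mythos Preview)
Your proposal is correct and follows essentially the same approach as the paper's proof: you build an explicit overlapping construction $\mathcal{J}_k^{1*}=\hh_k^*\sqcup\mathcal{G}_k^{1*}$ for a tournament $T^*$ attaining $r^*$, then sandwich $\prod_k|\ff_k|$ between the established upper bound and this constructive lower bound, and conclude $r(T_\ell)=r^*$ by integrality. Your treatment is in fact slightly more careful than the paper's, which takes the overlapping property of $\hat\hh_k\sqcup\hat{\mathcal{G}}^1_k$ and the comparison $\prod_k|\hat\hh_k|=(1+O(n^{-2}))\prod_k|\hh_k|$ for granted, whereas you make both points explicit.
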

    \begin{proof}
     Let $\ff_k, k \in [\ell],$ be extremal families and $T_\ell$ be the corresponding tournament. Fix some tournament $\hat{T}_\ell$ that maximizes~\eqref{eq: functional-1 for tournament}, and for each $S$ such that $In_S$ is not empty choose some $k(S) \in In_S$, where $In_S$ is w.r.t. $\hat T_\ell$. For each $k \in [\ell]$, define
        \begin{align*}
            \hat{\mathcal{G}}^1_k = 2^{\probabilisticCenter_k} \vee \bigvee_{S \in \binom{Out_k}{2}, k(S) = k} \left ( \binom{A_{S}}{1} \vee \bigvee_{S' \in \binom{Out_k \setminus S}{2}} \binom{A_{S'}}{\le 1}\right ),
        \end{align*}
        where $Out_k$ is defined with respect to $\hat{T}_\ell$. Also define the corresponding families $\hat{\hh}_k$ as in Theorem~\ref{theorem: maximal families structure}. According to~\eqref{eq: first-order expansion of ff_k - upper bound}, we have
        \begin{align*}
            \left ( 1 + \frac{\binom{\ell}{2} \cdot r(T_\ell)}{n} + O(n^{-3/2})\right ) \prod_{k \in [\ell]} |\hh_k| & \ge \prod_{k \in [\ell]} |\ff_k| \ge  \prod_{k \in [\ell]} |\hat{\hh}_k \sqcup \hat{G}^1_k| \\ 
            & \ge \left ( 1 + \frac{\binom{\ell}{2} \cdot r^*}{n} + O(n^{-2}) \right ) \prod_{k \in [\ell]} |\hh_k|, 
        \end{align*}
        so $r(T_\ell) = r^*$ for large enough $n$, and~\eqref{eq: first-order expansion of ff_k} holds.
    \end{proof}

    Next, we obtain the unique maximizer of~\eqref{eq: functional-1 for tournament} when $\ell = 5$. We have $\sum_{k \in [\ell]} |Out_k| = \binom{\ell}{2}$, so $\max_{k \in [\ell]} |Out_k| \ge \frac{\ell - 1}{2} = 2$. We consider several cases:
    \begin{enumerate}
        \item Suppose that $\max_k |Out_k| = 2$. Then, for all $k \in [5]$ we have $|Out_k| =2$, and, consequently, the functional~\eqref{eq: functional-1 for tournament} is at most the number of vertices of the torunament which is 5.
        \item Suppose that $\max_k |Out_k| = 4$. Then 
        \begin{align*}
            \sum_{S \in \binom{[5]}{2}} \indicator \{In_S \neq \varnothing\} = \binom{\ell - 1}{2} = 6.
        \end{align*} 
        \item Suppose that $\max_k |Out_k| = 3$. Without loss of generality, we assume that the maximum is attained when $k = 2$ and $Out_2 = \{3, 4, 5\}$.
        If $Out_1 = \{2\}$, then $r(T_\ell) \le |\binom{\{3, 4, 5\}}{2}| + |\{\{1, x\} \mid x \in \{3, 4, 5\}\}| = 6$. If $|Out_1| = 2$, without loss of generality, assume $Out_1 = \{2, 3\}$, and $(5, 4) \in E(T_\ell)$. Then, we have
        \begin{align*}
            \{S \mid In_S \neq \varnothing \} \subset \binom{\{3, 4, 5\}}{2} \cup \{\{1, 3\}, \{1, 4\}, \{2, 3\} \},
        \end{align*} 
        so $r(T_{\ell}) \le 6$. Finally, consider the case when $|Out_1| = 3$. By simmetry, we may assume that $Out_1 = \{2, 4, 5\}$. That yields the following:
        \begin{align*}
            \{S \mid In_S \neq \varnothing \} \subset  \binom{\{3, 4, 5\}}{2} \cup \{\{1, 4\}, \{1, 5\}, \{2, 4\}, \{2, 5\} \},
        \end{align*}
        so $r^* \le 7 = r(\mathbf{T}_5)$. This bound is achieved if and only if $Out_3 = \{1, 4, 5\}$. We still did not determine the direction of the edge between $4$ and $5$. It is easy to check that both choices produce the same graph up to an isomorphism.
    \end{enumerate}
    Consequently, the tournament $T_\ell$ is isomorphic to $\mathbf{T}_5$ when $\ell = 5$. In $\mathbf{T}_5$ for all $S \in \binom{[5]}{2} \setminus \{\{4, 5\}\}$ we have $|In_S| \in \{0, 1\}$. For $k \in \{1, 2, 3\}$, let $o_k$ be the unique element of $Out_k \cap \{1, 2, 3\}$ and define
    \begin{align*}
        \mathcal{Q}_k = \left [2^{\probabilisticCenter_k} \vee \binom{A_{o_k, 4}}{1} \vee \binom{A_{k, 5}}{\le 1}\right ] \cup \left [ 2^{\probabilisticCenter_k} \vee \binom{A_{o_k, 5}}{1} \vee \binom{A_{k,4}}{\le 1}\right ].
    \end{align*}
    Note that $\probabilisticCenter_k = A_{i_k, k}$, where $i_k$ is the unique element of $In_k$. We claim that $\hh_k \sqcup \mathcal{Q}_k \subset \ff_k$ for $k \in \{1, 2, 3\}$.
    \begin{claim}
    \label{claim: 2-order tentacles for l = 5}
        Let $\ff_k$ be extremal families, $k \in [\ell], \ell = 5$ with the corresponding tournament $\mathbf{T}_5$. Then, $\hh_k \sqcup \mathcal{Q}_k \subset \ff_k$ for any $k \in [3]$.
    \end{claim}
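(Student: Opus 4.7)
The plan is to run the modification argument from the final step of the proof of Theorem~\ref{theorem: maximal families structure}, but now enlarging $\ff_{k_0}$ by the ``order-two'' tentacle family $\mathcal{Q}_{k_0}$ rather than by $\hh_{k_0}$. Since Theorem~\ref{theorem: maximal families structure} already gives $\hh_{k_0} \subset \ff_{k_0}$, the task reduces to showing $\mathcal{Q}_{k_0} \subset \ff_{k_0}$. A direct case check using the structure of $\mathbf{T}_5$ and the disjointness of the sets $A_{k_1, k_2}$ from Corollary~\ref{corollary: tentacles structure} confirms that every $F \in \mathcal{Q}_{k_0}$ satisfies $|F \cap G| \le 1$ for every $G \in \hh_k$, $k \neq k_0$: for each 2-element subset $\{a, b\} \subset F$ the pair of $A$-labels of $a$ and $b$ is never simultaneously realized as tentacles of a single $\hh_k$ with $k \neq k_0$.

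Suppose for contradiction that some $F^* \in \mathcal{Q}_{k_0} \setminus \ff_{k_0}$ exists, and define the modified families
\begin{align*}
    \ff_{k_0}' = \ff_{k_0} \cup \mathcal{Q}_{k_0}, \qquad
    \ff_k' = \ff_k \setminus \bigcup_{e \in \mathcal{E}_k} \ff_k(\saveSet{e}) \quad (k \neq k_0),
\end{align*}
where $\mathcal{E}_k = \{e \in \ff_k : |e| = 2, \; e \subset F \text{ for some } F \in \mathcal{Q}_{k_0}\}$. An argument as in the proof of Theorem~\ref{theorem: maximal families structure} shows that the $\mSet$-overlapping property is preserved, so extremality would force $\prod_k |\ff'_k| \le \prod_k |\ff_k|$, and I aim to derive the opposite strict inequality.

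The obstruction pairs in $\mathcal{E}_k$ split into three types according to how they meet the ``shape'' of $\mathcal{Q}_{k_0}$: (i) $\{t, x\}$ with $t \in \probabilisticCenter_{k_0}$ and $x \in A_{o_{k_0}, k'}$; (ii) $\{t, y\}$ with $t \in \probabilisticCenter_{k_0}$ and $y \in A_{k_0, k'}$; and (iii) $\{x, y\}$ with $x, y$ as above (where $k' \in \{4, 5\}$). For types (i) and (iii), Lemma~\ref{lemma: direction of tentacles} together with Proposition~\ref{proposition: child  can not overlap} (whose triangle hypothesis is satisfied by every relevant configuration in $\mathbf{T}_5$) gives $d_k(e) = e^{-\Omega(n)}$ for every $k$ in which such an $e$ could possibly appear. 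For type (ii), applied to $k = i_{k_0}$, the triangle needed by Proposition~\ref{proposition: child  can not overlap} is oriented in the wrong direction, so one instead invokes Proposition~\ref{proposition: multiplicative property of degrees} with $K = \{k_0, i_{k_0}, k'\}$ and the choices $F_{k_0} = F_{i_{k_0}} = \{t, y\}$, $F_{k'} = \{y\}$; combined with the asymptotic $d_{k_0}(\{t, y\}) = \Theta(n^{-1})$ (a body-tentacle pair of $\hh_{k_0}$) and $d_{k'}(y) = \Theta(1)$, this yields $d_{i_{k_0}}(\{t, y\}) = O(n^{-3})$.

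Meanwhile, a single obstruction $e$ forbids an entire orbit of sets $F \in \mathcal{Q}_{k_0}$ with $e \subset F$, of size at least $\Omega(n \cdot 2^{|\probabilisticCenter_{k_0}|})$, so each such $e$ contributes at least $\Omega(n^{-2})$ to $\beta := |\mathcal{Q}_{k_0} \setminus \ff_{k_0}| / |\ff_{k_0}|$. Since the loss per obstruction is at most a factor $n^{-1}$ below the gain it generates (exponentially smaller in the type-(i) and (iii) cases), summing over $\mathcal{E}_k$ yields $\beta > \sum_{k \neq k_0} \gamma_k$, hence $(1 + \beta) \prod_{k \neq k_0} (1 - \gamma_k) > 1$, the desired contradiction. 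The main obstacle is exactly the delicate case (ii): the ``wrong-direction'' triangle in $\mathbf{T}_5$ blocks the immediate exponential bound on $d_{i_{k_0}}(e)$, and keeping the gain-over-loss ratio strictly above one depends on carefully tracking the extra factor of $n$ saved by $d_{k_0}(\{t, y\})$ being a body-tentacle pair of $\ff_{k_0}$.
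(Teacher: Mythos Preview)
Your approach is essentially the paper's: enlarge $\ff_{k_0}$ by $\mathcal{Q}_{k_0}$, strip the offending pairs from the other families, and argue that any obstruction contributes exponentially small loss while forcing a polynomial gain. The paper runs this for all three $k_0\in[3]$ simultaneously, you do it one $k_0$ at a time; this is immaterial.

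However, the case you flag as the ``main obstacle'' is in fact empty. A pair of type~(ii), namely $\{t,y\}$ with $t\in\probabilisticCenter_{k_0}$ and $y\in A_{k_0,k'}$, lies in $\hh_{k_0}$ (it is a body element together with a genuine tentacle of $\ff_{k_0}$), and Theorem~\ref{theorem: maximal families structure} already gives $\hh_{k_0}\subset\ff_{k_0}$. Since the families are $1$-overlapping, such a pair cannot belong to any $\ff_k$ with $k\neq k_0$, so $\mathcal{E}_k$ contains no type-(ii) pairs at all. The paper sidesteps this by taking $\mathcal{E}_k^Q=(\mathcal{Q}_{k'}\cap\ff_k)^{(2)}$, i.e.\ only pairs that are themselves \emph{members} of $\mathcal{Q}_{k'}$; since every set in $\mathcal{Q}_{k'}$ must contain an element of $A_{o_{k'},4}\cup A_{o_{k'},5}$, only your types~(i) and~(iii) survive.

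This matters because your summing step ``$\beta>\sum_{k}\gamma_k$'' would not actually go through if type~(ii) were present. The per-obstruction gain contributions to $\beta$ overlap heavily (many obstructions forbid the same $F\in\mathcal{Q}_{k_0}$), so one cannot add them; meanwhile up to $\Theta(n^2)$ type-(ii) pairs each costing $\Theta(n^{-3})$ would give total loss $\Theta(n^{-1})$, which is the same order as the maximal possible $\beta=|\mathcal{Q}_{k_0}|/|\ff_{k_0}|$. Once type~(ii) is discarded, every remaining obstruction has $d_k(e)=e^{-\Omega(n)}$, so the total loss is $e^{-\Omega(n)}$ while a single obstruction already forces $\beta=\Omega(n^{-3})$, and the comparison is immediate---exactly as in the paper.
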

    \begin{proof}
        Define $\mathcal{E}_k^Q = \bigcup_{k' \in [\ell] \setminus \{k\}} (\mathcal{Q}_{k'} \cap \ff_{k})^{(2)}$. Then, families
        \begin{align*}
            & \ff_k' = \ff_k \setminus \bigcup_{e \in \mathcal{E}^Q_k} \ff_k(\saveSet{e}) \cup \mathcal{Q}_k, \quad k \in [3], \\
            & \ff_{k}' = \ff_k \setminus \mathcal{E}^{Q}_k, \quad k \in \{4, 5\},
        \end{align*}
        are overlapping. Meanwhile, if $e \in \mathcal{E}_k^Q$ then $|e \cap \support \hh_{k_1}| = |e \cap \support \hh_{k_2}| = 1$ for some two distinct indices $k_1, k_2 \in \{1, 2, 3\} \setminus \{k\}$. By the construction of $\mathbf{T}_5$, either $(k_1, k) \in E(\mathbf{T}_5)$ or $(k_2, k) \in E(\mathbf{T}_5)$, so $d_k(e) = e^{-\Omega(n)}$ due to Lemma~\ref{lemma: direction of tentacles} and Proposition~\ref{proposition: child  can not overlap}. If $\mathcal{E}^Q_k$ is not empty for some $k$, we have
        \begin{align*}
            \prod_{k \in [5]} |\ff_{k}'| \ge (1 - n^2 e^{- \Omega(n)})^{\ell} \prod_{k \in \{1, 2, 3\}} \left ( 1 + \frac{1}{|\ff_k|} \left |\bigcup_{k' \in [5] \setminus \{k\}} \bigcup_{e \in \mathcal{E}^Q_{k'}} \mathcal{Q}_k'(\saveSet{e}) \right |\right ) \prod_{k \in [\ell]}|\ff_k|
        \end{align*}
        For any $e \in \mathcal{Q}_k^{(2)}$ which is not fully contained in $\support \hh_k$, we have $|\mathcal{Q}_k(e)| \ge 2^{|\probabilisticCenter_k| - 1}$, so 
        \begin{align*}
            \prod_{k \in \{1, 2, 3\}} \left ( 1 + \frac{1}{|\ff_k|} \left |\bigcup_{k' \in [5] \setminus \{k\}} \bigcup_{e \in \mathcal{E}^Q_{k'}} \mathcal{Q}_k'(\saveSet{e}) \right |\right ) = 1 + \Omega(n^{-3}),
        \end{align*}
        the contradiction with maximality of $\ff_k$.
    \end{proof}

    \begin{figure}
     \centering
     \begin{subfigure}[b][][t]{0.31\textwidth}
         \centering
         \includegraphics[width=\textwidth]{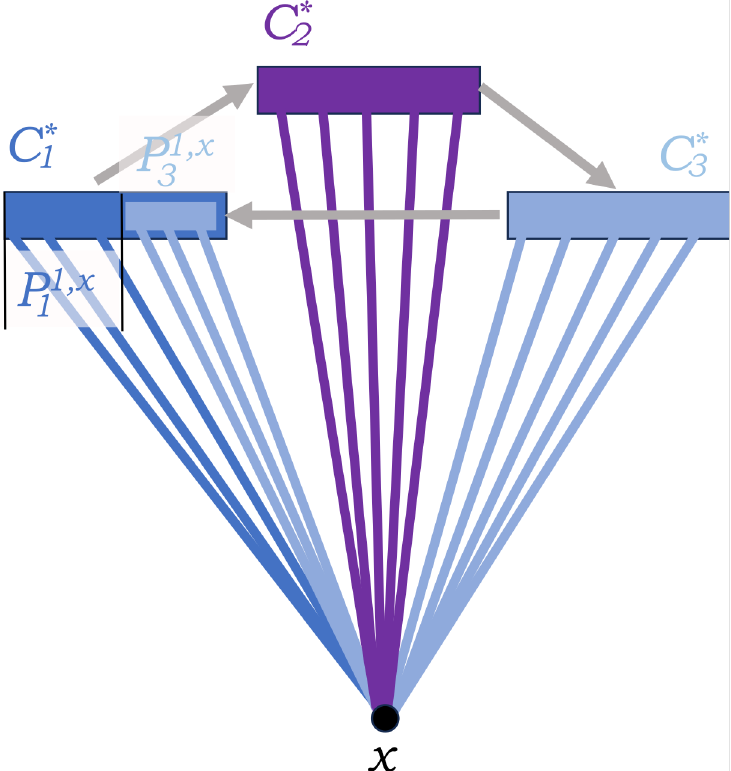}
         \caption{ }
         \label{fig: definition of P}
     \end{subfigure}
     \hfill
     \begin{subfigure}[b][][t]{0.31\textwidth}
         \centering
         \includegraphics[width=\textwidth]{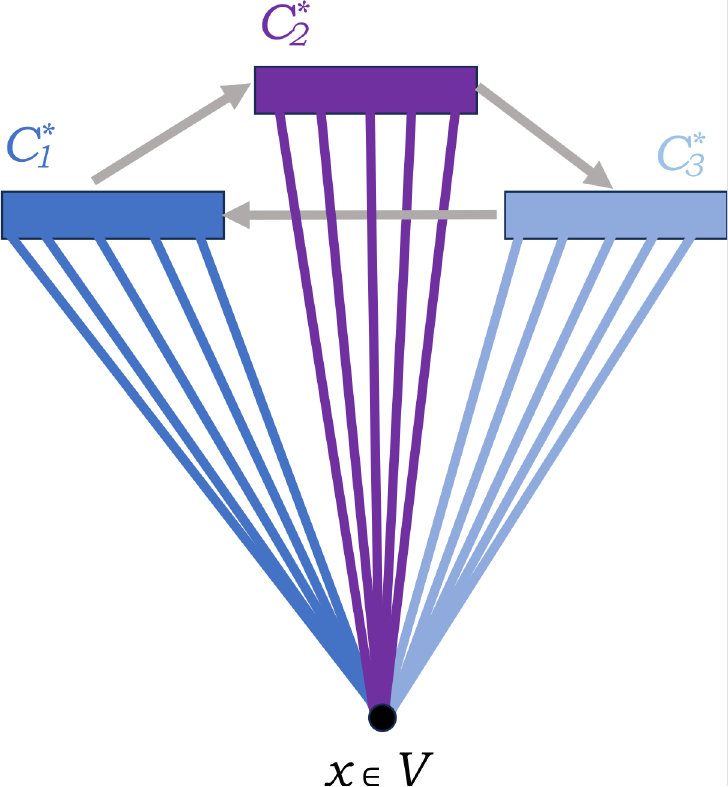}
         \caption{}
         \label{fig: x in V}
     \end{subfigure}
     \hfill
     \begin{subfigure}[b][][t]{0.31\textwidth}
         \centering
         \includegraphics[width=\textwidth]{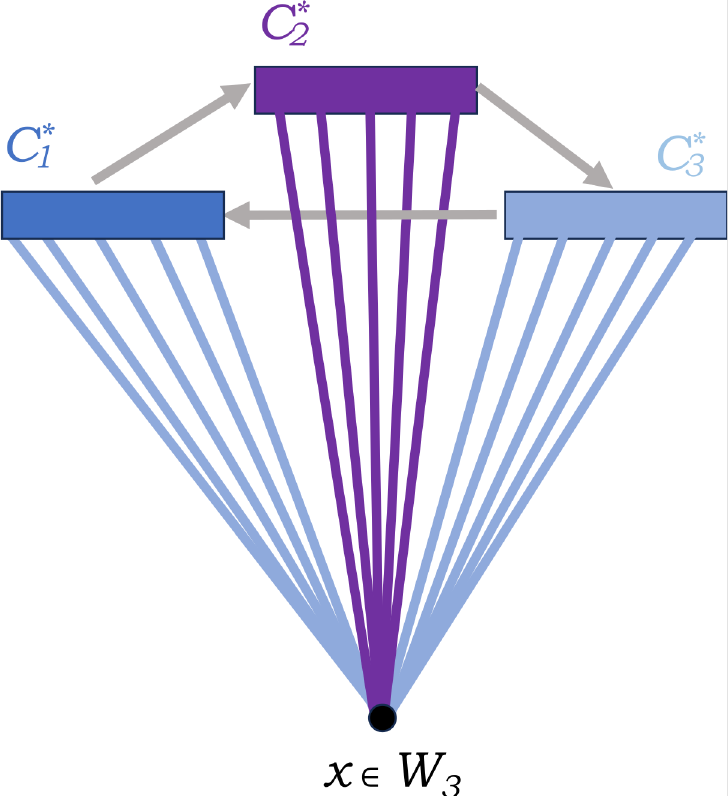}
         \caption{ }
         \label{fig: x in W}
     \end{subfigure}
        \caption{Different configurations of edges between $A_{4, 5}$ and $\probabilisticCenter_{k}$, $k \in [3]$.}
\end{figure}

    As our problem is equivalent to coloring of $K_n$, we are rest to determine the color of edges between $A_{4, 5}$ and $A_{k, k'}$, $k, k' \in \{1, 2, 3\}$, since colors of other edges are determined either by Theorem~\ref{theorem: maximal families structure} or Claim~\ref{claim: 2-order tentacles for l = 5}.

    For each $x \in A_{4, 5}$, define $P_k^{s, x} \subset \probabilisticCenter_k$ as
    \begin{align*}
        P_k^{s, x} = \{y \in \probabilisticCenter_s \mid \{x, y \} \in \ff_k \}.
    \end{align*}
    Clearly, $P_k^{s,x}, k \in [5] \setminus \{k\}$, form a partition of $\probabilisticCenter_s$. For the illustration, see Figure~\ref{fig: definition of P}. Then, we have
    \begin{align*}
        \ff_k = 
            \hh_k \sqcup \mathcal{Q}_k  \cup \binom{[n]}{1} \sqcup \bigsqcup_{
                x \in A_{4, 5}
            } \{\{x\}\} \vee \left \{ \left [2^{P_k^{k, x}} \vee  \binom{P^{o_k, x}_{k}}{\le 1} \right ] \cup \binom{P^{ i_k, x}_{ k}}{1} \right \},
    \end{align*}
    for $k \in [3]$, and 
    \begin{align*}
        \ff_k = \hh_k \cup \binom{[n]}{1} \sqcup \bigsqcup_{x \in A_{4, 5}} \{\{x\}\} \vee \bigsqcup_{k' \in [3]} \binom{P_{k'}^{k, x}}{1} 
    \end{align*}
    for $k \in \{4, 5\}$. Given a family of sets $\ff$, here $\{\{x\}\} \vee \ff$ means attaching $x$ to all sets of $\ff$. Note that if $y \in P_k^{k', x}$ when $k' \in In_k$, we have $d_k(x, y) = e^{-\Omega(n)}$ due to Lemma~\ref{lemma: direction of tentacles}. Consequently, we obtain
    \begin{align}
        \prod_{k \in [5]} |\ff_k| & = \prod_{k \in [3]} |\hh_k \sqcup \mathcal{Q}_k| \left (1 + \frac{1}{|\hh_k \sqcup \mathcal{Q}_k|} \sum_{x \in A_{4, 5}} (1 + |P_{k}^{o_k, x}|) 2^{|P_k^{k, x}|} + e^{-\Omega(n)}\right ) \nonumber \\
        & \quad \times \prod_{k \in \{4, 5\}} |\hh_k| (1 + e^{- \Omega(n)}), \label{eq: 2-order remainder with exp}
    \end{align}
    
    Next, we study when the product
    \begin{align}
    \label{eq: 2-order remainder}
        \prod_{k \in [3]} \left (1 + \frac{1}{|\hh_k \sqcup \mathcal{Q}_k|} \cdot \sum_{x \in A_{4, 5}} (1 + |P_{k}^{o_k, x}|) 2^{|P_k^{k, x}| }\right )
    \end{align}
    achieves its maximum $p^*$. It is a convex function of variables $$\{|P_{k}^{o_k,x}|\}_{x \in A_{4, 5}, k \in [3]} \cup \{|P_k^{k, x}|\}_{k \in [3], x \in A_{4, 5}},$$ and we have $0 \le |P_{i_k}^{k, x}| + |P_{k}^{k, x}| \le |\probabilisticCenter_k|$. So for each $x$ and $k \in [3]$, we have either $P_k^{k,x} = \varnothing, P_{i_k}^{k, x} = \probabilisticCenter_k$ or $P_{i_k}^{k,x} = \varnothing, P_k^{k, x} = \probabilisticCenter_k$. Indeed, if we have $P_{i_k}^{k, x} \not \in \{\varnothing, \probabilisticCenter_k \}$ for some $k$ and $x$, then there is a polynomial gap of magnitude $\Omega(n^{-2})$ between $p^*$ and~\eqref{eq: 2-order remainder}. One can easily construct overlapping families $\ff_k', k \in [5],$ such that $\prod_{k} |\ff_k'| = p^* \prod_{k \in [3]} |\hh_k \sqcup \mathcal{Q}_k| \cdot \prod_{k \in \{4, 5\}} |\hh_k|$, so if $P_{i_k}^{k, x} \not \in \{\varnothing, \probabilisticCenter_k \}$, then \eqref{eq: 2-order remainder with exp} implies $\prod_k |\ff_k'| > \prod_k |\ff_k|$, the contradiction.

    For each $x$, we have two possibilities:
    \begin{enumerate}
        \item for each $k \in [3]$, $P^{k, x}_k = \probabilisticCenter_k$;
        \item there exists some $k \in [3]$ such that $P_k^{k, x} = \probabilisticCenter_k$ and $P_{k}^{o_k, x} = \probabilisticCenter_{o_k}$. Note that then we have $P_{i_k}^{i_k, x} = \probabilisticCenter_{i_k}$.
    \end{enumerate}
    Otherwise, we have $d_k(x) = e^{-\Omega(n)}$ for some $k \in [3]$, so we are able to enlarge the product $\prod_{k \in [\ell]} |\ff_k|$ by recoloring edges from $x$ to $\probabilisticCenter_k$.
    
    The set of $x$ such that the former case holds for them, we denote by $V$. The set of $x$ such that the latter case holds for them with some $k$, we denote by $W_k$. For the illustration, see Figures~\ref{fig: x in V} and~\ref{fig: x in W}.

    Clearly, sets $V, W_1, W_2, W_3$ are disjoint and form a partition of $A_{4, 5}$. Next, for each $k \in [3]$, we have
    \begin{align*}
        |\ff_k| \le |\hh_k| + |\mathcal{Q}_k| + 2^{|\probabilisticCenter_k|} |V| + 2^{|\probabilisticCenter_k|} \cdot \left | \binom{A_{k, o_k}}{\le 1}\right | \cdot  |W_k| + 2^{|\probabilisticCenter_k|} \cdot \left | \binom{W_{o_k}}{1}\right | + e^{-\Omega(n)} \cdot 2^{|\probabilisticCenter_k|}.
    \end{align*}
    Overlapping families
    \begin{align*}
        \mathcal{W}_k =
        \begin{cases}
            \hh_k \sqcup \mathcal{Q}_k \sqcup \left (2^{\probabilisticCenter_k} \vee \binom{V \sqcup W_{o_k}}{1} \right ) \sqcup \left (2^{\probabilisticCenter_k} \vee \binom{W_k}{1} \vee \binom{A_{k, o_k}}{\le 1}\right ) , & k \in [3] \\
            \hh_k, & k \in \{4, 5\},
        \end{cases}
    \end{align*}
    attains this upper bound up to a factor $1 + e^{-\Omega(n)}$. If $V$ is not empty, one can remove some $x \in V$ from $V$ and add $x$ to some $W_k$. That enlarges $\mathcal{W}_k$ by a factor $1 + \Omega(n^{-2})$, but other families $\mathcal{W}_{k'}$, $k' \in [5] \setminus \{k\}$ have their size decreased by a factor $1 + O(n^{-3})$ at most. Thus, $V = \varnothing$.

    Hence, $W_1, W_2, W_3$ form a partition of $A_{4, 5}$. We have all edges of $K_n$ colored, so the only way to enlarge families $\ff_k, k \in [5],$ is to add singletons $\binom{[n]}{1}$ to each of them. Thus, Theorem~\ref{theorem: extremal graph coloring} holds.

\section{Conclusion}

Firstly, we obtained the asympotics of $\targetFunction$. We applied a number of techniques, including  different correlation inequalities, Hoeffding inequality and some hypergraph theory. We have developed some tricks and methods that allowed us to obtain a number of structural results. Thanks to that, we managed to show the asymptotics of $\targetFunction$ and describe extremal examples.



\printbibliography

\newpage

\appendix

\section{Proof of ancillary propositions}

In this section, we give the omitted proofs of some ancillary propositions.

\subsection{Proof of Proposition~\ref{proposition: inner degree with an element}}
\label{subsection: proof of proposition inner degree with an element}

\begin{proof}
For a reminder, 
\begin{align*}
    d_{k_1 \to k_2}^{(\ge \mSet_{k_1, k_2})} = \frac{1}{|\ff_{k_1}|} \left | \{F \in \ff_{k_1} \mid |F \cap \probabilisticCenter_{k_2}| \ge \mSet_{k, k_2} \} \right |.
\end{align*}
Applying Lemma~\ref{lemma: direction of tentacles} with $\ff_{k}' = \ff_{k}|_{Supp_k}$ for families $\ff_{k_1}$ and $\ff_{k_2}$, we obtain
\begin{align*}
    d_{k_1 \to k_2}^{(\ge \mSet_{k_1, k_2})} = 1 - O(n^{-1})
\end{align*}
since $|\ff_{k}|_{Supp_k}| = (1 - O(n^{-1})) |\ff_{k}|$ due to Proposition~\ref{proposition: subfamilies 1/n}. Let $T$ be arbitrary subset of $\probabilisticCenter_{k_1}$ of size at most $m = \max_{S \in \binom{[\ell]}{2}} \mSet_{S}$. Then
\begin{align*}
    \frac{1}{|\ff_{k_1}|}
    \sum_{P \in \binom{A_{k_1, k_2}}{\le \mSet_{k_1, k_2 - 1}}} 
        |\ff_{k_1}(T \cup P, T \cup A_{k_1, k_2})|
    & \le 
    \frac{1}{|\ff_{k_1}|}
    \sum_{P \in \binom{A_{k_1, k_2}}{\le \mSet_{k_1, k_2 } - 1}} 
        |\ff_{k_1}(P, A_{k_1, k_2})|.
\end{align*}
The right-hand side is the portion of sets that intersect $\probabilisticCenter_{k_1}$ in less than $\mSet_{k, k_1}$ elements. Thus, it can be bounded by $1 - d_{k_1 \to k_2}^{(\ge \mSet_{k_1, k_2})} = O(n^{-1})$. Hence,
\begin{align}
d_{k_1}(T) & = \frac{1}{|\ff_{k_1}|} 
    \sum_{P \in \binom{A_{k_1, k_2}}{\le \mSet_{k_1, k_2}}} |\ff_{k_1}(T \cup P, T \cup A_{k_1, k_2})| \nonumber \\
    & = \frac{1}{|\ff_{k_1}|} \sum_{P \in \binom{A_{k_1, k_2}}{\mSet_{k_1, k_2}}}  |\ff_{k_1}(T \cup P, T \cup A_{k_1, k_2})| + O(n^{-1}). \label{eq: d_k(T) upper bound, outside degree lemma}
\end{align}
A simple double-counting argument ensures us that 
\begin{align*}
     \frac{1}{|\ff_{k_1}|} \sum_{P \in \binom{A_{k_1, k_2}}{\mSet_{k_1, k_2}}}  |\ff_{k_1}(T \cup P, T \cup A_{k_1, k_2})| & \le \sum_{P \in \binom{A_{k_1, k_2}}{\mSet_{k_1, k_2}}} d_{k_1}(T \cup P) \\
     & \le
    \frac{1}{\mSet_{k_1, k_2}} \sum_{x \in A_{k_1, k_2}} d_{k_1}(T \cup \{x\}).
\end{align*}
Hence, according to inequality~\eqref{eq: d_k(T) upper bound, outside degree lemma},
\begin{align}
\label{eq: dk(T) upper bound final, outside degree lemma}
    d_{k_1}(T) \le \frac{1}{\mSet_{k_1, k_2}} \sum_{x \in A_{k_1, k_2}} d_{k_1}(T \cup \{x\}) + O(n^{-1}). 
\end{align}
Due to Claim~\ref{claim: inner set degree}, the following lower bound holds:
\begin{align}
\label{eq: dk(T) lower bound, outside degree lemma}
    d_{k_1}(T) \ge 2^{-|T|} - O(n^{-1}).
\end{align}

Define $I_T^{k_1, k_2} = \{x \in A_{k_1,k_2} \mid d_{k_1}(T \cup \{x\}) \le 2^{-|T| - 1} d_{k_1}(x) \}$. Moreover, let $\Delta = n^{-3/2} \cdot \sqrt{\log n}$ and $I_\Delta = \bigcup \mathcal{I}_{\Delta}$, where $\mathcal{I}_{\Delta}$ arises from Lemma~\ref{lemma: symmetrization argument}\ref{symmetrization lemma: case 1} applied for $\sDomain = \binom{A_{k_1, k_2}}{1}$ and $S_0 = \arg \min_{x \in \sDomain} d_{k_1}(x)$. Conditions of Lemma~\ref{lemma: symmetrization argument} is satisfied since for any $k' \not \in \{k_1, k_2\}$ we have
\begin{align*}
    d_{k'}(x) d_{k_0}(x) d_{k_1}(x) = O(n^{-3})
\end{align*}
due to Proposition~\ref{proposition: multiplicative property of degrees}, and so $d_{k'}(x) = O(n^{-2}) \ll \frac{\Delta / 2}{\ell - 2}$.
We have
\begin{align*}
    \sum_{x \in A_{k_1, k_2}} d_{k_1}(T \cup \{x\}) 
    & \le 
    2^{-|T| - 1} \sum_{x \in I_T^{k_1, k_2} \setminus I_\Delta} d_{k_1}(x) +
    2^{-|T|} \sum_{x \in A_{k_1, k_2} \setminus (I_{T}^{k_1, k_2} \cup I_\Delta)} d_{k_1}(x) +
    2^{-|T|} \sum_{x \in I_\Delta} d_{k_1}(x).
\end{align*}
Let us bound the sums separately:
\begin{align*}
    \sum_{x \in I_{T}^{k_1, k_2} \setminus I_\Delta} d_{k_1}(x)
    & \le
    |I_T^{k_1, k_2}| \left (
        \min_{x \in A_{k_1, k_2}} d_{k_1}(x) + \Delta
    \right )
    \le 
    |I_T^{k_1, k_2}| \left (
        \frac{\mSet_{k_1, k_2}}{|A_{k_1, k_2}|} +\Delta + O(n^{-2})
    \right ), \\
    \sum_{x \in A_{k_1, k_2} \setminus (I_\Delta \cup I_{T}^{k_1, k_2})} d_{k_1}(x)
    & \le (|A_{k_1,k_2}| - |I_T^{k_1, k_2}|) 
    \left ( 
        \frac{\mSet_{k_1, k_2}}{|A_{k_1, k_2}|} + \Delta + O(n^{-2})
    \right ), 
\end{align*}
where we use Claim~\ref{claim: 1 element minimum}. We also have $|I_{\Delta}| = \frac{O(\log n)}{n \cdot \Delta}$ for our particular $\Delta = \sqrt{\frac{\log n}{n^3}}$, and so
\begin{align*}
    \sum_{x \in I_\Delta} d_{k_1}(x) & = |I_\Delta| \cdot O(n^{-1}) = O \left ( \sqrt{\frac{\log n}{n}}\right ).
\end{align*}
Combining the lower bound~\eqref{eq: dk(T) upper bound final, outside degree lemma} and upper bound~\eqref{eq: dk(T) lower bound, outside degree lemma} on $d_{k_1}(T)$, we obtain
\begin{align*}
    2^{-|T|} - O(n^{-1}) & \le 2^{-|T| - 1} \frac{|I^{k_1, k_2}_T|}{|A_{k_1, k_2}|} + 2^{-|T|} \cdot \frac{|A_{k_1,k_2}| - |I_T^{k_1, k_2}| }{|A_{k_1, k_2}|} + O \left (\sqrt{\frac{\log n}{n}} \right ), \\
    \frac{1}{2} \frac{|I_T^{k_1, k_2}|}{|A_{k_1, k_2}|} & = O \left (\sqrt{\frac{\log n}{n}} \right ), \\
    |I_T^{k_1, k_2}| & = O(\sqrt{n \log n})
\end{align*}
uniformly over $T \subset \binom{\probabilisticCenter_{k_1}}{\le m}$.  
\end{proof}

\subsection{Proof of Proposition~\ref{proposition: child  can not overlap}}
\label{subsection: proposition child  can not overlap}

\begin{proof}
The proof is similar to that of Lemma~\ref{lemma: direction of tentacles}. Consider a random subset $\randomSubset$ uniformly distributed over $2^{\probabilisticCenter_{k_1}}$. Define $\varepsilon_x = \indicator[x \in \randomSubset]$. Obviously, if $\randomSubset \cup \{x_2\} \in \ff_{k_1}$, then a random variable
\begin{align*}
    \mathbf{f}_{k_0} = \sum_{S \in \binom{\probabilisticCenter_{k_1}}{\mSet_{{k_0}, k_1}}} d_{{k_0}}(S \cup \{x_2\}) \prod_{x \in S} \varepsilon_x
\end{align*}
equals zero. Hence,
\begin{align}
\label{eq: probability child can not overlap}
    \PP(\randomSubset \cup \{x_2\} \in \ff_{k_1}) \le \PP \left ( \mathbf{f}_{k_0} = 0 \right ).
\end{align}

Our first aim is to compute the mathematical expectation of $\mathbf{f}_{k_0}$. We use the following decomposition:
\vspace{0.8cm}
\begin{align}
\label{eq: d_k(x_2) bound, overlapping tentacles lemma}
    d_{k_0}(x_2) = \sum_{S \in \binom{A_{k_0, k_1}}{\mSet_{k_0, k_1}}} d_{k_0}(S \cup \{x_2\}) + \frac{1}{|\ff_{k_0}|}
    \sum_{S \in \binom{A_{{k_0}, k_1}}{\le \mSet_{{k_0}, k_1} - 1}} \left | \ff_{k_0}(S \cup \{x_2\}, \{x_2\} \cup A_{{k_0}, k_1}) \right |.
\end{align}

Corollary~\ref{corollary: outside degree} guarantees that 
\begin{align}
\label{eq: redundant sets, overlapping tentacles lemma}
    \frac{1}{|\ff_{k_0}|}
    \left | \ff_{k_0}(S \cup \{x_2\}, \{x_2\} \cup A_{{k_0}, k_1}) \right | 
    & \le  \frac{1}{|\ff_{k_0}|}  
    \left | \ff_{k_0}|_{Supp_{k_0}}(S \cup \{x_2\}, \{x_2\} \cup A_{{k_0}, k_1}) \right | \nonumber  \\
    & \qquad + \sum_{x \not \in Supp_{k_0}} d_{k_0}(S \cup \{x, x_2\})  \nonumber \\
    & = \frac{1}{|\ff_{k_0}|}
    \left | \ff_{k_0}|_{Supp_{k_0}}(S \cup \{x_2\}, \{x_2\} \cup A_{{k_0}, k_1}) \right | \nonumber \\
    & \qquad + O \left ( n^{-|S| - 3/2} \log^{1/2} n\right ).
\end{align}
Meanwhile, 
\begin{align}
\label{eq: hh_k applied bound, overlapping tentacles lemma}
    \sum_{S \in \binom{A_{{k_0}, k_1}}{\le \mSet_{{k_0}, k_1} - 1}} \left | \ff_{k_0}|_{Supp_{k_0}}(S \cup \{x_2\}, \{x_2\} \cup A_{{k_0}, k_1}) \right | 
    \le
    \sum_{S \in \binom{A_{{k_0}, k_1}}{\le \mSet_{{k_0}, k_1} - 1}} \left | \hh_{k_0}(S \cup \{x_2\}, \{x_2\} \cup A_{{k_0}, k_1}) \right |
\end{align}
since $\ff_{k_0} |_{Supp_{k_0}} \subset \hh_{k_0}$. The order of right-hand side can be easily computed by the definition~\eqref{eq: definition of hh_k} of $\hh_{k_0}$. It turns out to be $O(n^{-2}) \cdot |\hh_{k_0}|$. At the same time, we have $|\ff_{k_0} |_{Supp_{k_0}}| = (1 - O(n^{-1})) |\hh_{k_0}|$ from Claim~\ref{claim: hh_k approximation quality} and Proposition~\ref{proposition: subfamilies 1/n}. Thus, the right-hand side of~\eqref{eq: hh_k applied bound, overlapping tentacles lemma} is at most $O(n^{-2}) \cdot |\hh_{k_0}| = O(n^{-2}) \cdot |\ff_{k_0}|$. Combining inequalities~\eqref{eq: d_k(x_2) bound, overlapping tentacles lemma},~\eqref{eq: redundant sets, overlapping tentacles lemma}, we derive
\begin{align*}
    d_{k_2}(x_2) \le \sum_{S \in \binom{A_{k_0, k_1}}{\mSet_{k_0, k_1}}} d_{k_0}(S \cup \{x_2\}) + O \left (\sqrt{\frac{\log n}{n^3}} \right ).
\end{align*}
By our assumptions, $d_{k_0}(x_2) = \frac{\mSet_{k_0, k_2}}{|A_{k_0, k_2}|} + O \left (\sqrt{\frac{\log n}{n^3}} \right )$, and so 
\begin{align*}
    \sum_{S \in \binom{A_{k_0, k_1}}{\mSet_{k_0, k_1}}} d_{k_0}(S \cup \{x_2\}) \ge \frac{\mSet_{k_0, k_1}}{|A_{k_0, k_1}|} - O \left ( \sqrt{\frac{\log n}{n^3}}\right ).
\end{align*} 
Consequently,
\begin{align}
\label{eq: expectation overlapping tentacles lemma}
    \EE [\mathbf{f}_{k_0}] = \EE \left [\sum_{S \in \binom{\probabilisticCenter_{k_1}}{\mSet_{{k_0}, k_1}}} d_{{k_0}}(S \cup \{x_2\}) \prod_{x \in S} \varepsilon_x \right ] \ge 2^{- \mSet_{{k_0}, k_2}}\frac{\mSet_{{k_0}, k_2}}{|A_{{k_0}, k_2}|} - O \left (\sqrt{\frac{\log n}{n^3}} \right ) = \Omega(n^{-1}).
\end{align}
The remaining part is completely analogous to analyzing the exposure martingale in Lemma~\ref{lemma: direction of tentacles}. Define, as previously,
\begin{align*}
    Y_{w} = \EE [\mathbf{f}_{k_0} \, | \, \sigma(\varepsilon_{z_1}, \ldots, \varepsilon_{z_w})], \quad Y_0 = \EE [\mathbf{f}_{k_0}],
\end{align*}
where $z_1, \ldots, z_{|\probabilisticCenter_{k_1}|}$ are the elements of $\probabilisticCenter_{k_1}$ in some order. Define $U_w = \{z_1, \ldots, z_w\}$. The martingale difference is
\begin{align*}
    |Y_w - Y_{w - 1}| \le \left |\frac12 - \varepsilon_{z_{w}} \right | \sum_{S \in \binom{\probabilisticCenter_{k_1} \setminus \{z_w\}}{\mSet_{k_0, k_1} - 1}} d_{k_0}(S \cup \{z_w, x_2\}) 2^{-|S \setminus U_{w - 1}|} \prod_{z \in S \cap U_{w - 1}} \varepsilon_z.
\end{align*}
Due to Proposition~\ref{proposition: multiplicative property of degrees}, we have
\begin{align*}
    d_{k_0}(S \cup \{z_w, x_2\}) d_{k_1}(S \cup \{z_w\})d_{k_2}(x_2) = O(n^{-|S|-2}),
\end{align*}
and, since $S \cup \{z_w\} \subset \probabilisticCenter_{k_1}$ and $x_2 \in \probabilisticCenter_{k_2}$, we have $d_{k_0}(S \cup \{z_w, x_2\}) = O(n^{-|S| - 2})$. Thus,    $|Y_w - Y_{w-1}| = O(n^{-2})$. We have $Y_0 = \EE [\mathbf{f}_{k_0}]$ and $Y_{|\probabilisticCenter_{k_1}|} = \mathbf{f}_{k_0}$. Applying Theorem~\ref{theorem: Hoeffding's iniequality}, we obtain
\begin{align*}
    \PP \left ( \EE [\mathbf{f}_{k_0}] - \mathbf{f}_{k_0} \ge \EE [\mathbf{f}_{k_0}]\right ) 
    \le
    \exp \left (
         - \Omega(n^3) \cdot \EE^2 [\mathbf{f}_{k_0}]
    \right ).
\end{align*}
Due to~\eqref{eq: expectation overlapping tentacles lemma}, we obtain
\begin{align*}
    \PP \left ( \EE [\mathbf{f}_{k_0}] - \mathbf{f}_{k_0} \ge \EE [\mathbf{f}_{k_0}]\right ) = e^{-\Omega(n)}.
\end{align*}
Obviously, the above probability bounds~\eqref{eq: probability child can not overlap}, and, hence, $\PP (\mathbf{X} \cup \{x_2\} \in \ff_{k_1}) = e^{-\Omega(n)}$. Finally,
\begin{align*}
    |\ff_{k_1}(x_2)| \le \prod_{k \in [\ell] \setminus \{k_1\}} \binom{n}{\le \mSet_{k, k_1}} \PP (\mathbf{X} \cup \{x_2\} \in \ff_{k_1}) 2^{|\probabilisticCenter_{k_1}|}.
\end{align*}
Using $|\ff_{k_1}| \ge 2^{|\probabilisticCenter_{k_1}|}$, we infer
\begin{align*}
    d_{k_1}(x_2) \le \prod_{k \in [\ell] \setminus \{k_1\}} \binom{n}{\le \mSet_{k, k_1}} e^{-\Omega(n)} = e^{- \Omega(n)}. 
\end{align*}
\end{proof}

\end{document}